\newcommand{\sacss}{superalgebraic cartesian sets}
\newcommand{\sacs}{superalgebraic cartesian set}
\newcommand{\Sacss}{Superalgebraic cartesian sets}
\newcommand{\sacp}{superalgebraic cartesian prestack}
\newcommand{\G}{\mathbb{G}}
\newcommand{\e}{\epsilon}
\newcommand{\de}{\delta}
\newcommand{\F}{\mathbb{F}}
\newcommand{\Sect}{\mathcal{O}}
\newcommand{\cO}{\Sect}
\newcommand{\Q}{\mathbb{Q}}
\newcommand{\Z}{\mathbb{Z}}
\newcommand{\R}{\mathbb{R}}
\newcommand{\uRR}{\underline{\mathbb{R}}}
\newcommand{\A}{\mathbb{A}}
\newcommand{\cL}{\mathcal{L}}
\newcommand{\N}{\mathbb{N}}
\newcommand{\M}{\mathbb{M}}
\newcommand{\al}{\alpha}
\newcommand{\lra}[1]{\overset{#1}{\longrightarrow}}
\newcommand{\Prod}[1]{\underset{#1}{\prod}}
\newcommand{\Coprod}[1]{\underset{#1}{\coprod}}
\newcommand{\Colim}[1]{\underset{#1}{\colim}}
\newcommand{\Lim}[1]{\underset{#1}{\lim}}
\newcommand{\sF}{\mathsf{F}}
\newcommand{\cC}{\mathcal{C}}
\newcommand{\be}{\overline{\e}}
\newcommand{\bx}{\overline{x}}
\newcommand{\sA}{\mathsf{sA}} 
\def \mmod{/\mkern-3mu /}
\newcommand{\cV}{\mathcal{V}}
\newcommand{\op}{{\textrm{op}}}
\DeclareMathOperator{\uAut}{\underline{Aut}}
\DeclareMathOperator{\End}{End}
\DeclareMathOperator{\uEnd}{\underline{End}}
\DeclareMathOperator{\Bord}{Bord}
\DeclareMathOperator{\BBord}{\mathsf{Bord}}
\DeclareMathOperator{\Hom}{Hom}
\DeclareMathOperator{\uHom}{\underline{Hom}}
\DeclareMathOperator*{\colim}{colim} 
\DeclareMathOperator{\Spec}{Spec}
\DeclareMathOperator{\Mod}{Mod}
\DeclareMathOperator{\MMod}{\mathsf{Mod}}
\DeclareMathOperator{\ob}{ob}
\DeclareMathOperator{\sCart}{sCart}
\DeclareMathOperator{\usCart}{\underline{sCart}}
\DeclareMathOperator{\usMan}{\underline{sMan}}
\DeclareMathOperator{\const}{const}
\DeclareMathOperator{\codis}{codis}
\DeclareMathOperator{\Pic}{Pic}
\DeclareMathOperator{\PPic}{\mathsf{Pic}}
\DeclareMathOperator{\Pre}{Pre}
\DeclareMathOperator{\Set}{Set}
\DeclareMathOperator{\sAlg}{sAlg}
\DeclareMathOperator{\sSet}{sSet}
\DeclareMathOperator{\Vect}{Vect}
\newcommand{\pTFT}[1]  
{
  \ifthenelse{\equal{#1}{}}{  			
		{\mathrm{0|1}\textrm{-}\mathrm{pTFT}} 
	}{									
		{\mathrm{#1}\textrm{-}\mathrm{pTFT} } 
	}
}
\newcommand{\TFT}[1]  
{
  \ifthenelse{\equal{#1}{}}{  			
		{\mathrm{0|1}\textrm{-}\mathrm{TFT}} 
	}{									
		{\mathrm{#1}\textrm{-}\mathrm{TFT} } 
	}
}
\newcommand{\EFT}[1]  
{
  \ifthenelse{\equal{#1}{}}{  			
		{\mathrm{0|1}\textrm{-}\mathrm{EFT}} 
	}{									
		{\mathrm{#1}\textrm{-}\mathrm{EFT} } 
	}
}
\newcommand{\QFT}[1]  
{
  \ifthenelse{\equal{#1}{}}{  			
		{\mathrm{0|1}\textrm{-}\mathrm{QFT}} 
	}{									
		{\mathrm{#1}\textrm{-}\mathrm{QFT} } 
	}
}
\theoremstyle{plain}
\newtheorem{thm}{Theorem}[section]
\newtheorem*{thm*}{Theorem}
\newtheorem{prop}[thm]{Proposition}
\newtheorem{cor}[thm]{Corollary}
\newtheorem{cor-def}[thm]{Corollary/Definition}
\newtheorem{lemma}[thm]{Lemma}
\theoremstyle{definition} 
\newtheorem{definition}[thm]{Definition}
\newtheorem{example}[thm]{Example}
\newtheorem{remark}[thm]{Remark}
\newtheorem{question}[thm]{Question}
\definecolor{CSPcolor}{rgb}{0.0,0.5,0.75}	
\definecolor{NScolor}{rgb}{0.0,0.6,0.1}		
\newcommand{\CSP}[1]{\marginpar{\vspace*{-20pt}\tiny\color{CSPcolor}{ #1}\vspace*{20pt}}}
\newcommand{\NS}[1]{\marginpar{\vspace*{-20pt}\tiny\color{NScolor}{ #1}\vspace*{20pt}}}
\newcommand{\CSPcomm}[1]{{\color{CSPcolor}{#1}}}
\newcommand{\NScomm}[1]{{\color{NScolor}{#1}}}
\begin{document}

	\title{Singular Cohomology  from Supersymmetric Field Theories}

	\author{Christopher Schommer-Pries}
	\address{University of Notre Dame, Department of Mathematics
255 Hurley, Notre Dame, IN 46556}
	\email{cschomme@nd.edu}

	\author{
	Nathaniel Stapleton
	}
	\email{nstapleton@math.mit.edu}
	\thanks{The second author was partially supported by NSF grant DMS-0943787.}
	\address{}

	\date{\today}

\begin{abstract}
	We show that Sullivan's model of rational differential forms on a simplicial set $X$ may be interpreted as a (kind of) $0|1$-dimensional supersymmetric quantum field theory over $X$, and, as a consequence, concordance classes of such theories represent the rational cohomology of $X$. We introduce the notion of {\em \sacss}, a concept of space which should roughly be thought of as a blend of simplicial sets and generalized supermanifolds, but valid over an arbitrary base ring. Every simplicial set gives rise to a \sacs~and so we can formulate the notion of $0|1$-dimensional supersymmetric quantum field theory over $X$, entirely within the language of such spaces. We explore several variations in the kind of field theory and discuss their cohomological interpretations.
	
	Finally, utilizing a theorem of Cartan-Miller, we describe a variant of our theory which is valid over any ring $S$ and allows one to recover the $S$-cohomology $H^*(X;S)$ additively and with multiples of the cup product structure.  	
\end{abstract}

\maketitle


\tableofcontents

\section*{Introduction}

Cohomology theories such as real cohomology, $K$-theory, and cobordism theories have the distinct advantage of a geometric description. They are built out of geometric cochains such as differential forms, vector bundles, or cobordism classes of manifolds. This significantly aids our ability to compute with these theories while also allowing methods from algebraic topology to be used to solve geometric problems.

Chromatic homotopy theory organizes cohomology theories according to their height, which is a measure of the complexity of the theory. Real cohomology and $K$-theory are at heights $0$ and $1$, respectively. The theory of {\em topological modular forms} $TMF$ introduced by Hopkins and Miller is of height $2$, while there are numerous theories, such as Morava $E_n$-theory and $K(n)$-theory, which exist for arbitrary heights $n$. 

In contrast to real cohomology and $K$-theory, there are no known geometric descriptions of these latter theories. In fact, aside from bordism theories (which are manifestly geometric), to our knowledge the only known geometric construction of a cohomology theory of complexity greater than K-theory is via the Baas-Dundas-Richter-Rognes theory of `2-vector bundles' \cite{MR3010546, MR2832571}; it produces $K(ku)$, the algebraic K-theory of topological K-theory, a theory of telescopic complexity two.  

Nevertheless, several years ago the enticing idea was put forward that quantum field theories could provide some of the best candidates for geometric cochains for higher height cohomology theories. This idea was pioneered by Graeme Segal \cite{MR992209} who proposed to use 2-dimensional conformal field theories to give geometric cocycles for elliptic cohomology. This idea has been further developed in the work of Stolz-Teichner \cite{MR2079378, Stolz-Teichner-Survery2}.

While the primary goal of the Stolz-Teichner program has  been to use quantum field theories to construct a geometric model of $TMF$, a goal which has not yet been fully realized, as an offshoot they have been very successful in constructing new geometric models of K-theory and de Rham cohomology based entirely on the formalism of quantum field theory. See \cite{MR2763085} for the latter case. 

There are two categories which go into the Atiyah-Segal formulation of quantum field theories.
\begin{itemize}
	\item A symmetric monoidal category (or more generally $n$-category) $\Bord$ of bordisms. Here the objects are manifolds (say of dimension $d-1$) and the morphisms are isomorphism classes of bordisms between these. In the context relevant to cohomology theories these manifolds will typically be equipped with some geometric structure such as metrics or conformal structures, though the purely topological case is also of interest. 
	\item A target symmetric monoidal category $\cV$. This is often the category $\Vect$ of vector spaces (or Hilbert spaces). In higher categorical contexts a suitable higher categorical analog of vector spaces should be used. 
\end{itemize}
A quantum field theory is then defined to be a symmetric monoidal functor:
\begin{equation*}
	Z: \Bord \to \cV.
\end{equation*}

When there is geometry involved the set of all choices of that geometry (on a given bordism) will form a kind of `space', and our quantum field theory should restrict to give a function (continuous, smooth, holomorphic, etc.) on that space. In certain degenerate cases these `spaces' will actually themselves be represented by manifolds, but more generally we will need to use `generalized manifolds' (i.e. concrete sheaves) or stacks. 

It is important that quantum field theories respect this structure. One way to accomplish this (following \cite[\S2]{Stolz-Teichner-Survery2}) is to regard $\Bord$ as an internal category, internal to stacks or generalized manifolds. The target category $\cV$ will be of the same kind and our field theory is required to be an internal functor. 

There are several other key ideas which play a role in the Stolz-Teichner program. One of them is the use of {\em supersymmetric} quantum field theories. The theory of supermanifolds and resulting supergeometry are used extensively in their work.  Another key idea is that it is possible to form {\em twisted field theories}, and in particular field theories of a fixed {\em degree} $n \in \Z$. A third ingredient is that it is possible to consider field theories {\em over a (super) manifold} $X$, in which the relevant cobordisms are equipped with maps to $X$. This will be (contravariantly) functorial in $X$ and hence one obtains a series of (pointed) presheaves:
\begin{equation*}
	X \mapsto \QFT{}^n(X)
\end{equation*} 
Here $\QFT{}^n(X)$ denotes the set of isomorphism classes of degree $n$ quantum field theories over $X$.
By varying the dimension of the bordisms, the geometry, and the target category one obtains a plethora of varieties of quantum field theories. Its flexibility is part of the appeal of this subject.

Two quantum field theories over $X$ are defined to be {\em concordant} if there exists a quantum field theory over $X \times \R$ which restricts to the two given fields theories on $X \times \{i\}$, $i=0,1$. Concordance induces an equivalence relation, and we denote the set of concordance classes of quantum field theories over $X$ by $\QFT{}^n[X]$. It is automatically homotopy invariant. In very favorable situations this construction yields a cohomology theory;
 this is the case for de Rham cohomology \cite{MR2763085}, K-theory \cite{stolz-teichner-unpublished}, Tate K-theory \cite{Cheung:2008aa}, and complexified $TMF$ \cite{Berwick-Evans:2013aa}.
 In the current work we build on these ideas. We were particularly influenced by the results of  Hohnhold-Kreck-Stolz-Teichner \cite{MR2763085}. 

The first major departure from previous results is a move away from (generalized) supermanifolds. In section \ref{sec:sacs} we introduce the notion of {\em \sacss}. 
One way to view manifolds, and also more exotic `generalized manifolds', is as certain sheaves on the category of smooth cartesian spaces, i.e. the category with objects $\R^n$ for $n \in \N_{\geq 0}$ and morphisms $\hom(\R^n, \R^m)$ the set of smooth maps from $\R^n$ to $\R^m$. Similarly generalized supermanifolds may be viewed as certain sheaves on the category of smooth supercartesian spaces $\R^{n|q}$. 

\Sacss~are defined analogously but with the following changes:
\begin{itemize}
	\item We drop the sheaf requirement, allowing ourselves to consider arbitrary presheaves (and indeed arbitrary prestacks);
	\item Instead of all smooth maps between $\R^{n|q}$ and $\R^{m|p}$, we restrict to functions which are polynomials in the standard coordinates;
	\item We allows these polynomials to be defined over an arbitrary base ring. 
\end{itemize}
 Consequently we find it more appropriate to denote the representable superalgebraic cartesian sets as $\A^{n|q}$. The term `\sacs' is supposed to remind us that this notion of space is based on the polynomial algebra over an arbitrary ring, while also being evocative of the term `simplicial set'. Indeed any simplicial set has an {\em algebraic realization} as a \sacs, and any \sacs~has a corresponding singular simplicial set (see Section~\ref{sec:sacs}). They also have several aspects reminiscent of schemes in algebraic geometry, though the theory of \sacss~is more simplistic. Everything we do is functorial in the base ring. 
 
Given this new notion of space, we may then mimic the usual definition of quantum field theory. In this paper we will focus on the simplest species of supersymmetric quantum field theories, those of superdimension $0|1$. The bordisms in this case consist of finite disjoint unions of the representable {\em superpoint } $\A^{0|1}$. 

A second departure from previous work is that instead of working over a supermanifold, we define these quantum field theories over an arbitrary simplicial set. We consider a variety of geometries on the superpoint, each of which gives rise to a notion of supersymmetric ${0|1}$-dimensional quantum field theory. We classify the possible global twists for these theories. In each case there is always a {\em degree $n$ twist} where $n \in \N$ now takes values in the natural numbers. 

When the base ring is the field $\Q$ of rational numbers, the supersymmetric ${0|1}$-dimensional quantum field theories over a simplcial set~$X$ have a familiar interpretation. They coincide precisely with Sullivan's model of rational polynomial differential forms on $X$ \cite{Sull}. More precisely, the most interesting geometries we consider are: fully-rigid, Euclidean, and topological (no geometry). In these cases we obtain the following result:

\begin{thm*}\label{thm:mainthm}
	Let $R$ be a rational algebra, and consider the category of \sacss~defined over $R$. Let $X$ be a simplicial set, regarded as a \sacs. Then:
	\begin{enumerate}
		\item For each of the following geometries the set of supersymmetric ${0|1}$-dimensional quantum field theories of degree $n$ over $X$ may be identified as: 
		\begin{enumerate}
			\item (topological) closed degree $n$ polynomial forms over $R$
			\begin{equation*}		
				\TFT{}^n(X) \cong \Omega^n_{R; cl}(X);
			\end{equation*}
			\item (Euclidean) closed periodic polynomial forms over $R$
			\begin{equation*}
				\EFT{}^n(X) = \begin{cases}
					\Omega^\textrm{ev}_{R; cl}(X) & n \textrm{ even} \\
					\Omega^\textrm{odd}_{R; cl}(X) & n \textrm{ odd}
				\end{cases}
			\end{equation*}
			\item (fully-rigid) all polynomial forms over $R$ 
\[
\QFT{}_\textrm{f-r}(X) \cong \Omega^*_{R}(X).
\]
		\end{enumerate}
		\item For each of the following geometries the set of concordance classes of supersymmetric ${0|1}$-dimensional quantum field theories of degree $n$ over $X$ may be identified as: 
		\begin{enumerate}
			\item (topological) $\TFT{}^n[X] \cong HR^n(X)$  degree $n$ $R$-cohomology;
			\item (Euclidean) $\EFT{}^n[X]\cong PHR^n(X)$ periodic $R$-cohomology.
		\end{enumerate}
	\end{enumerate}
Moreover in the case of fully-rigid geometry the natural symmetries of the the supersymmetric quantum field theory recover the commutative differential graded algebra structure on  $\Omega^*_{R}(X)$.	  
\end{thm*}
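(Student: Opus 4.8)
The plan is to reduce every $0|1$-dimensional supersymmetric quantum field theory over $X$ to a single ``partition function''-type datum on the internal mapping object $\uHom(\A^{0|1},X)$, and then to compute that object together with its natural symmetries. The first step is to unwind the internal-functor formalism in the world of \sacps. The $0|1$-dimensional bordism prestack is built, under disjoint union and composition, from the single geometrized object $\A^{0|1}$ decorated by a map to $X$; hence a symmetric monoidal internal functor $Z$ to (super) $R$-modules is pinned down by its value on a point together with the section it assigns over $\uHom(\A^{0|1},X)$ of the line cut out by the chosen geometry and twist, all other data being forced by the monoidal and composition constraints. This should produce, for each geometry $\mathcal G$ and twist, a natural bijection between $\mathcal G$-field-theories over $X$ and $\uAut_{\mathcal G}(\A^{0|1})$-invariant sections of the twisting line over $\uHom(\A^{0|1},X)$.

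The second step identifies $\uHom(\A^{0|1},X)$ and its symmetries. Over $R$ the functions on the superpoint are $R[\theta]/(\theta^2)$; an $S$-point of $\uHom(\A^{0|1},X)$ for $X$ represented by $\A^{n|q}$ is an $(n|q)$-tuple of suitably even/odd functions on $S\times\A^{0|1}$, and a coordinate computation identifies $\uHom(\A^{0|1},\A^{n|q})$ with the odd tangent bundle $T[1]\A^{n|q}$. Passing to the algebraic realization of a general simplicial set $X$ then gives $\cO\bigl(\uHom(\A^{0|1},X)\bigr)\cong\Omega^*_R(X)$, Sullivan's polynomial de Rham algebra (the algebraic avatar of the classical $\Map(\R^{0|1},M)\cong T[1]M$, $\cO(T[1]M)\cong\Omega^*(M)$). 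Two natural group actions survive: scaling of the coordinate $\theta$ by $\G_m$ induces the grading, under which a $p$-form has weight $p$; and the odd translation action of the additive group $\A^{0|1}$ induces the de Rham differential $d$, with $d^2=0$ reflecting that the translation group is $0|1$-dimensional. One then computes $\uAut_{\mathcal G}(\A^{0|1})$ for each geometry: fully-rigid geometry admits no automorphisms; Euclidean geometry admits $\Z/2\ltimes\A^{0|1}$ (the sign $\theta\mapsto-\theta$ together with odd translations); topological geometry admits the full $\G_m\ltimes\A^{0|1}$. The global twists correspond to characters of these structure groups (invertible objects on their classifying prestacks), which in each case include a degree-$n$ family, $n\in\N$, given by the $n$-th power of the basic scaling, resp.\ sign, character. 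Combining: a degree-$n$ topological theory is a $d$-closed section of weight $n$, i.e.\ an element of $\Omega^n_{R;cl}(X)$; a degree-$n$ Euclidean theory is a $d$-closed section of parity $n\bmod 2$, i.e.\ an element of $\Omega^{\mathrm{ev}}_{R;cl}(X)$ or $\Omega^{\mathrm{odd}}_{R;cl}(X)$; and a fully-rigid theory is an arbitrary element of $\Omega^*_R(X)$. This gives part (1), and also the final assertion: in the fully-rigid case the identification with $\Omega^*_R(X)$ is a priori only of sets, but the canonical action of $\G_m\ltimes\A^{0|1}$ on the source superpoint still acts on $\uHom(\A^{0|1},X)\cong T[1]X$, hence on the space of fully-rigid theories, supplying the grading and the odd square-zero derivation $d$, while the $R$-algebra structure on functions supplies a product that is graded-commutative because $\theta$ anticommutes with itself; these assemble into the CDGA structure on $\Omega^*_R(X)$.

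For part (2) I would combine the $\A^1$-invariance of Sullivan forms with an explicit Poincar\'e-lemma argument. A concordance between $d$-closed forms $\omega_0,\omega_1\in\Omega^n_{R;cl}(X)$ is a $d$-closed form on $X\times\A^1$ restricting to $\omega_i$ at $\{i\}$; since the two endpoint restriction maps $\Omega^*_R(X\times\A^1)\to\Omega^*_R(X)$ agree on cohomology -- both are inverse to the quasi-isomorphism induced by projection, which uses $\Q\subseteq R$ via the acyclicity of $(R[t]\otimes\Lambda(dt),d)$ in positive degrees -- concordant $d$-closed forms are cohomologous. Conversely, if $\omega_1-\omega_0=d\eta$ then $\omega_0+d(t\eta)$ is an explicit concordance over $X\times\A^1$. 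Hence $\TFT{}^n[X]\cong H^n\bigl(\Omega^*_R(X)\bigr)$, which is $HR^n(X)$ by Sullivan's polynomial de Rham theorem (valid because $R$ is a rational algebra); the Euclidean case is the same argument after collapsing the grading modulo $2$, producing periodic $R$-cohomology $PHR^n(X)$.

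The main obstacle is the first step: assembling the internalized bordism prestack and target in \sacps~carefully enough that ``$0|1$-dimensional supersymmetric field theory over $X$'' becomes literally synonymous with ``invariant section on $\uHom(\A^{0|1},X)$''. This requires fixing the right notion of geometry on the superpoint in each of the three cases, verifying that the $0|1$-bordism prestack is generated as claimed with no hidden morphisms or relations and with the correct behavior of the map-to-$X$ decoration, and working out the character/Picard theory of the structure groups $\uAut_{\mathcal G}(\A^{0|1})$ to classify the twists. Once this framework is in place, the rest is the computation $\cO(\uHom(\A^{0|1},X))\cong\Omega^*_R(X)$ supplied by Section~\ref{sec:sacs}, the elementary group theory of $\G_m\ltimes\A^{0|1}$ and its subgroups, and an appeal to Sullivan's theorem.
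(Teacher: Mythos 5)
Your proposal is correct and follows essentially the same route as the paper: exhibit $\Bord^{0|1}_{(\M,X)}$ as the free commutative monoid on $\usCart(\A^{0|1},X)/\M$ so that field theories become $\M$-invariant functions, identify $\cO(\usCart(\A^{0|1},i_!X))$ with Sullivan's polynomial forms with the $\A^{0|1}\rtimes\A^1$-action supplying the grading and differential, take (co)invariants for each geometry and degree twist, and deduce part (2) from the explicit concordance $\omega_0+d(t\eta)$ together with Sullivan's polynomial de Rham theorem over a $\Q$-algebra. The technical points you flag as remaining obstacles (the Picard/character classification of twists, and the finiteness hypotheses needed for the action on the mapping object to induce a genuine coaction on functions) are exactly the ones the paper resolves in Sections~\ref{sec:scommalg-in-sacs}, \ref{sec:endos}, and \ref{sec:twists}.
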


For rings $S$ which are \emph{not} rational we have a useful variant of the above theory, inspired by the theorems of Cartan-Miller \cite{Cartan1976, miller1978derham}. In this variant the base ring is taken to be $R = \Gamma_S(t)$, the free divided powers $S$-algebra on a single generator $t$. The functions on $\A^{n|q}$ are no longer the polynomial algebra over $R$, but are further enhanced with divided powers. As observed by Cartan and Miller, this is enough to define an integration map from forms to simplcial cochains over $R$, and a slightly weaker version of Sullivan's theorem holds as well. In the language of field theories we have

\begin{thm*}
	Let $S$ be any ring and consider the category of \sacss~with divided powers defined over $R = \Gamma_S(t)$. Let $X$ be a simplicial set, regarded as a one of these spaces. Then `integration' gives a natural isomorphism 
	\begin{equation*}
		TFT^*[X] \cong  \oplus_p H^p(X; \Gamma^{\geq p}_S(t)) \subseteq H^p(X; \Gamma_S(t))
	\end{equation*}
	between the concordance classes of supersymmetric ${0|1}$-dimensional topological quantum field theories over $X$ and the specified subring of the cohomology   $H^p(X; \Gamma_S(t))$.
\end{thm*}

We refer the reader to Section~\ref{sec:cohomology} for full details, but note that since $\Gamma_S(t)$ is flat over $S$,
\begin{equation*}
	H^p(X; \Gamma_S(t)) \cong H^p(X; S) \otimes_S \Gamma_S(t) \cong \oplus_\infty H^p(X; S)
\end{equation*}
is simply a direct sum of countably many copies of $H^*(X; S)$. In particular topological quantum field theories encode the $S$-cohomology of $X$ for any ring $S$.

\subsection*{Further motivations}

One tool that aids in the study of higher height cohomology theories is a form of character theory \cite{hkr, tgcm}. It provides a character map that approximates high height cohomology theories by a form of rational cohomology. The form of rational cohomology has coefficients that are a ring extension of the rationalization of the coefficients of the high height cohomology theory. These rings are often algebras over the p-adic rationals.

Many features of these character maps are reminiscent of {\em dimensional reduction} maps between field theories. In fact there is a quantum field theoretic interpretation of the (Bismut) Chern character map which arises precisely as a dimensional reduction \cite{Han:2007aa}. This geometric construction yields a character map from K-theory taking values in periodic de Rham cohomology. 

Periodic de Rham cohomology cannot be a suitable target for the higher height character maps that take place at a prime $p$. This is essentially because there is no (interesting) map from the real numbers $\R$ to the $p$-adic rationals $\Q_p$. For example the $p$-adic Chern character may be obtained as the completion of the ordinary Chern character, but only once it is factored through periodic {\em rational} cohomology. 

This project grew out of a desire to explore the relationship between higher character theory and quantum field theory, which remains an ongoing project. This paper achieves a crucial first step, which is to construct a geometric and quantum field theoretic construction of the cohomology theories which serve as targets of these higher character maps.



\subsection*[SUSY field theories and de Rham cohomology]{Review of the literature}

In this section we give a rapid summary of the work of Hohnhold-Kreck-Stolz-Teichner \cite{MR2763085} relating smooth differential forms to $0|1$-dimensional supersymmetric quantum field theories. This material serves as a conceptual blueprint for the theory developed in later sections.

The Atiyah-Segal axioms define quantum field theories as symmetric monoidal functors from a bordism category, $\Bord$, to a target symmetric monoidal category $\cV$, such as the category of Hilbert spaces. This can be generalized in many ways. One important way is that the theory can be made to satisfy an enhanced form of locality by replacing ordinary 1-categories with $d$-categories. Thus $\Bord^d$ is to be a symmetric monoidal $d$-category. The objects are to be 0-dimensional bordisms, the morphisms are 1-dimensional bordisms, the 2-morphisms are 2-dimensional bordisms between bordisms, etc. all the way up through dimension $d$. A fully local quantum field theory is then a symmetric monoidal functor from this symmetric monoidal $d$-category $\Bord^d$ to another target symmetric monoidal $d$-category $\cV$. 

The work of HKST \cite{MR2763085} considers a degenerate case where $d=0$. Thus the bordism $n$-category becomes a symmetric monoidal 0-category. That is to say it becomes a commutative monoid. The target category $\cV$ will also be replaced by a commutative monoid, and field theories become commutative monoid homomorphisms. 

Furthermore, these theories are supersymmetric. We refer the reader to \cite{MR1701597} for the necessary background material on supermanifolds. In this case the bordisms are (closed) compact\footnote{A supermanifold will be considered compact if the underlying reduced manifold is compact.} supermanifolds of dimension~$0|1$. Each such bordism is a finite disjoint union of copies of the {\em superpoint} $\R^{0|1}$. Each of these bordisms could also be equipped with some kind of geometry which defines the kind of quantum field theory, and HKST consider two possibilities: topological and Euclidean. For simplicity in the remainder of this section we will focus on the topological case (with no geometry).
However even in the topological case we will equip the bordisms with a further structure.
For each supermanifold $X$, HKST consider a category of bordisms {\em over $X$} where each $0|1$-dimensional bordism is equipped with the structure of a map to $X$.  

Finally, as mentioned in the introduction, the bordism category  $\Bord_X^{0|1}$ is constructed internally to the category of stacks on the Grothendieck site of supermanifolds. In this case this means that $\Bord_X^{0|1}$ is a commutative monoid object in stacks on supermanifolds. The easiest way to describe this object is using the {\em $S$-point formalism} (which is essentially the same as the fibered category approach to stacks (See \cite{MR2223406} for an excellent introdution to fibered categories and stacks)). 

Given an arbitrary test supermanifold $S$, we will describe the symmetric monoidal groupoid of maps from $S$ into  $\Bord_X^{0|1}$. Its objects consist of {\em $S$-families} of $0|1$-dimensional bordisms equipped with a map to $X$. More precisely the objects consist of a pair $(E,f)$ where $E$ is a bundle of ${0|1}$-dimensional supermanifolds over $S$ equipped with a map $f:E \to X$ from the total space to $X$. There is an obvious notion of automorphism making this a groupoid, and the symmetric monoidal structure is given by fiberwise disjoint union over $S$.

The definition of quantum field theory is incomplete without the target category, which will be a 0-category analog of the category of vector spaces. In this case HKST use the categorical looping of the category of vector spaces, which is the representable stack $\R$ (endomorphisms of the unit vector space). To indicate that we are thinking of $\R$ as a representable stack we will write it with an underline $\uRR$. Multiplication makes $\uRR$ into a commutative monoid object in supermanifolds (and hence also in stacks on supermanifolds), and $0|1$-dimensional topological quantum field theories over $X$ are the defined \cite[Def.~5.1]{MR2763085} to be the set
\begin{equation*}
	\TFT{}(X) = \Hom(\Bord_X^{0|1}, \uRR)
\end{equation*}
of homomorphisms of commutative monoids in stacks over supermanifolds.

As a commutative monoid $\Bord_X^{0|1}$ is freely generated by the stack quotient 
\begin{equation*}
	\usMan(\R^{0|1}, X) \mmod \uAut(\R^{0|1}).
\end{equation*}
Here $\usMan(\R^{0|1}, X)$ is the internal mapping object from $\R^{0|1}$ into $X$, and $\uAut(\R^{0|1})$ is the internal automorphism object; an $S$-point of $\usMan(\R^{0|1}, X)$ is a map $S \times \R^{0|1} \to X$ and the effect of taking the stack quotient is that, locally in $S$, we may glue these trivial $\R^{0|1}$-bundles together to form non-trivial bundles. 
Both $\usMan(\R^{0|1}, X)$ and $\uAut(\R^{0|1})$ turn out to be representable by supermanifolds. The former is given by
\begin{equation*}
	\usMan(\R^{0|1}, X) \cong \pi TX
\end{equation*}
and the latter is a super Lie group $\R^{\times} \ltimes \R^{0|1}$ \cite[Prop.~3.1 and Lma.~3.5]{MR2763085}. The supermanifold $\pi TX$ has the surprising property that its algebra of functions is the superalgebra of differential forms on $X$.  

With this description of the bordism category, it is straightforward to calculate the topological field theories explicitly. Since the commutative monoid $\Bord_X^{0|1}$ is freely generated by $\pi TX \mmod \uAut(\R^{0|1})$, the commutative monoid homomorphisms from $\Bord_X^{0|1}$ into $\uRR$ are exactly the same as the maps from $\pi TX \mmod \uAut(\R^{0|1})$ into $\uRR$. These in turn may be identified with the even functions on $\pi TX$ (i.e. even differential forms) which are $\uAut(\R^{0|1})$-invariant. These are exactly the locally constant functions on $X$ (i.e. the closed degree zero differential forms on $X$) \cite[Prop.~5.5]{MR2763085}.   

{\em Twisted} quantum field theories generalize the quantum field theories just described. For a given geometry there is a symmetric monoidal category of twists, and for each twist, $\tau$, there is a corresponding notion of $\tau$-twisted quantum field theory over $X$. Let $\QFT{}^\tau(X)$ denote the set of these. In the above situation there is a natural family of {\em degree $n$ twists} parametrized by the integers (with the untwisted case corresponding to degree zero). 

A similar calculation to the above \cite[Prop.~6.3]{MR2763085} yields
\begin{equation*}
	\TFT{}^n(X) \cong \Omega^n_\textrm{cl}(X),
\end{equation*}    
that is $0|1$-dimensional degree $n$ supersymmetric topological field theories over $X$ are in natural bijection with closed smooth differential forms of degree $n$ on $X$. As a consequence concordance classes of these degree $n$ topological field theories are in bijection with de Rham cohomology classes on $X$. More generally the category of twists depends on the manifold $X$ and twisted quantum field theories give a model of twisted cohomology \cite{SST}.  
We will make the definition of twist more precise in our specific context in Section~\ref{sec:SQFT2}. 

\subsection*{Outline of the paper}



In Section \ref{sec:sacs} we define the category of \sacss. The category is a presheaf topos and we develop basic properties of the category from that perspective. The category has a
distinguished supercommutative algebra object $\cO$. In Section \ref{sec:scommalg-in-sacs} we study the Picard category of invertible modules for $\cO$. This is important when studying twisted field theories in Section \ref{sec:twists}. 

In Sections  \ref{sec:tiny} and \ref{sec:endos} we study the mapping space from the superpoint into a \sacs~and show that under certain conditions the action of the endomorphisms of the superpoint on the mapping space produces a cdga structure. In Section  
\ref{sec:forms} we examine more closely the case where the \sacs~comes from a simplicial set $X$, we show that the ring of functions on this mapping \sacs~is precisely Sullivans rational differential forms on $X$, and that the endomorphisms of the superpoint reproduce the grading and differential on Sullivan's rational differential forms. 

Section \ref{sec:geometries} explores the structure induced by submonoids of the endomorphisms of the superpoint. These are called geometries. In Section \ref{sec:SQFT2} we define and study $0|1$-dimensional supersymmetric quantum field theories in the context of \sacss. In analogy to the smooth setting, we define a bordism (0-)category over an arbitrary \sacs~$X$. The bordisms in this case consist of finite disjoint unions of copies of the superpoint $\A^{0|1}$ and they are equipped with maps to the \sacs~$X$. For each geometry we describe the collection of $0|1$-dimensional supersymmetric quantum field theories over $X$ in terms of Sullivan's rational differential forms. In Section \ref{sec:twists} we define twisted field theories and describe the twisted field theories in terms of rational differential forms. Various natural notions of concordance are defined in Section \ref{sec:concordance} and we show that they are all equivalent. This gives the main theorem. In Section~\ref{sec:cohomology} we describe a variant which recovers the Cartan-Miller theory of divided power differential forms over an arbitrary (possibly non-rational) ring.

\subsection*{Acknowledgments}
We would like to thank Peter Teichner for several useful conversations and for suggesting that we look at Sullivan's work as a geometric model of rational cohomology. We would also like to thank Martin Olbermann for highlighting an important difference between the linear and non-linear twists. We would like to thank Gerd Laures for pointing us to the work of Cartan. We would like to thank the Max Planck Institute for Mathematics in Bonn for their generous hospitality; this work was carried out at the MPIM.

\section{Superalgebraic Cartesian Sets} \label{sec:sacs}

By a $\Z/2$-graded commutative ring we will mean a $\Z/2$-graded ring which satisfies the commutativity condition 
$x y = (-1)^{|x| \cdot |y|} y x$ for all homogeneous elements $x$ and $y$. Let $\sAlg$ be the category of $\Z/2$-graded commutative rings and grading preserving homomorphisms. We will refer to objects in this category as supercommutative rings. 

Given a commutative ring $R$ we may form a supercommutative ring denoted $R[x_1,\ldots,x_n,\e_1,\ldots,\e_q]$, where we implicitly assume that the variables $x_i$ refer to even generators and the variables $\e_j$ refer to odd generators. This ring is the tensor product over $R$ of a polynomial ring on even variables $x_1, \ldots, x_n$ with an exterior algebra on odd variables $\e_1, \ldots, e_q$. 

To be completely explicit, this means that $R[x_1,\ldots,x_n,\e_1,\ldots,\e_q]$ is presented as 
\begin{equation*}
	R[x_1, \ldots, x_n]\langle \e_1,\ldots,\e_q \rangle / (\e_i^2, \e_i \e_j + \e_j \e_i, i, j = 1, \ldots,q),
\end{equation*}
the quotient of the free polynomial ring on the generators $x_i$ adjoined non-commutative generators $\e_j$ by the two-sided ideal generated by the squares $\e_j^2$ and the supercommutators $\e_i \e_j + \e_j \e_i$. Since this ideal is generated by even degree elements, the result is still a $\Z/2$-graded ring and is manifestly graded commutative. This formulation is important when 2 is not a unit (such as in characteristic 2) where the relation $\e^2 = 0$ for odd elements $\e$ \emph{does not follow} from the super commutation relation alone. If 2 is invertible, then the square terms are already contained in the two-sided ideal generated by the supercommutators $\e_i \e_j + \e_j \e_i$. 

\begin{remark}
Here and throughout the paper the degree of anything called $\e$ or $\delta$ will be odd. Thus these are square zero elements of the supercommutative ring. 
\end{remark}

In this section we introduce \sacss. These are a species of space which are a primordial mixture of the concepts of supermanifold, (super)scheme, and simplicial set. While everything we will explain in this section is super (i.e. $\Z/2$-graded commutative), one could just as well form an ungraded analogue called algebraic cartesian sets.

\begin{definition}
Fix a commutative ring $R$. The superalgebraic cartesian category $\sA$ has objects $\A^{n|q}_{R}$ for $n,q \in \mathbb{N}$ and morphisms the polynomial maps 
\[
\sA(\A^{n|q}_{R}, \A^{m|p}_{R}) = \sAlg^{\text{op}}(R[x_1,\ldots,x_n,\e_1,\ldots,\e_q],R[x_1,\ldots,x_m,\e_1,\ldots,\e_p]).
\]
\end{definition}

\noindent Hence $\sA$ is a full subcategory of the opposite of the category of supercommutative $R$-algebras. 

\begin{definition}
The category of \sacss~is the category of presheaves $\sCart := \Pre(\sA)$. A \sacs~is an object of $\sCart$.
\end{definition}

\begin{example}
	We will often abuse notation and write $\A^{n|q}$ instead of $\A^{n|q}_{R}$ for the {\em representable}	\sacss, however, everything that we do will be functorial in the ring $R$.  
	Note that $\A^{n|q} \cong (\A^1)^{n}\times(\A^{0|1})^{q}$.
The {\em superpoint} is the \sacs~$\A^{0|1}$.
\end{example}

\subsection[as a presheaf topos]{Superalgebraic cartesian sets as a presheaf topos} \label{subsec:topos}

The category of \sacss~is, by definition, a presheaf topos and consequently it enjoys the nicest possible categorical properties. 

\begin{example} \label{ex:innerhom}
The category $\sCart$ is cartesian closed.
 The categorical product of two \sacss~$X$ and $Y$ is computed pointwise, and for each \sacs~$X$,
 the right adjoint to $X \times (-)$ is given by the internal mapping functor $\underline{\sCart}(X, -)$.  
 The internal mapping \sacs~is given as the presheaf
\[
\underline{\sCart}(X,Y): \sCart \lra{} \Set
\]
mapping
\[
\A^{n|q} \mapsto \sCart(\A^{n|q} \times X, Y).
\]
\end{example}

The category of \sacss~is complete and cocomplete with both limits and colimits computed pointwise
\[
(\colim X_\alpha)(\A^{m|p}) = \colim (X_\alpha(\A^{m|p})) 
\]
and 
\[
(\lim X_\alpha)(\A^{m|p}) = \lim (X_\alpha(\A^{m|p})). 
\]
As a topos, \sacss~are also a context in which to carryout mathematics. We can almost effortlessly study the theories of groups, monoids, commutative rings, modules, categories, and even supercommutative rings, internally to \sacss. 

\begin{example}\label{example:the_ring_opbject_O}
		There is an important supercommutative algebra object $\cO \in \sCart$. As a \sacs~we have $\cO = \A^{1|1}$. Addition is given by 	
	\[
	R[x,\e] \lra{} R[x_1,x_2,\e_1,\e_2]:(x \mapsto x_1+x_2, \e \mapsto \e_1 + \e_2)
	\]
	and multiplication is given by
	\[
	R[x,\e] \lra{} R[x_1,x_2,\e_1,\e_2]: (x \mapsto x_1x_2+\e_1\e_2, \e \mapsto x_1\e_2+x_2\e_1),
	\]
	where we have used the embedding of $\sA$ into $\sAlg^{\text{op}}$ to write down these maps.
\end{example}

Every topos has a {\em global sections} functor $\Gamma$ which is given by evaluation on the terminal object. In the language of topos theory this is a geometric morphism to the terminal topos, the category of sets. In the case at hand, we have even more structure. Since the category $\sA$ has all finite products the category of \sacss~is a {\em cohesive topos} \cite{MR2125786, MR2369017} (just like the category of simplicial sets). 
This means that we have a series of adjunctions:
\begin{equation*}
\pi_0 \dashv \const \dashv \Gamma \dashv \codis
\end{equation*}
and moreover the functor $\pi_0$ commutes with finite products. In more detail these functors are given by: 
\begin{align*}
	\codis: \Set & \to \sCart \\
	S & \mapsto (\A^{m|p} \mapsto S^{R^{\times m}} = S^{\Gamma(\A^{m|p})}) \\
	\Gamma: \sCart & \to \Set \\
	X & \mapsto X(\A^0) \\
	\const: \Set & \to \sCart \\
	S & \mapsto \coprod_S \A^0 \\
	\pi_0 : \sCart & \to \Set \\
	X & \mapsto \colim_{\sA^\op} X
\end{align*}
The functor $\pi_0$ sends a \sacs, viewed as diagram of sets indexed on $\sA^\op$, to its colimit. The functor $\Gamma$ evaluates a \sacs~on the terminal object $\A^0$. The functor $\const$ sends a set to the constant presheaf on that set, and $\codis$ sends a set to the {\em codiscrete} \sacs~on that set. 

These functors allow us to pass back and forth between set based mathematical concepts and those same concepts developed internally to \sacss. For example every ring object in \sacss~has, via the functor $\Gamma$, an underlying ordinary ring. For example $\Gamma(\cO) = R$ is our chosen base ring. Similarly every ordinary ring may be augmented, via the functor $\const$, to a ring object internal to \sacss. The counit map
\begin{equation*}
	\const(R) \to \cO
\end{equation*}
is automatically a map of ring objects. These observations will be used in Section~\ref{sec:scommalg-in-sacs}. 

As we mentioned above, all of these considerations are functorial in the base ring $R$. A ring homomorphism $R' \to R$ induces a functor $\sA_{R'} \to \sA_R$ and hence gives rise to a geometric morphism of topoi 
\begin{equation*}
	f^*: \sCart_{R'} \leftrightarrows \sCart_R: f_*
\end{equation*}
where the {\em restriction of scalars} $f_*$ is given by precomposition with $\sA_{R'}^\op \to \sA_R^\op$. In particular since this is a geometric morphism of topoi the left-adjoint, which is given by left Kan extension along the Yoneda embedding, commutes with finite limits. Moreover this morphism of topoi is {\em local}, that is the functor $f_*$ admits a further right adjoint $f^!: \sCart_{R'}  \to \sCart_R$.


\subsection[and superalgebras]{Superalgebraic cartesian sets and superalgebras} \label{subsec:superalg}

Superalgebraic cartesian sets have a close connection to superalgebras and superschemes. 
The category $\sA$ is the multisorted Lawvere theory\footnote{In fact it is a {\em super Fermat theory} \cite{Carchedi:2012ab}.} for supercommutative $R$-algebras,  
which means that supercommutative $R$-algebras in any category $\cC$ with finite products are the same as product preserving functors $\sA \to \cC$. The {\em generic object} of $\sA$ is the supercommutative $R$-algebra $\cO$ from Example~\ref{example:the_ring_opbject_O}. 

\begin{example}
	The Yoneda embedding $\sA \to \sCart$ preserves products and corresponds to the supercommutative $R$-algebra object $\cO$ in \sacss~as in Example~\ref{example:the_ring_opbject_O}. 
\end{example}


Recall that $\sA$ is a full subcategory of $\sAlg^{\text{op}}$. The embedding of $\sA$ into $\sAlg^{\text{op}}$ is via the functor $\cO(-) = \sCart(-, \cO)$. This formula extends the functor $\cO$ to all of $\sCart$, and for a \sacs~$X$ we will refer to $\cO(X)$ as the {\em ring of global functions} on $X$.

\begin{example} \label{ex:superspec}
	The functor $\cO: \sCart \to \sAlg^\op $ from \sacss~to the opposite category of supercommutative $R$-algebras is easily seen to commute with colimits.  It follows that it is given by left Kan extension of its restriction to $\sA$ along the Yoneda embedding. 
	\begin{center}
	\begin{tikzpicture}
			\node (LT) at (0, 1.5) {$\sA$};
			\node (LB) at (0, 0) {$\sCart$};
			\node (RT) at (4, 1.5) {$\sAlg^{\text{op}}$};
			\draw [->] (LT) -- node [left] {$y$} (LB);
			\draw [->] (LT) -- node [above] {$\cO$} (RT);
			\draw [->] (LB) -- node [above left] {$\cO$} (RT);
			\draw [transform canvas={yshift=-1ex},->] (RT) -- node [below right] {$\cO^*$} (LB);
			
	\end{tikzpicture}
	\end{center}
We obtain an adjunction:
\begin{equation*}
	\cO: \sCart \rightleftarrows \sAlg^{\text{op}}: \cO^*.
\end{equation*}		
	The right adjoint $\cO^*$ is the functor sending a supercommutative algebra $A$ to the \sacs~defined via
	\begin{equation*}
		\sCart(\A^{n|q}, \cO^*(A)) \cong \sAlg^{\text{op}}(\cO(\A^{n|q}), A) = \sAlg(A, \cO(\A^{n|q})).
	\end{equation*} 
	Thus every supercommutative algebra gives rise to a \sacs. 
\end{example}



\begin{example} \label{ex:Omega}
We define a \sacs~called $\Omega$ (purposefully similar to Sullivan's $\Omega^{*}_{\bullet}$ introduced in Section~\ref{sec:forms}) which sends $\A^{n|q}$ to the supercommutative ring $\Sect(\underline{\sCart}(\A^{0|1},\A^{n|q}))$. Thus $\Omega$ is another supercommutative ring object in \sacss. It is an algebra over the supercommutative ring $\cO$, and we will see in Section~\ref{sec:endos} that $\Omega(\A^{n|q})$ is isomorphic to the ring of K\"ahler differential forms on $\cO(\A^{n|q})$. 
\end{example}

\subsection[and simplicial sets]{Superalgebraic cartesian sets and simplicial sets}\label{sec:SACS_simplicial}

Let $\Delta$ be the category of combinatorial simplices (i.e. the category of finite non-empty totally ordered sets and order preserving maps). There is an important faithful functor (which factors through the category of (non-super) algebraic cartesian sets)
\[
i: \Delta \lra{} \sA.
\]
The functor $i$ sends $[n]$ to $\A^{n} = \A^{n|0}$ and we use the isomorphism 
\[
\Sect(\A^n) = R[x_1, \ldots, x_n] \cong R[x_0,\ldots,x_n]/(\Sigma_i x_i - 1)
\]
to see the simplicial maps and identities. The $\A^{n|0}$ may be viewed as {\em extended simplices}. 

\begin{example}\label{example:sSetasSACS}
Let $\sSet = \Pre(\Delta)$ be the category of simplicial sets. We apologize for the use of the letter ``s" for both simplicial and super. Given a simplicial set $X$, we can form a \sacs~by left Kan extension. We have the following diagram
\begin{center}
\begin{tikzpicture}
		\node (LT) at (0, 1.5) {$\Delta$};
		\node (LB) at (0, 0) {$\sSet$};
		\node (MT) at (2, 1.5) {$\sA$};
		\node (RT) at (4, 1.5) {$\sCart$};
		\draw [->] (LT) -- node [left] {$y$} (LB);
		\draw [->] (LT) -- node [above] {$i$} (MT);
		\draw [->] (MT) -- node [above] {$y$} (RT);
		\draw [->] (LB) -- node [above left] {$i_!$} (RT);
		\draw [transform canvas={yshift=-1ex},->] (RT) -- node [below right] {$i^*$} (LB);
\end{tikzpicture}
\end{center}
and the \sacs~associated to $X$ is $i_{!}X$, the left Kan extension along the Yoneda embedding. We will call this the {\em algebraic realization} of $X$, in analogy with the geometric realization. 
This fits into an adjunction with the restriction functor $i^*$ that brings a \sacs~to its underlying simplicial set. 
\begin{equation*}
	i_!: \sSet \rightleftarrows \sCart: i^*
\end{equation*}
Given a \sacs~$Y$ and a simplicial set $X$, there is a natural isomorphism
\[
\sSet(X, i^* Y) \cong \sCart(i_! X, Y). 
\]
As a left adjoint, $i_!$ commutes with colimits.

Furthermore, $i^*$ also commutes with colimits, hence it admits a further right adjoint $i_*$, given by right Kan extension. The triple $(i_!, i^*, i_*)$ constitutes an {\em essential morphism of topoi} \cite{MR2369017} from $\sSet$ to $\sCart$. 
\end{example}

\begin{prop}
	Recall the functor $\pi_0: \sCart \to \Set$ introduced previously. We have the equality $\pi_0 \cong \pi_0 \circ i^*$, in other words the functor $\pi_0$ applied to a \sacs~may be computed as the path components of the underlying simplical set. Similarly $\pi_0 \cong \pi_0 \circ i_!$, the path components of a simplical set may be computed as the value of $\pi_0$ applied to its algebraic realization. 
\end{prop}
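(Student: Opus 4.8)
The plan is to deduce both isomorphisms formally from the cohesion adjunctions $\pi_0\dashv\const\dashv\Gamma\dashv\codis$ on the two presheaf topoi together with the essential morphism $i_{!}\dashv i^{*}\dashv i_{*}$, the only substantive input being one connectedness statement. The two facts about constant objects that this requires are $i^{*}\circ\const_{\sCart}\cong\const_{\sSet}$ --- restricting a constant presheaf along $i$ leaves it constant --- and $i_{*}\circ\const_{\sSet}\cong\const_{\sCart}$. For the latter I would use the pointwise formula for the right Kan extension $i_{*}=\mathrm{Ran}_{i^{\op}}$: the value of $i_{*}(\const_{\sSet}(S))$ at $\A^{n|q}$ is the limit of the constant diagram at $S$ over the comma category whose objects are pairs $([m],\,g\colon\A^{m}\to\A^{n|q})$ (this is, up to passing to the opposite, the category of elements of the simplicial set $i^{*}\A^{n|q}$), and I claim this category is nonempty and connected, so the limit is $S$. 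Granting the claim, the proposition is pure adjoint calculus: from $i_{!}\dashv i^{*}$ and $\pi_0^{\sCart}\dashv\const_{\sCart}$ one gets $\pi_0^{\sCart}\circ i_{!}\dashv i^{*}\circ\const_{\sCart}\cong\const_{\sSet}$, and from $i^{*}\dashv i_{*}$ and $\pi_0^{\sSet}\dashv\const_{\sSet}$ one gets $\pi_0^{\sSet}\circ i^{*}\dashv i_{*}\circ\const_{\sSet}\cong\const_{\sCart}$; comparing these with $\pi_0^{\sSet}\dashv\const_{\sSet}$ and $\pi_0^{\sCart}\dashv\const_{\sCart}$ and invoking uniqueness of adjoints yields $\pi_0^{\sSet}\cong\pi_0^{\sCart}\circ i_{!}$ and $\pi_0^{\sCart}\cong\pi_0^{\sSet}\circ i^{*}$.

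What remains --- and this is the substantive step --- is the connectedness claim. The comma category above has objects $([m],\sigma\colon\A^{m}\to\A^{n|q})$ with morphisms the simplicial operators compatible with the $\sigma$'s; it is nonempty since, for instance, the origin $\A^{0}\to\A^{n|q}$ is such a map. For connectedness I would first note that every object $([m],\sigma)$ is joined by a morphism to the vertex object $([0],\,\sigma\circ i(v))$, where $v\colon[0]\to[m]$ picks out the initial vertex, so it suffices to connect any two objects over $[0]$. An object over $[0]$ is a graded ring homomorphism $R[x_{1},\ldots,x_{n},\e_{1},\ldots,\e_{q}]\to R$; since $R$ is concentrated in even degree it must kill all the $\e_{j}$, and so is just a point $a=(a_{1},\ldots,a_{n})\in R^{n}$. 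The $1$-simplex corresponding to the ring map $R[x_{1},\ldots,x_{n},\e_{1},\ldots,\e_{q}]\to R[x_{1}]$, $x_{k}\mapsto a_{k}x_{1}$, $\e_{j}\mapsto 0$, has the origin and $a$ as its two vertices, so every point is connected to the origin; hence the category is connected.

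The main obstacle is exactly this connectedness statement: that the simplicial set of polynomial maps $\A^{m}\to\A^{n|q}$ is path connected, witnessed by the affine straight-line homotopies. It is the one point at which the polynomial (``cartesian'') nature of the site $\sA$, rather than general topos formalism, is used. Everything else --- the behaviour of constant presheaves under $i^{*}$ and $i_{*}$, the pointwise Kan extension formula, and uniqueness of adjoints --- is routine. Combining the two isomorphisms with the identification of $\pi_0$ on $\sSet$ with ordinary simplicial path components then gives the two descriptions of $\pi_0$ stated in the proposition.
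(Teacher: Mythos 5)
Your argument is correct, and for the first isomorphism it is essentially the paper's argument in disguise: both proofs hinge on the identification $i_*\circ\const_{\sSet}\cong\const_{\sCart}$ (the paper phrases this as $i_*\coprod_S\Delta^0\cong\const(S)$) and then transport the adjunction $\pi_0\dashv\const$ across $i^*\dashv i_*$; the paper merely asserts that this identification ``follows formally from the observation that $i^*\A^{m|p}$ is a connected simplicial set,'' whereas you actually prove that connectedness, reducing to vertices (where the odd generators must die since $R$ is purely even) and joining any point $a\in R^n$ to the origin by the linear $1$-simplex $x_k\mapsto a_kx_1$, $\e_j\mapsto 0$. That is a genuine, and welcome, addition of detail, and you correctly identify it as the only place the polynomial structure of $\sA$ enters. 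Where you diverge is the second isomorphism: the paper writes $X\cong\Colim{}\Delta^k$ over its simplices and uses that $\pi_0$ and $i_!$ preserve colimits together with $\pi_0\A^{k|0}\cong pt\cong\pi_0\Delta^k$, while you instead run the dual adjoint-uniqueness argument through $i^*\circ\const_{\sCart}\cong\const_{\sSet}$ (which holds because $\const(S)=\coprod_S\A^0$ really is the constant presheaf, $\A^0$ being terminal). Your route is more uniform --- both halves become instances of ``composite left adjoints of isomorphic right adjoints agree'' --- at the cost of being slightly less concrete than the paper's colimit decomposition; both are valid, and the two could even be mixed and matched.
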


\begin{proof}
	Recall that $\pi_0 X = \colim_{\sA^\op}X$ and that $\pi_0 i^*(X) = \colim_{\Delta^\op} X \circ i$. Thus one way to see this is to show directly that $\Delta^\op$ is cofinal in $\sA^\op$. Alternatively first observe that $i_*$ sends discrete simplicial sets $\coprod_S \Delta^0$ to constant \sacss~$\coprod_S \A^0 = \const(S)$. This follows formally from the observation that $i^* \A^{m|p}$ is a connected simplical set for each $m|p$. From this the above proposition follows immediately since for any set $S$ and any \sacs~$X$ we have
	\begin{align*}
		\Set( \pi_0 X, S) &\cong \sCart(X, \const(S)) \\
		 & \cong \sCart(X, i_* \coprod_S \Delta^0) \\
		 & \cong \sSet(i^* X, \coprod_S \Delta^0) \\
		& \cong \Set( \pi_0 i^* X, S). \qedhere
	\end{align*}
The second statement is easier:
\begin{align*}
	\pi_0 X & \cong \pi_0 \Colim{i_!(\Delta^k \rightarrow X)} \Delta^k \\
	& \cong \Colim{i_!(\Delta^k \rightarrow X)} \pi_0 \Delta^k \\
	& \cong \Colim{i_!(\Delta^k \rightarrow X)} \pi_0 i_!(\Delta^k) \\
	& \cong \pi_0 i_!  \Colim{i_!(\Delta^k \rightarrow X)} \Delta^k \\
	& \cong \pi_0 i_! X.
\end{align*}
The first and last isomorphisms just rewrite $X$ as a colimit over its simplices, the second and fourth isomorphisms follow from the fact that the functors $\pi_0$ and $i_!$ commute with colimits (they are left adjoints), and the third isomorphism is the fact that $\pi_0 \A^{n|0} \cong pt \cong \pi_0 \Delta^k$.
\end{proof}

\begin{example}
	The functor $i$ from Example~\ref{example:sSetasSACS} factors through the category $\sF$ of finite non-empty sets. Thus there is a situation which is entirely analogous to the previous one with simplicial sets replaced with the category $\Pre(\sF)$ of presheaves on $\sF$. This latter is sometimes called the category of {\em symmetric simplicial sets}. 
	
	In fact the category of symmetric simplicial sets should be regarded as a special case of our notion of \sacss; it is the case where the base ring is $\F_1$, the {\em ``field with one element''}. The functors corresponding to $i_!$ and $i^*$ above are then just `base change' and `restriction of scalars' between $\F_1$ and $R$. 
\end{example}

These observations suggest that we should regard \sacss~as an enhanced version of simplicial sets. They are symmetric simplicial sets equipped with additional `face' and `degeneracy' operators which depend on the base ring $R$.



\section{The Picard Category} \label{sec:scommalg-in-sacs}

Recall that we have a series of adjunctions 
\begin{equation*}
	\pi_0 \dashv \const \dashv \Gamma \dashv \codis
\end{equation*}
which relate the topos of sets to the topos of \sacss. The global sections functor $\Gamma$ is a left inverse to the constant presheaf functor, $\Gamma \circ \const \cong id_{\Set}$. Hence we can view the category of sets as consisting of the full subcategory of constant presheaves. For example, the ground ring $R$ induces a ring object $\const(R)$ in $\sCart$, the constant presheaf with value $R$, which we will denote by $R$ to simplify notation. 

Recall that the object $\cO$ is a supercommutative $R$-algebra in $\sCart$ and that $\Gamma(\cO) = \A^{1|1}(\A^0) = R$. 
The $R$-algebra structure may be viewed as coming from the counit map $R = \const \circ \Gamma(\cO) \to \cO$. In this section we develop the internal theory of $\cO$-modules in order to study the invertible $\cO$-modules.


An $\cO$-module will be defined in the usual internal manner: an $\cO$-module is a $\Z/2\Z$-graded superalgebraic cartesian abelian group $M$ with an action by $\cO$. Equivalently, $M$ is a \sacs~such that $M(\A^{n|q})$ is an $\cO(\A^{n|q})$-module for each $\A^{n|q} \in \sA$. Here, since $\cO$ is a {\em super}commutative ring, we mean `module' in the $\Z/2\Z$-graded sense. 

The category $\Mod_\cO$ of $\cO$-modules is a symmetric monoidal abelian category with tensor product $\otimes_{\cO}$ given pointwise:
\begin{equation*}
	(M \otimes_{\cO} N)(\A^{n|q}) := M(\A^{n|q}) \otimes_{\cO(\A^{n|q})} N(\A^{n|q}) 
\end{equation*} 
for $\A^{n|q} \in \sA$. The forgetful functor from $\Mod_\cO$ to $\sCart$ has a left-adjoint  which takes the \sacs~$X$ to the free $\cO$-module $F_\cO(X)$. The value of $F_\cO(X)$ on $\A^{n|q}\in \sA$ is given by $F_{\cO(\A^{n|q})}(X(\A^{n|q}))$, the free $\cO(\A^{n|q})$-module on the set $X(\A^{n|q})$. 

In addition $\Mod_\cO$ has an enrichment in $\sCart$. A map $\A^{n|q} \to \uHom_{\cO}(M, N)$ is defined via
\begin{align*}
	\sCart(\A^{n|q}, \uHom_{\cO}(M, N)) \cong \Hom_{\cO}(F_\cO(\A^{n|q}) \otimes_{\cO}M, N).
\end{align*}
This makes $\Mod_\cO$ into a category enriched in $\sCart$. In fact this enrichment extends to one in the symmetric monoidal category of $\cO$-modules; $\Mod_\cO$ is a closed symmetric monoidal category. To distinguish between the ordinary category of $\cO$-modules and the $\cO$-linear category (enriched in $\cO$-modules) we will denote the former by $\Mod_\cO$ and the latter by $\MMod_\cO$.

Let $\Mod_R$ denote the ordinary category of $R$-modules (in sets). This is a closed symmetric monoidal category and thus an $R$-linear category. Since $\Gamma(\cO) = R$, we obtain an adjunction:
\begin{equation*}
	\cO \otimes_R (-): \Mod_R \rightleftarrows \Mod_\cO: \Gamma, 
\end{equation*}
where the right-adjoint simply applies $\Gamma$ to both the module and ring structure (it is evaluation at $\A^0 \in \sA$). The left-adjoint is given by first viewing a set theoretical $R$-module as a constant (discrete) \sacs~and then tensoring up to obtain an $\cO$-module. As expected, this is a monoidal adjunction with respect to the two symmetric monoidal structures $\otimes_R$ and $\otimes_\cO$, and moreover $\Gamma \circ (\cO \otimes_R (-)) \cong id$ is the identity functor.  

We can do slightly better. Since the above adjunction is monoidal, the functor $\cO \otimes_R(-)$ may be used to enhance the enrichment of $\Mod_R$ in itself into an enrichment in $\Mod_\cO$. Thus for ordinary $R$-modules $M$ and $N$, there exists an $\cO$-module (hence a \sacs) of homomorphisms between them, given by:
\begin{equation*}
	\cO \otimes_R \Hom_R(M,N). 
\end{equation*}
We will denote this new $\Mod_\cO$-enriched category as $\MMod_R$. It has the same objects as $\Mod_R$. The above adjunction now gives rise to a $\Mod_\cO$-enriched  functor:
\begin{equation*}
	\cO \otimes_R (-): \MMod_R \to \MMod_\cO, 
\end{equation*}
which sends an $R$-module $M$ to the $\cO$-module $\cO \otimes_R \const(M)$. Note that the functor $\Gamma$ will not automatically be an enriched functor. 

\begin{lemma}\label{lem:fully-faithful-enriched-inclusion}
	Let $\MMod_R^\textrm{f.g. proj}$ denote the full subcategory of the $\Mod_\cO$-enriched category $\MMod_R$ consisting of those $R$-modules which are finitely generated and projective. Then the restricted $\Mod_\cO$-enriched functor
	\begin{equation*}
		\cO \otimes_R (-):\MMod_R^\textrm{f.g. proj} \to \MMod_\cO
	\end{equation*}
	is fully-faithful (in the enriched sense). 
 \end{lemma}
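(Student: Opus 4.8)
The plan is to unwind the definition of enriched fully-faithfulness and then reduce, by an additivity argument, to the single case $M=R$.

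First I would record what must be checked. By definition the enriched functor $\cO\otimes_R(-)$ is fully-faithful precisely when, for every pair of finitely generated projective $R$-modules $M,N$, its structure morphism on hom-objects
\[
\Phi_{M,N}\colon\ \cO\otimes_R\Hom_R(M,N)\ \longrightarrow\ \uHom_\cO\bigl(\cO\otimes_R M,\ \cO\otimes_R N\bigr)
\]
is an isomorphism of $\cO$-modules. Here the left-hand side is the hom-object of $\MMod_R$ and the right-hand side that of $\MMod_\cO$; the map $\Phi_{M,N}$ is the canonical comparison map, adjoint to $\cO\otimes_R(\mathrm{ev})\colon \cO\otimes_R\bigl(\Hom_R(M,N)\otimes_R M\bigr)\to\cO\otimes_R N$, which is the one induced by the strong monoidal structure on $\cO\otimes_R(-)$ discussed above.

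Second, I would fix $N$ and regard both sides as contravariant functors of $M$ into the abelian category $\Mod_\cO$, connected by the natural transformation $\Phi_{-,N}$. The source functor is additive, since $\Hom_R(-,N)$ sends finite biproducts to finite biproducts and $\cO\otimes_R(-)$ preserves colimits; the target functor is additive for the same reason, since $\cO\otimes_R(-)$ preserves finite biproducts and $\uHom_\cO(-,Q)$ turns finite biproducts into finite biproducts. Hence both functors carry retracts to retracts, and $\Phi_{-,N}$ is compatible with all of this. Because a morphism in $\Mod_\cO$ which is a retract, in the arrow category, of an isomorphism is itself an isomorphism, and because every finitely generated projective $R$-module is a retract of a finite free module $R^k$, it suffices to show $\Phi_{R^k,N}$ is an isomorphism for every $k$; by additivity once more this reduces to $k=1$. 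The case $M=R$ is then formal: the left side is $\cO\otimes_R\Hom_R(R,N)\cong\cO\otimes_R N$, the right side is $\uHom_\cO(\cO\otimes_R R,\cO\otimes_R N)\cong\uHom_\cO(\cO,\cO\otimes_R N)$, and $\Phi_{R,N}$ is exactly the canonical isomorphism $Q\xrightarrow{\ \sim\ }\uHom_\cO(\cO,Q)$ with $Q=\cO\otimes_R N$, expressing that $\cO$ is the unit of the closed symmetric monoidal structure on $\Mod_\cO$.

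I expect the main obstacle to be the bookkeeping in the second step: one must check that the abstractly defined structure map $\Phi_{M,N}$ of the enriched functor really is the canonical comparison map, that it is natural in $M$ and compatible with the biproduct decompositions on both sides, and that the internal-hom object is correctly identified (in particular $\uHom_\cO(\cO,Q)\cong Q$), all while carrying the $\Z/2$-grading along. A more hands-on alternative is to evaluate everything on a representable $\A^{n|q}$, set $A=\cO(\A^{n|q})=R[x_1,\dots,x_n,\e_1,\dots,\e_q]$, and reduce to the classical fact that the base-change map $A\otimes_R\Hom_R(M,N)\to\Hom_A(A\otimes_R M,A\otimes_R N)$ is an isomorphism for $M$ finitely generated projective; this route also makes transparent that finite generation is genuinely needed, since the comparison map already fails for $M$ free of countably infinite rank (as $\cO\otimes_R(-)$ does not commute with infinite products), while projectivity is what licenses the retract step.
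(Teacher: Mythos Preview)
Your proof is correct and uses the same core idea as the paper's---reduce via additivity and the retract property of finitely generated projectives to a trivial base case---but you organize it a bit differently. The paper treats $M$ and $N$ symmetrically: it picks complements $M_1,N_1$ with $M\oplus M_1$ and $N\oplus N_1$ finitely generated free, observes that the direct sum $\bigoplus_{i,j}\Phi_{M_i,N_j}$ is the comparison map for free modules and hence an isomorphism, and then invokes the fact that in an abelian category a finite direct sum of maps is an isomorphism if and only if each summand is. You instead reduce only in the variable $M$, landing directly on the unit isomorphism $\cO\otimes_R N\cong\uHom_\cO(\cO,\cO\otimes_R N)$; in particular your argument never uses that $N$ is finitely generated or projective. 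This is a mild sharpening (and matches the classical fact that $A\otimes_R\Hom_R(M,N)\to\Hom_A(A\otimes_R M,A\otimes_R N)$ is an isomorphism once $M$ is finitely generated projective, with no hypothesis on $N$), while the paper's symmetric treatment is slightly redundant but arguably more transparent.
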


\begin{proof}
	We must show that the canonical map of $\cO$-modules
\begin{equation*}
	\cO \otimes_R \Hom_R(M,N) \to \uHom_\cO( \cO \otimes_R M, \cO \otimes_R N)
\end{equation*} 	
is an isomorphism if $M$ and $N$ are finitely generated and projective. Note that this is certainly the case if both $M$ and $N$ are finitely generated free $R$-modules. The modules $M= M_0$ and $N=N_0$ are finitely generated and projective if and only if there exist $R$-modules $M_1$ and $N_1$ such that both $M_0 \oplus M_1$ and $N_0 \oplus N_1$ are finitely generated free $R$-modules. Thus the sum of the canonical maps (which is the canonical map of the sums):
\begin{equation*}
	\bigoplus_{i,j=0,1} \cO \otimes_R \Hom_R(M_i,N_j) \to \bigoplus_{i,j=0,1} \uHom_\cO( \cO \otimes_R M_i, \cO \otimes_R N_j)
\end{equation*}
is an isomorphism. The lemma now follows from the observation that in an abelian category a finite collection of maps is a collection of isomorphisms if and only if the direct sum of the collection is an isomorphism.
\end{proof}

Let $\Pic_\cO$ be the {\em Picard category} of $\cO$. It is the full subcategory of $\Mod_\cO$ consisting of the {\em invertible} $\cO$-modules, those $\cO$-modules $M$ such that there exists an $\cO$-module $M'$ with the property that $M \otimes_\cO M' \cong M' \otimes_\cO M \cong \cO$. Let $\PPic_\cO$ denote the corresponding $\Mod_\cO$-enriched subcategory. Similarly $\Pic_R$ will denote the category of invertible $R$-modules and $\PPic_R$ the corresponding $\Mod_\cO$-enriched category. Since $\cO \otimes_R (-)$ is a monoidal functor, it sends invertible objects to invertible objects. Hence we have an induced $\Mod_\cO$-enriched functor:
\begin{equation*}
	\cO \otimes_R (-): \PPic_R \to \PPic_\cO.
\end{equation*}
The following theorem is the main result of this section.

\begin{thm}\label{thm:Picard-equivalence}
	The functor $\cO \otimes_R (-): \PPic_R \to \PPic_\cO$ induces an equivalence of $\Mod_\cO$-enriched symmetric monoidal categories. 
\end{thm}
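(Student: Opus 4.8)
The plan is to show that the $\Mod_\cO$-enriched functor $\cO \otimes_R (-)\colon \PPic_R \to \PPic_\cO$ is essentially surjective and fully faithful, and that it is symmetric monoidal; the symmetric monoidal structure is essentially automatic since $\cO \otimes_R (-)\colon \Mod_R \to \Mod_\cO$ is a symmetric monoidal functor and passing to Picard subcategories preserves this. Fully faithfulness (in the enriched sense) is already in hand: every invertible $R$-module is finitely generated and projective (indeed a rank-one projective), so Lemma~\ref{lem:fully-faithful-enriched-inclusion} applies directly and shows that the map of $\cO$-modules $\cO \otimes_R \Hom_R(M,N) \to \uHom_\cO(\cO \otimes_R M, \cO\otimes_R N)$ is an isomorphism for all invertible $M,N$. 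So the entire content of the theorem is \emph{essential surjectivity}: every invertible $\cO$-module is of the form $\cO \otimes_R L$ for some invertible $R$-module $L$.

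For essential surjectivity, let $M$ be an invertible $\cO$-module with inverse $M'$, so $M \otimes_\cO M' \cong \cO$. The natural candidate for the preimage is $L := \Gamma(M) = M(\A^0)$, an $R$-module; the counit of the adjunction $\cO \otimes_R (-) \dashv \Gamma$ gives a canonical map $\cO \otimes_R L \to M$ of $\cO$-modules, and one must show it is an isomorphism and that $L$ is invertible over $R$. First I would check that $L$ is invertible over $R$: applying $\Gamma$ (which is symmetric monoidal up to the canonical comparison, since $\Gamma(\cO) = R$ and $\Gamma$ preserves products, hence the pointwise tensor) to $M \otimes_\cO M' \cong \cO$ should yield $\Gamma(M) \otimes_R \Gamma(M') \cong R$ — here one uses that $\Gamma$ is evaluation at $\A^0$ and the tensor product of $\cO$-modules is computed pointwise, so $\Gamma(M \otimes_\cO M') = M(\A^0)\otimes_{\cO(\A^0)} M'(\A^0) = L \otimes_R \Gamma(M')$. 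Thus $L$ is invertible over $R$, hence finitely generated projective, so by the already-established fully faithfulness and the fact that $\cO \otimes_R(-)$ sends the invertible $R$-module $R$ to $\cO$, we can compare $M$ with $\cO \otimes_R L$.

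The cleanest way to finish is then a descent/locality argument: the counit $\cO\otimes_R L \to M$ is a map between two invertible $\cO$-modules, so to check it is an isomorphism it suffices to check it "pointwise", i.e. that for each $\A^{n|q} \in \sA$ the map $\cO(\A^{n|q}) \otimes_R L \to M(\A^{n|q})$ is an isomorphism of $\cO(\A^{n|q})$-modules. Both sides are invertible modules over the ring $\cO(\A^{n|q}) = R[x_1,\dots,x_n,\e_1,\dots,\e_q]$, and the map is obtained from the $R$-module map $L \to M(\A^{n|q})$ induced by the structure maps of the presheaf $M$ (restriction along $\A^0 \to \A^{n|q}$); one shows this exhibits $M(\A^{n|q})$ as the base change of $L$. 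The key input should be that an invertible module over $\cO(\A^{n|q})$ is automatically free of rank one — because $R[x_1,\dots,x_n,\e_1,\dots,\e_q]$ has trivial Picard group once $R$ does not, \emph{no}: rather one reduces to the underlying reduced ring $R[x_1,\dots,x_n]$ and uses that the nilpotent extension $R[x_1,\dots,x_n,\e_j] \to R[x_1,\dots,x_n]$ induces an isomorphism on Picard groups (invertible modules lift uniquely along nilpotent extensions), combined with $\Pic(R[x]) \cong \Pic(R)$ is \emph{not} true in general — so instead I would avoid computing these Picard groups and argue more structurally: an invertible $\cO$-module $M$ is a retract-stable rank-one object, and choosing a local generator is unnecessary because we can instead show the map $\cO\otimes_R L \to M$ becomes an isomorphism after tensoring with the inverse $M'$, reducing to the statement that $\cO \otimes_R L \otimes_\cO M' \cong \cO \otimes_R \Gamma(M') $ is the canonical comparison, which follows from the fully faithfulness lemma applied with $M = R$ in the source. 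I expect the \textbf{main obstacle} to be exactly this last move: getting the pointwise comparison map to be an isomorphism without an unjustified appeal to triviality of $\Pic(\cO(\A^{n|q}))$. The right fix is to observe that for \emph{any} $\cO$-module $N$, the canonical map $\cO \otimes_R \Gamma(N) \to N$ need not be iso, but when $N$ is invertible we may write $\cO \cong M \otimes_\cO M'$, apply the fully-faithfulness isomorphism $\cO \otimes_R \Hom_R(R, \Gamma(M)) \xrightarrow{\sim} \uHom_\cO(\cO, \cO \otimes_R \Gamma(M))$ twisted by $M'$, and transport the identity of $\cO$ through the composite — i.e. chase the unit $\cO \to M \otimes_\cO M'$ through the enriched-full-faithfulness isomorphism to produce the inverse map $M \to \cO \otimes_R L$ explicitly. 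This is the step that needs care, and it is where the enrichment in Lemma~\ref{lem:fully-faithful-enriched-inclusion} is genuinely used rather than just ordinary fully faithfulness.
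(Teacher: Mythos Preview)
Your outline is right: monoidality is automatic, enriched fully-faithfulness follows from Lemma~\ref{lem:fully-faithful-enriched-inclusion} since invertible modules are finitely generated projective, and the content is essential surjectivity. Your identification of the candidate $L=\Gamma(M)$ is correct, and your argument that $L$ is invertible over $R$ (using that $\otimes_\cO$ and $\Gamma$ are both computed pointwise, so $\Gamma(M\otimes_\cO M')=\Gamma(M)\otimes_R\Gamma(M')$) is fine.

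The gap is in the final step. Your proposal to show the counit $\cO\otimes_R L\to M$ is an isomorphism by tensoring with $M'$ and invoking enriched fully-faithfulness is circular: after tensoring with $M'$ the map becomes $(\cO\otimes_R L)\otimes_\cO M'\to \cO$, and identifying the source with $\cO\otimes_R\Gamma(M')$ is exactly the statement that the counit for $M'$ is an isomorphism, which is what you are trying to prove (for $M$, hence by symmetry for $M'$). The fully-faithfulness lemma only controls maps between objects \emph{already known} to be of the form $\cO\otimes_R(-)$, so it cannot by itself produce an inverse $M\to\cO\otimes_R L$.

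The paper supplies the missing mechanism. First it reduces to the case $\Gamma(M)\cong R$ by replacing $M$ with $M\otimes_\cO(\cO\otimes_R \Gamma(M)^{-1})$. In that case the counit is a map $\cO\to M$, and the paper constructs a candidate retraction $M\to\cO$ by tensoring the identity of $M$ with the counit $\cO\to M'$ and using $M\otimes_\cO M'\cong\cO$. The key input is the elementary Lemma~\ref{lem:gen-ident-dection}: any $\cO$-module endomorphism of $\cO$ that is the identity on global sections is the identity. This forces the composite $\cO\to M\to\cO$ to be the identity, so $\cO\to M$ is a monomorphism; tensoring with the flat module $M'$ then gives a monomorphism $M'\to\cO$, and by symmetry a monomorphism $M\to\cO$. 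Finally, since this last map hits $1\in\cO(S)$ for every $S$ (again because it is the identity after $\Gamma$), it is also an epimorphism, hence an isomorphism. This mono--epi sandwich via Lemma~\ref{lem:gen-ident-dection} is the idea your proposal is missing.
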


\noindent We will prove this theorem after a few lemmas. 

\begin{lemma}\label{lem:Objects-of-pic-fgproj}
	The objects of $\Pic_R$ and $\Pic_\cO$ are finitely generated and projective. 
\end{lemma}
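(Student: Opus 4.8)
The plan is to prove both statements at once, via a single argument valid in any closed symmetric monoidal abelian category in which every finite free object is projective: an invertible object $M$ (so $M \otimes M' \cong \mathbf{1}$ for some $M'$ and the unit $\mathbf 1$) is a direct summand of a finite free object, hence finitely generated and projective. Applied to $\Mod_R$ this is the classical fact, the finite free objects being the $R^{\oplus n}$. Applied to $\Mod_\cO$ the relevant finite free objects are the finite direct sums of copies of $\cO$ and of its parity shift $\Pi\cO$; these are projective because $\Hom_{\Mod_\cO}(\cO,-)$ and $\Hom_{\Mod_\cO}(\Pi\cO,-)$ compute the even and the odd global sections, and these preserve epimorphisms (an epimorphism in $\Mod_\cO$, like any colimit, is detected pointwise). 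I would run the argument once, writing $\mathbf 1$ for the unit ($R$, resp.\ $\cO$).

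Here is the argument. Fix an isomorphism $f : M \otimes M' \to \mathbf{1}$. Since $\Gamma$ carries $\otimes_\cO$ to $\otimes_R$ and $\cO$ to $R$, applying it to $f$ gives an isomorphism $\Gamma(M) \otimes_R \Gamma(M') \to R$ ($\Gamma$ being the identity functor in the $R$-linear case); let $\sum_{i=1}^{n} m_i \otimes m'_i$ be the preimage of $1$, with all $m_i$ and $m'_i$ chosen homogeneous. Extending each $m_i$ $\cO$-linearly — through a copy of $\cO$ if $m_i$ is even, through a copy of $\Pi\cO$ if $m_i$ is odd — produces a map $u : P \to M$ out of a finite free $\cO$-module $P$, with image $M_0$ and cokernel $C$, so that $M_0 \hookrightarrow M \twoheadrightarrow C$ is exact. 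Tensoring this with $M'$ and composing with $f$, the resulting map $M_0 \otimes M' \to \mathbf{1}$ has $1 \in \cO(\A^0)$ in its image (the element $\sum m_i \otimes m'_i$ already lies in $(M_0 \otimes M')(\A^0)$ and maps to $1$); since an $\cO$-submodule of $\cO$ whose value at $\A^0$ contains $1$ is all of $\cO$ — restrict $1$ along the maps $\A^{n|q}\to\A^0$ — this map is an epimorphism, and hence $C \otimes M' = 0$. Tensoring once more with $M$ and using $M' \otimes M \cong \mathbf{1}$ gives $C \cong C \otimes M' \otimes M = 0$, so $u$ is already an epimorphism $P \twoheadrightarrow M$. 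Finally, tensoring $u$ with $M'$ gives an epimorphism $P \otimes M' \twoheadrightarrow M \otimes M' \cong \mathbf{1}$; since $\mathbf{1}$ is projective this splits, exhibiting $\mathbf{1}$ as a direct summand of $P \otimes M'$, and tensoring with $M$ exhibits $M$ as a direct summand of $P \otimes M' \otimes M \cong P$. Thus $M$ is a direct summand of a finite free object, hence finitely generated and projective.

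I do not expect a genuine obstacle here: the only steps that use anything about $\Mod_\cO$ beyond formal symmetric-monoidal-abelian manipulation are the right-exactness of $\otimes_\cO$, the pointwise computation of images and cokernels, the projectivity of finite free $\cO$-modules, and the fact that $1 \in \cO(\A^0)$ generates $\cO$ — all immediate from the pointwise description of $\Mod_\cO$. The one place the super structure genuinely intervenes is that $\Pi\cO$ must be allowed among the free building blocks: an odd invertible module such as $\Pi\cO$ itself is not a direct summand of any $\cO^{\oplus n}$ (compare even parts), so ``finite free'' has to mean a finite direct sum of copies of $\cO$ and of $\Pi\cO$. Note also that one cannot shortcut the $\Pic_\cO$ case by invoking the equivalence $\PPic_R \simeq \PPic_\cO$, since Theorem~\ref{thm:Picard-equivalence} is itself proved using the present lemma.
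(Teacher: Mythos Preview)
Your proof is correct, but it takes a substantially different route from the paper's.

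The paper's argument is a two-line categorical observation: for $M \in \Pic$, the functor $M \otimes (-)$ is an autoequivalence of the module category; autoequivalences preserve finitely generated projective objects (both ``finitely generated'' and ``projective'' being characterizable by categorical properties, namely compactness and preservation of epimorphisms under $\Hom$); and the unit object is finitely generated projective. Hence $M = M \otimes \mathbf{1}$ is too. This works verbatim in $\Mod_R$ and in $\Mod_\cO$, and indeed in any closed symmetric monoidal abelian category whose unit is compact and projective.

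Your approach is more constructive: you actually exhibit a finite free module $P$ and show $M$ is a retract of it, by lifting the element $1 \in \Gamma(M \otimes M')$ to a finite sum of simple tensors and using those tensors to build $u : P \to M$. The paper's proof is shorter and makes the generality transparent; yours is more informative in that it tells you roughly how large a free module is needed and makes explicit the role of $\Gamma$ and the pointwise structure of $\Mod_\cO$. Your observation that $\Pi\cO$ must be allowed among the free summands is a genuine subtlety that the categorical argument sidesteps entirely (since it never names a free resolution), and your closing remark about not invoking Theorem~\ref{thm:Picard-equivalence} is well taken.
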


\begin{proof}
	This lemma is classical. We will only need the finite generation in the case of $R$-modules. 
	 A categorical proof of this notes that for an object $M \in \Pic_R$ the functor $M\otimes_R (-)$ is an equivalence of categories. Any equivalence of categories preserves the finitely generated projective objects (these are characterized by categorical properties) and moreover the trivial $R$-module $R$ is a finitely generated projective module. Hence the image of $R$ under $M \otimes_R (-)$, that is to say the module $M$, is also a finitely generated projective module. 
\end{proof}

\begin{cor}\label{cor:pic_fullyfaithful}
	The functor $\cO \otimes_R (-): \PPic_R \to \PPic_\cO$ is fully-faithful. \qed
\end{cor}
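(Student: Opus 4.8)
The plan is to deduce this fully-faithfulness formally from the two preceding lemmas, so that essentially no new computation is needed. Fully-faithfulness in the $\Mod_\cO$-enriched sense means that for every pair of objects $M, N \in \PPic_R$ the induced map of $\cO$-modules on hom-objects is an isomorphism. I would exhibit this map as a restriction of the corresponding map for the larger enriched functor $\cO \otimes_R (-) : \MMod_R^{\textrm{f.g. proj}} \to \MMod_\cO$, whose isomorphism on hom-objects is exactly the content of Lemma~\ref{lem:fully-faithful-enriched-inclusion}.

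First I would invoke Lemma~\ref{lem:Objects-of-pic-fgproj} to observe that every object of $\Pic_R$ is a finitely generated projective $R$-module. Consequently $\PPic_R$ is a full $\Mod_\cO$-enriched subcategory of $\MMod_R^{\textrm{f.g. proj}}$: its objects are among the objects of $\MMod_R^{\textrm{f.g. proj}}$, and by construction its hom-objects $\cO \otimes_R \Hom_R(M,N)$ coincide with those of $\MMod_R^{\textrm{f.g. proj}}$. Dually, $\PPic_\cO$ is by definition a full $\Mod_\cO$-enriched subcategory of $\MMod_\cO$, so its hom-object between $\cO \otimes_R M$ and $\cO \otimes_R N$ is the internal hom $\uHom_\cO(\cO \otimes_R M, \cO \otimes_R N)$ computed in $\MMod_\cO$.

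With these identifications, the map on hom-objects induced by $\cO \otimes_R (-) : \PPic_R \to \PPic_\cO$ is literally the canonical map
\begin{equation*}
	\cO \otimes_R \Hom_R(M,N) \to \uHom_\cO(\cO \otimes_R M, \cO \otimes_R N)
\end{equation*}
appearing in Lemma~\ref{lem:fully-faithful-enriched-inclusion}. Since $M$ and $N$ are finitely generated and projective, that lemma asserts this map is an isomorphism, which is exactly the fully-faithfulness we want.

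The only point requiring care — and the closest thing to an obstacle — is the bookkeeping about the two full-subcategory inclusions: one must check that passing to the full enriched subcategories $\PPic_R \subseteq \MMod_R^{\textrm{f.g. proj}}$ and $\PPic_\cO \subseteq \MMod_\cO$ neither changes the hom-objects nor the action of the functor on them, so that Lemma~\ref{lem:fully-faithful-enriched-inclusion} applies verbatim. This is immediate from the definitions of the Picard (enriched) subcategories, but it is worth stating explicitly, together with the already-noted fact (preceding Theorem~\ref{thm:Picard-equivalence}) that $\cO \otimes_R (-)$ sends invertible $R$-modules to invertible $\cO$-modules, so that the functor does indeed land in $\PPic_\cO$.
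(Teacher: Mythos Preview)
Your proposal is correct and is exactly the argument the paper intends: the corollary is marked with a \qed\ precisely because it follows immediately by combining Lemma~\ref{lem:Objects-of-pic-fgproj} (objects of $\Pic_R$ are finitely generated projective) with Lemma~\ref{lem:fully-faithful-enriched-inclusion} (the enriched functor is fully faithful on such modules). Your additional remarks about the full-subcategory bookkeeping and the fact that the functor lands in $\PPic_\cO$ are accurate and simply make explicit what the paper leaves implicit.
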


\begin{lemma}\label{lem:gen-ident-dection}
	Let $f: \cO \to \cO$ be any $\cO$-module map. Assume that $\Gamma(f): \Gamma(\cO)  \to \Gamma(\cO)$ is the identity map, then $f$ is the identity map. 
\end{lemma}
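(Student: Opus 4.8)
The plan is to exploit the fact that $\cO$, viewed as a module over itself, is the \emph{free} $\cO$-module on the terminal \sacs~$\A^0$. Indeed, evaluating $F_\cO(\A^0)$ on an object $\A^{n|q}\in\sA$ gives the free $\cO(\A^{n|q})$-module on the one-point set $\A^0(\A^{n|q})$, which is just $\cO(\A^{n|q})$ itself; so $F_\cO(\A^0)\cong\cO$ as $\cO$-modules. Combining the free--forgetful adjunction with the Yoneda lemma applied to the representable presheaf $\A^0$ then yields natural bijections
\[
\Hom_\cO(\cO,\cO)\;=\;\Hom_\cO\!\big(F_\cO(\A^0),\cO\big)\;\cong\;\sCart(\A^0,\cO)\;=\;\cO(\A^0)\;=\;\Gamma(\cO).
\]
First I would record this chain and unwind the composite: it sends an $\cO$-module map $f$ to its component at $\A^0$ evaluated on $1$, i.e.\ to $f_{\A^0}(1)=\Gamma(f)(1)\in\Gamma(\cO)=R$.

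With that identification in hand the argument is immediate: the identity map $\Id_\cO$ corresponds to $\Gamma(\Id_\cO)(1)=1\in R$, and by hypothesis $\Gamma(f)=\Id_R$, so $f$ also corresponds to $\Gamma(f)(1)=1$. Since the displayed correspondence is a bijection, $f=\Id_\cO$, which is the claim. For readers preferring an explicit version I would also include the unpacked form of the same reasoning: $\cO$-linearity of $f$ forces $f_{\A^{n|q}}(a)=a\cdot f_{\A^{n|q}}(1)$ for every $a\in\cO(\A^{n|q})$, so $f_{\A^{n|q}}$ is multiplication by the element $c_{n|q}:=f_{\A^{n|q}}(1)$; applying naturality of $f$ along the unique morphism $\A^{n|q}\to\A^0$ and chasing $1\in\cO(\A^0)$ around the resulting commutative square, using that the restriction $\cO(\A^0)\to\cO(\A^{n|q})$ is the unit-preserving inclusion of constants and that $f_{\A^0}(1)=\Gamma(f)(1)=1$, gives $c_{n|q}=1$ for all $n,q$. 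Hence every component of $f$ is the identity, so $f=\Id_\cO$.

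There is no genuine obstacle here. The only step requiring (routine) care is correctly identifying $\cO$ with the free $\cO$-module on $\A^0$ and verifying that the adjunction isomorphism above is exactly ``evaluate the module endomorphism at the point $\A^0$''; once that is pinned down, the conclusion is a one-line diagram chase. This lemma will then be used, together with Corollary~\ref{cor:pic_fullyfaithful}, to establish essential surjectivity in the proof of Theorem~\ref{thm:Picard-equivalence}.
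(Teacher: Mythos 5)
Your proof is correct and is essentially the paper's argument: the paper likewise chases $1$ through the naturality square over $\A^0$ to get $f_S(1)=1$ and then uses that $1$ generates $\cO(S)$ as an $\cO(S)$-module; your adjunction/Yoneda packaging via $\cO\cong F_\cO(\A^0)$ is just a slicker formulation of the same point.
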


\begin{proof}
	For each $S \in \sA$ we have a commuting diagram 
	\begin{center}
	\begin{tikzpicture}[thick]
			\node (LT) at (0, 1.5) {$\Gamma(\cO) = R$};
			\node (LB) at (0, 0) {$\cO (S)$};
			\node (RT) at (4, 1.5) {$\Gamma(\cO) = R$};
			\node (RB) at (4, 0) {${\cO} (S)$};
			\draw [->] (LT) -- node [left] {$ $} (LB);
			\draw [->] (LT) -- node [above] {$id$} (RT);
			\draw [->] (RT) -- node [right] {$ $} (RB);
			\draw [->] (LB) -- node [below] {$f_S$} (RB);
	\end{tikzpicture}
	\end{center}
	and thus $f_S$ sends $1 \in \cO(S)$ to $1 \in \cO(S)$. As this is a generator of $\cO(S)$ as an $\cO(S)$-module it follows that $f_S$ is the identity for all $S \in \sA$. 
\end{proof}

\begin{proof}[Proof of Thm.~\ref{thm:Picard-equivalence}]
	The functor $\cO \otimes_R (-): \PPic_R \to \PPic_\cO$ is monoidal and, by Corollary~\ref{cor:pic_fullyfaithful}, fully-faithful. It remains to show essential surjectivity, i.e. that every invertible module $M \in \Pic_\cO$ is of the form $\cO \otimes_R L$ for some invertible $R$-module $L$. First note that it is sufficient to prove this under the assumption that $\Gamma(M) \cong R$ is the trivial invertible $R$-module. If $M$ is a general $\cO$-module we may instead consider $N = M \otimes_\cO ( \cO \otimes_R \Gamma(M^{-1}))$, if $N$ is in the essential image then so is $M$. 
	
	Thus without loss of generality assume we have chosen an isomorphism of $R$-modules $\Gamma(M) \cong R$. Let $M'$ be an inverse to $M$. We may also choose an isomorphism $\Gamma(M') \cong R$. Next we will make a few observations. First, from the adjunction $\cO \otimes_R (-) \dashv \Gamma$, we have canonical $\cO$-module homomorphisms
	\begin{align*}
		\cO \cong \cO \otimes_R \Gamma(M) &\to M \\
		\cO \cong \cO \otimes_R \Gamma(M') &\to M'.
	\end{align*}
	Applying $\Gamma$ to either of these yields the identity map of $R$. 
	Next observe that we have a canonical map of $R$-modules in $\sCart$, $R = \Gamma(M) \to M$ and $R = \Gamma(M') \to M'$, where the targets of these are viewed as $R$-modules via the inclusion $R \to \cO$. Again these reduce to the identity maps after applying $\Gamma$. 
	
	Third, we	may tensor together the maps
\[
R \lra{} M' \text{ and } M \lra{\text{id}} M, \text{ over } R \lra{} \cO
\]
to get a map of $\cO$-modules
\[
M \otimes_{R} R \lra{} M \otimes_{\cO} M' \cong \cO.
\]    
Precomposing with the map $\cO \lra{} M$ gives a map of $\cO$-modules
\[
\cO \lra{} M \lra{} \cO.
\]
Since this map reduces to the identity map after applying $\Gamma$, Lemma~\ref{lem:gen-ident-dection} implies that this is the identity map of $\cO$-modules. 

In particular the map $\cO \to M$ is a monomorphism (injective when evaluated on each $S \in \sA$). Tensoring with $M'$ gives a new map
\begin{equation*}
	M' \to M \otimes_\cO M' \cong \cO
\end{equation*} 
which remains a monomorphism since $M'$ is projective (and hence flat). Again this map reduces to the identity after applying $\Gamma$. By symmetry, there exists a monomorphism $M \to \cO$ of $\cO$-modules with the same property (it reduces to the identity after applying $\Gamma$). 

Finally we observe that since the $\cO$-module map $M \to \cO$ is the identity after applying $\Gamma$, it follows that for each $S \in \sA$ the component $M(S) \to \cO(S)$ contains $1 \in \cO(S)$ in its image. Since $1$ is a generator of $\cO(S)$ as an $\cO(S)$-module, it follows that $M(S) \to \cO(S)$ is a surjective map of $\cO(S)$-modules. Consequently the map $M \to \cO$ is both an monomorphism and an epimorphism, hence an isomorphism of $\cO$-modules. In particular $M \cong \cO \otimes_R R$ is in the image of $\PPic_R$.  
\end{proof}

Since sets may be regarded as \sacss~(via the functor $\const$), we may try to regard the $\Mod_\cO$-enriched category $\MMod_\cO$ as a category internal to \sacss. However $\Mod_\cO$ has a large set (or class) of objects, and so is not technically a \sacs. This problem can be avoided for $\PPic_\cO$ since it is essentially small. We will tacitly assume that we have chosen a small set of representative invertible $R$-modules to serve as the set of objects of $\PPic_\cO$. In particular we will regard $\PPic_\cO$ as a symmetric monoidal category internal to \sacss.

\section{Tiny Objects And Internal Homs} \label{sec:tiny}

In this section we explore some of the properties of the internal hom functor from Example~\ref{ex:innerhom}.

\begin{lemma} \label{lma:affine}
There is a natural isomorphism
\[
\underline{\sCart}(\A^{0|1},\A^{n|q}) \cong \A^{n+q|n+q}.
\]
\end{lemma}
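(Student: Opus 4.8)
The plan is to compute the internal mapping object directly from its defining formula, using the presheaf-theoretic description of $\sCart$ and the fact that everything is controlled by the representable case. By Example~\ref{ex:innerhom} we have
\[
\underline{\sCart}(\A^{0|1},\A^{n|q})(\A^{m|p}) = \sCart(\A^{m|p}\times \A^{0|1}, \A^{n|q}),
\]
and since $\A^{m|p}\times\A^{0|1}\cong \A^{m|p+1}$ while $\A^{n|q}$ is representable, the right-hand side is
\[
\sAlg^{\op}\bigl(R[x_1,\dots,x_n,\e_1,\dots,\e_q],\ R[x_1,\dots,x_m,\de_1,\dots,\de_{p+1}]\bigr),
\]
i.e. the set of supercommutative $R$-algebra homomorphisms from the polynomial superalgebra on $n$ even and $q$ odd generators into $\cO(\A^{m|p+1})$. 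So the first step is to identify this Hom-set with $\sCart(\A^{m|p}, \A^{n+q|n+q})$ naturally in $\A^{m|p}$, which by Yoneda will give the claimed isomorphism of \sacss.

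The key computation is the following: a superalgebra map out of $R[x_1,\dots,x_n,\e_1,\dots,\e_q]$ is freely determined by the images of the generators, subject only to parity constraints. Each even generator $x_i$ must go to an even element of $R[x_1,\dots,x_m,\de_1,\dots,\de_{p+1}]$; writing the target's odd variables as $\de_1,\dots,\de_p$ (the ``old'' ones coming from $\A^{m|p}$) together with one new odd variable, call it $\de$, coming from the $\A^{0|1}$ factor, an even element has the form $a_i + b_i\,\de$ where — here is the point — $a_i$ is an \emph{even} element of $\cO(\A^{m|p})$ and $b_i$ is an \emph{odd} element of $\cO(\A^{m|p})$ (so that $b_i\de$ is even). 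Similarly each odd generator $\e_j$ must map to an odd element $c_j + d_j\,\de$ with $c_j$ odd and $d_j$ even in $\cO(\A^{m|p})$. Thus the data of such a homomorphism is exactly: $n$ even elements $a_i$, $n$ odd elements $b_i$, $q$ odd elements $c_j$, and $q$ even elements $d_j$ of $\cO(\A^{m|p})$ — that is, $n+q$ even coordinates and $n+q$ odd coordinates, which is precisely the data of a point $\A^{m|p}\to \A^{n+q|n+q}$. One must then check this correspondence is natural in $\A^{m|p}$: precomposition with a polynomial map $\A^{m'|p'}\to\A^{m|p}$ acts on the $\de$-linear decomposition compatibly, because such a map fixes (up to substitution) the distinguished new odd variable coming from the $\A^{0|1}$ factor — more precisely the splitting $\cO(\A^{m|p+1}) \cong \cO(\A^{m|p})\oplus \cO(\A^{m|p})\cdot\de$ is natural since $\de^2=0$ forces the decomposition to be preserved.

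I expect the main obstacle to be bookkeeping rather than anything deep: one has to be careful about parities (which half of the coordinates of $\A^{n+q|n+q}$ is even and which is odd, and matching $a_i,d_j$ to even coordinates and $b_i,c_j$ to odd ones) and, more importantly, one has to verify that the isomorphism is genuinely natural — i.e. compatible with the presheaf structure maps on both sides. The subtlety is that the ``$\de$-linear part'' decomposition of $\cO(\A^{m|p+1})$ must be shown to be functorial in $\A^{m|p}$; this follows because $\A^{0|1}$ is a tiny object (its underlying superalgebra $R[\de]$ has $\de^2 = 0$, so the augmentation ideal splits off canonically and is preserved by all algebra maps that come from maps of $\A^{m|p}$), but spelling this out carefully is where the real content lies. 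Once naturality is in hand, the Yoneda lemma upgrades the pointwise bijection to the asserted isomorphism $\underline{\sCart}(\A^{0|1},\A^{n|q})\cong\A^{n+q|n+q}$ of \sacss.
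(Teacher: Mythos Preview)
Your proposal is correct and follows essentially the same idea as the paper's proof: both compute the $\A^{m|p}$-points of the internal hom by tracking where the free generators of $\cO(\A^{n|q})$ land in $\cO(\A^{m|p})[\de]$, using the decomposition of elements as $a + b\de$ with parities swapping between $a$ and $b$. The only real difference is organizational: the paper first reduces to the cases $\A^{1}$ and $\A^{0|1}$ via the product decomposition $\A^{n|q}\cong(\A^1)^n\times(\A^{0|1})^q$ and the fact that $\underline{\sCart}(\A^{0|1},-)$ preserves finite products, then packages the parity computation as the statement that the even (resp.\ odd) part of $\cO(\A^{m|p}\times\A^{0|1})$ is naturally isomorphic to all of $\cO(\A^{m|p})$. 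Your version does all $n+q$ generators at once and is more explicit about naturality, which the paper leaves implicit; the paper's reduction buys brevity and makes the functoriality essentially automatic, but the underlying calculation is the same.
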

\begin{proof}
Because $\A^{n|q} \cong (\A^1)^{\times n}\times(\A^{0|1})^{q}$, we need only check this on $\A^1$ and $\A^{0|1}$. There are two functors $(-)_{\text{ev}},(-)_{\text{odd}}:\sAlg \lra{} \Set$ given by taking the homogeneous parts. These functors are representable and in fact represented by $\Sect(\A^1)$ and $\Sect(\A^{0|1})$ respectively. 
Now we have
\begin{align*}
\sCart(\A^{m|p},\underline{\sCart}(\A^{0|1},\A^{1})) &\cong \sCart(\A^{m|p}\times \A^{0|1}, \A^1) \\ &\cong \Sect(\A^{m|p}\times \A^{0|1})_{\text{ev}} \\ &\cong \Sect(\A^{m|p}) = \A^{1|1}(\A^{m|p})
\end{align*}
and
\begin{align*}
\sCart(\A^{m|p},\underline{\sCart}(\A^{0|1},\A^{0|1})) &\cong \sCart(\A^{m|p}\times \A^{0|1}, \A^{0|1}) \\ &\cong \Sect(\A^{m|p}\times \A^{0|1})_{\text{odd}} \\& \cong \Sect(\A^{m|p}) = \A^{1|1}(\A^{m|p}). \qedhere
\end{align*}
\end{proof}

\begin{definition}
	Let $D$ be a category. An object $x \in D$ is called {\em compact} if $D(x,-): D \to \Set$ commutes with filtered colimits and called {\em tiny} if $D(x,-): D \to \Set$ commutes with \emph{all} small colimits.  Let $D$ be a cartesian closed category. We call an object $x \in D$ {\em cartesian tiny} if $\underline{D}(x,-): D \to D$ commutes with all small colimits, where $\underline{D}(x,-)$ is the internal hom functor. 
\end{definition}

In a presheaf category the tiny objects are precisely those presheaves which are retracts of representables \cite[Prop.2]{MR850528}. 

\begin{prop} \label{tiny}
	If $C$ is a (small) category with finite products, then every tiny object of $\Pre(C)$ is a cartesian tiny object. 
\end{prop}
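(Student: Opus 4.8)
The plan is to reduce the statement to the description of tiny objects in a presheaf topos recalled just above the proposition: by \cite[Prop.2]{MR850528} a presheaf is tiny if and only if it is a retract of a representable. So I would start from an arbitrary tiny $x \in \Pre(C)$, fix an object $c_0 \in C$ and maps $i \colon x \to y(c_0)$, $r \colon y(c_0) \to x$ with $r i = \mathrm{id}_x$, and aim to show $\underline{\Pre(C)}(x,-) \colon \Pre(C) \to \Pre(C)$ preserves all small colimits.

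The first step is to record a pointwise criterion for cartesian tininess. Since colimits in $\Pre(C)$ are computed objectwise, the evaluation functors $\mathrm{ev}_c \colon \Pre(C) \to \Set$ jointly create small colimits, and hence a functor $\Pre(C) \to \Pre(C)$ preserves small colimits precisely when its composite with each $\mathrm{ev}_c$ does. Applying this to $\underline{\Pre(C)}(x,-)$ and using the defining formula $\underline{\Pre(C)}(x, G)(c) = \Pre(C)(y(c) \times x,\, G)$, one gets: $x$ is cartesian tiny if and only if the presheaf $y(c) \times x$ is tiny for every object $c$ of $C$.

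The second step is where the hypothesis is used. Fix $c \in C$. Tensoring the retraction data $(i,r)$ with $\mathrm{id}_{y(c)}$ exhibits $y(c) \times x$ as a retract of $y(c) \times y(c_0)$. Because $C$ has finite products and the Yoneda embedding preserves all limits that exist in $C$, we have $y(c) \times y(c_0) \cong y(c \times c_0)$, which is representable and therefore tiny; and a retract of a tiny object is tiny, since for any small diagram the comparison map for $\underline{\Pre(C)}$-free... more simply, the colimit-comparison maps for a retract functor are retracts (in the arrow category) of those for the ambient functor, hence isomorphisms. Combining with the criterion of the previous step, $y(c) \times x$ is tiny for all $c$, so $x$ is cartesian tiny, as desired.

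I do not expect a genuine obstacle here; the argument is formal. The one point that takes a little care is the pointwise reduction of cartesian tininess to ordinary tininess of the presheaves $y(c) \times x$ — once that reformulation is in hand, the finite-products assumption is exactly what keeps these presheaves inside the class of retracts of representables, and the conclusion follows immediately from \cite[Prop.2]{MR850528}.
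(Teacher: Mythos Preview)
Your argument is correct and follows essentially the same route as the paper: both reduce, via the pointwise formula $\underline{\Pre(C)}(x,G)(c)=\Pre(C)(y(c)\times x,G)$, to the observation that $y(c)\times x$ is tiny because $C$ has finite products. The paper only writes out the case where $x$ is itself representable (so $y(c)\times y(a)\cong y(c\times a)$ directly), whereas you treat the general tiny case by carrying the retraction along; this makes your version slightly more complete, though the extra step is routine and could equally be justified by citing the characterization of tiny presheaves as retracts of representables.
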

\begin{proof}
We will show this for representable presheaves. 	
Let $a$ and $b$ be objects of $C$ (also viewed as representable objects of $\Pre(C)$) and let $I \lra{} \Pre(C)$ be a small diagram in $\Pre(C)$ mapping $i \in I$ to $x_i$. 

Colimits in $\Pre(C)$ are computed objectwise, and so we can show that the internal hom out of $a$ commutes with arbitrary (small) colimits by evaluation on an object $b$:
\begin{align*}
\underline{\Pre(C)}(a,\Colim{I} \text{ } x_i)(b) &\cong \Pre(C)(a\times b,\Colim{I} \text{ } x_i) \\
&\cong \Colim{I} \text{ } x_i(a\times b) \\
&\cong \Colim{I} \Pre(C)(a \times b,x_i) \\
&\cong \Colim{I} \text{ } \underline{\Pre(C)}(a, x_i)(b). 
\end{align*}
The second isomorphism uses the fact that the object $a \times b \in C$ is representable. 
\end{proof}

\begin{cor-def}\label{cor:right-adjoint-to-innerhom}
	The internal hom $\usCart(\A^{0|1}, -)$ functor admits a further right adjoint, which we denote $\Omega_{(-)}$. \qed
\end{cor-def}

For any \sacs~$Y$, the \sacs~$\Omega_Y$ has an elementary description:
\begin{equation*}
	 \sCart(\A^{n|q}, \Omega_Y) \cong \sCart( \usCart(\A^{0|1}, \A^{n|q}), Y) \cong Y( \A^{n+q| n+q}).
\end{equation*}
In general we will denote the mapping set $\sCart(X, \Omega_Y) =: \Omega(X ; Y)$. The case $Y = \cO$ is especially important for this paper and in this case we will drop the $\cO$ from our notation; $\Omega := \Omega_\cO$ (see also Example~\ref{ex:Omega}). Another example that will be important later is $\Omega(X ; M)$ for $M$ an invertible $\cO$-module. Recall that $M$ may be viewed as an invertible $R$-module due to Theorem~\ref{thm:Picard-equivalence}. In this case we have the isomorphism
\[
\Omega(X ; M) \cong \Omega(X;\cO) \otimes_R M.
\]
To prove this it suffices to check it on representables for which it is clear.

\begin{cor} \label{cor:functions_on_superpoints}
For any \sacs~$X$ there is an isomorphism of supercommutative rings
\[
\Sect(\underline{\sCart}(\A^{0|1}, X)) \cong \sCart(X, \Omega). \qed
\]
\end{cor}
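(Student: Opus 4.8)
The plan is to get the underlying bijection for free from the adjunction of Corollary/Definition~\ref{cor:right-adjoint-to-innerhom}, and then to check that it is a homomorphism of supercommutative rings; the second part carries essentially all the content, though it too is formal. First I would unwind the notation. By definition $\Sect(Y) = \cO(Y) = \sCart(Y,\cO)$, so the left-hand side is $\sCart(\usCart(\A^{0|1},X),\cO)$, and $\Omega$ is by definition $\Omega_\cO$, the value at $\cO$ of the right adjoint $\Omega_{(-)}$ to $\usCart(\A^{0|1},-)$. The adjunction then gives, naturally in $X$ and in the target object,
\[
\sCart(\usCart(\A^{0|1},X),\cO) \;\cong\; \sCart(X,\Omega_\cO) \;=\; \sCart(X,\Omega),
\]
which is the asserted bijection of sets.

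Next I would pin down the two ring structures. Each side is a set of maps into a ring object: $\sCart(-,\cO)$ is ring-valued because $\cO$ is a supercommutative $R$-algebra object (Example~\ref{example:the_ring_opbject_O}), and $\sCart(-,\Omega)$ is ring-valued provided $\Omega$ is one too. To see the latter I would use that $\Omega_{(-)}\colon\sCart\to\sCart$, being a right adjoint, preserves all limits and in particular finite products; it therefore carries a supercommutative $R$-algebra object --- that is, a product-preserving functor out of the Lawvere theory $\sA$ with generic value $\cO$ --- to another such functor, whose generic value is $\Omega_\cO = \Omega$. Along the way this identifies $\Omega$ with the ring object of Example~\ref{ex:Omega}, since $\Omega(\A^{n|q}) = \sCart(\A^{n|q},\Omega_\cO)\cong\sCart(\usCart(\A^{0|1},\A^{n|q}),\cO) = \Sect(\usCart(\A^{0|1},\A^{n|q}))$.

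Finally, the crux: that the displayed bijection respects these structures. I would argue purely from naturality of the adjunction isomorphism in the \emph{target} variable, together with product-preservation of $\Omega_{(-)}$. For each structure map $\theta\colon\cO^{\times k}\to\cO$ of the algebra object $\cO$, naturality produces a commuting square relating postcomposition $\theta_*$ on the $\cO$-side to $(\Omega_\theta)_*$ on the $\Omega$-side, and the fact that $\Omega_{(-)}$ preserves products identifies the two ``$k$-th power'' corners of this square compatibly. Since $\theta_*$ is precisely the corresponding operation on $\Sect(\usCart(\A^{0|1},X))$ and $(\Omega_\theta)_*$ the corresponding operation on $\sCart(X,\Omega)$, the bijection intertwines every operation, hence is an isomorphism of supercommutative $R$-algebras.

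I expect the only real obstacle to be conceptual bookkeeping rather than computation: being careful that ``$\Omega$ is a ring object'' really names the ring structure implicit in the statement of the corollary, that $\Omega_{(-)}$ genuinely respects all the algebraic operations in play (which it does, being a right adjoint and so product-preserving), and that the naturality square intertwines the multiplications and not merely the underlying set maps. No explicit calculation is needed once the adjunction and product-preservation are in hand.
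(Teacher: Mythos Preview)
Your proposal is correct and follows exactly the approach the paper intends: the paper simply marks the corollary with a \qed, treating it as an immediate consequence of the adjunction in Corollary/Definition~\ref{cor:right-adjoint-to-innerhom} together with the identification $\Omega = \Omega_\cO$ and the ring-object structure on $\Omega$ from Example~\ref{ex:Omega}. You have spelled out carefully why the adjunction bijection is a ring homomorphism (via product-preservation of the right adjoint $\Omega_{(-)}$ and naturality in the target), which the paper leaves entirely implicit; this is more detail than the paper provides, but it is the same underlying argument.
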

	



\section{The Action Of The Endomorphisms Of The Super Point} \label{sec:endos}

The \sacs~$\uEnd(\A^{0|1})$ is an internal monoid and consequently $\cO(\uEnd(\A^{0|1}))$ is a coalgebra. We begin this section by describing this coalgebra explicitly with generators and relations. After this we describe it qualitatively by showing that a coaction by this coalgebra on a supercommutative ring is the same information as a connective super cdga structure on the ring. This is a supercommutative algebra which is equipped with an additional grading by the natural numbers together with an odd, degree-one differential. These results are, to a large extent, well-known, and have appeared in a variety of guises throughout the literature. A very general and closely related version appears in the context of super Fermat theories \cite{Carchedi:2012aa}.  Our treatment is heavily influenced by \cite{Stolz-lecture_notes} and \cite{MR2763085}.

The \sacs~$\uEnd(\A^{0|1})$ of endomorphisms of the superpoint (a monoid object) naturally acts on the internal mapping \sacs~$\usCart(\A^{0|1}, X)$ for any \sacs~$X$. In the second part of this section we provide conditions on $X$ under which this action leads to a coaction by $\cO(\uEnd(\A^{0|1}))$. These results do not seem to have appeared in the literature.

The most direct approach to the monoid structure on $\uEnd(\A^{0|1})$ is via the $S$-point formalism. Here $S \in \sA$ is some unspecified representable \sacs. The $S$-points of $\uEnd(\A^{0|1}) = \usCart(\A^{0|1}, \A^{0|1})$ are, by construction, the maps $S \times \A^{0|1} \to \A^{0|1}$. This, in turn, is equivalent to a map $S \times \A^{0|1} \to S \times \A^{0|1}$, which commutes with the projection to $S$. This latter description is convenient, as the monoid structure on $\uEnd(\A^{0|1})$ is given by composition. Since $S$ and $S \times \A^{0|1}$ are representable, we may equivalently describe such data by passing to the rings of global functions. We see that an $S$-point of $\uEnd(\A^{0|1})$ is given by an $\cO(S)$-algebra map
\begin{align*}
	\cO(S)[\varepsilon] &\to \cO(S)[\varepsilon] \\
	\varepsilon & \mapsto s_{\text{ev}} \cdot \varepsilon + s_{\text{odd}}.
\end{align*}
where $s_{\text{ev}}$ and $s_{\text{odd}}$ are even, respectively odd, elements of $\cO(S)$. 
This description makes explicit the identification $\underline{\sCart}(\A^{0|1}, \A^{0|1}) \cong \A^{1|1}$ from Lemma \ref{lma:affine}. Moreover we have 
\begin{align*}
	&\cO(\underline{\sCart}(\A^{0|1}, \A^{0|1}) \times \underline{\sCart}(\A^{0|1}, \A^{0|1}))  \\ & \cong \cO(\underline{\sCart}(\A^{0|1}, \A^{0|1})) \otimes_R \cO(\underline{\sCart}(\A^{0|1}, \A^{0|1}))
\end{align*}
and hence the monoid structure of $\underline{\sCart}(\A^{0|1}, \A^{0|1})$ induces a comonoid structure for $\cO(\underline{\sCart}(\A^{0|1}, \A^{0|1}))$.

It follows immediately from the formula for composition of affine transformations in one variable that the global functions of the multiplication map for the monoid $\uEnd(\A^{0|1})$ are given by the map
\begin{align*}
	R[x,\e] & \lra{m^*} R[x_1,x_2,\e_1,\e_2] \\
	x & \mapsto x_1x_2 \\
	\e & \mapsto \e_1+x_1\e_2.
\end{align*}

This implies:
\begin{prop}
There is an isomorphism of monoidal \sacss
\[
\uEnd(\A^{0|1}) 
\cong \A^{0|1} \rtimes \A^1,
\]
where $\A^{1}$ acts on $\A^{0|1}$ by scalar multiplication.
\end{prop}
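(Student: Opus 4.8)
The plan is to unwind the semidirect product claim directly from the explicit comultiplication formula just computed, so that the proposition is really a matter of recognizing a familiar algebraic structure rather than proving something new. Recall that $\A^{0|1} \rtimes \A^1$ denotes the semidirect product of monoids where $\A^1$ (under multiplication) acts on $\A^{0|1}$ (under addition) by scalar multiplication; its underlying \sacs~is $\A^{0|1} \times \A^1 \cong \A^{1|1}$, and we have already identified $\uEnd(\A^{0|1}) \cong \A^{1|1}$ via Lemma~\ref{lma:affine}. So the content is purely about the monoid structure.

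First I would recall the $S$-point description already established: an $S$-point of $\uEnd(\A^{0|1})$ is an $\cO(S)$-algebra endomorphism of $\cO(S)[\e]$ sending $\e \mapsto s_{\text{ev}}\cdot \e + s_{\text{odd}}$, i.e.\ a pair $(s_{\text{odd}}, s_{\text{ev}})$ with $s_{\text{odd}}$ odd and $s_{\text{ev}}$ even in $\cO(S)$. Composition of two such maps, $\e \mapsto s_{\text{ev}}\e + s_{\text{odd}}$ followed by $\e \mapsto t_{\text{ev}}\e + t_{\text{odd}}$, sends $\e \mapsto t_{\text{ev}}(s_{\text{ev}}\e + s_{\text{odd}}) + t_{\text{odd}} = (t_{\text{ev}}s_{\text{ev}})\e + (t_{\text{odd}} + t_{\text{ev}}s_{\text{odd}})$. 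Reading off the pairs, the product is $(t_{\text{odd}}, t_{\text{ev}})\cdot(s_{\text{odd}}, s_{\text{ev}}) = (t_{\text{odd}} + t_{\text{ev}} s_{\text{odd}},\; t_{\text{ev}} s_{\text{ev}})$. This is precisely the multiplication on the set underlying $\A^{0|1}\rtimes\A^1$, where $\A^1$ multiplies on itself, acts on $\A^{0|1}$ by scaling, and the odd coordinates add with a twist. Since composition is natural in $S$ and the identification is compatible with products of representables (the tensor formula for $\cO$ on a product, already used above), this gives a natural isomorphism of monoid objects, hence of monoidal \sacss. I would also note that dually this is exactly the comultiplication $R[x,\e]\to R[x_1,x_2,\e_1,\e_2]$, $x\mapsto x_1x_2$, $\e\mapsto \e_1 + x_1\e_2$ displayed just before the proposition, which is visibly the coalgebra structure on functions of a semidirect product: $x$ is grouplike-up-to-nothing (it comultiplies as in the multiplicative monoid $\A^1$), and $\e$ comultiplies with the cross term $x_1\e_2$ recording the action.

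There is essentially no obstacle here; the only thing requiring a word of care is bookkeeping about which factor acts on which and in what order, i.e.\ making sure the convention for $\rtimes$ matches the order of composition chosen in the $S$-point picture (one could equally well get $\A^1 \ltimes \A^{0|1}$ with the opposite convention, which is the isomorphic monoid written the other way, as in the smooth analogue $\R^\times \ltimes \R^{0|1}$ of \cite{MR2763085}). I would therefore close by explicitly pinning down the convention: define the action $\A^1 \times \A^{0|1} \to \A^{0|1}$ on $S$-points by $(\lambda, a) \mapsto \lambda a$, form the semidirect product monoid on $\A^{0|1}\times\A^1$ with multiplication $(a,\lambda)(b,\mu) = (a + \lambda b, \lambda\mu)$, and observe that $(s_{\text{odd}}, s_{\text{ev}}) \mapsto (\e \mapsto s_{\text{ev}}\e + s_{\text{odd}})$ is an isomorphism of monoids intertwining this with composition, functorially in $S$; uniqueness of the monoidal structure being transported then finishes the argument.
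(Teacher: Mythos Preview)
Your proposal is correct and takes essentially the same approach as the paper: the paper simply states that the proposition follows immediately from the displayed formula $m^*\colon x\mapsto x_1x_2,\ \e\mapsto \e_1+x_1\e_2$, and your argument is precisely the unwinding of why that formula is the semidirect product structure. If anything, you supply more detail than the paper does (the explicit $S$-point composition and the care about conventions), but the underlying idea is identical.
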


\begin{definition}\label{def:supercdga}
	A {\em supercommutative differential graded algebra} (super cdga) is a supercommutative algebra $A$ equipped with
	\begin{itemize}
		\item a {\em grading}, i.e. a collection of $R$-module direct summands $A_n \subseteq A$ for each $n \in \Z$ such that $A_p \cdot A_q \subseteq A_{p+q}$, and as $R$-modules $A \cong \bigoplus_n A_n$; and
		\item a {\em differential}, i.e. an odd derivation $d: A \to A$, which squares to zero, $d^2 = 0$. 
	\end{itemize}
	We require that the derivation has {\em degree one}, which means $d(A_p) \subseteq A_{p+1}$. A super cdga is {\em connective} if $A_p = 0$ for $p < 0$. 
	
	A {\em weakly graded super cdga} $B$ is defined identically to a super cdga except that we require $B \cong \prod_n B_n$, the direct product of isotypical factors, rather than the direct sum. 
\end{definition}

\begin{prop} \label{cdgastructure}
Let $A$ be a supercommutative algebra (such as $\Sect(Y)$). A coaction by $\Sect(\A^{0|1} \rtimes \A^1)$ is equivalent to a connective super cdga structure on $A$. 
\end{prop}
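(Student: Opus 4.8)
The plan is to make the coaction completely explicit in the coordinates $x,\e$ and read off the cdga data directly. Recall from above that the comonoid $\Sect(\A^{0|1}\rtimes\A^1)=R[x,\e]$ has comultiplication $x\mapsto x_1x_2$, $\e\mapsto\e_1+x_1\e_2$ and counit $x\mapsto 1$, $\e\mapsto 0$ (the latter picking out the identity affine transformation). A coaction of this comonoid on the supercommutative algebra $A$ means an algebra homomorphism $\rho\colon A\to A\otimes_R R[x,\e]=A[x,\e]$ satisfying coassociativity and the counit axiom; the hypothesis that $\rho$ is an \emph{algebra} map is exactly what will produce the Leibniz rule. Since $A[x,\e]=A[x]\oplus A[x]\cdot\e$, write $\rho(a)=\sum_{n\ge 0}p_n(a)x^n+\sum_{n\ge 0}q_n(a)x^n\e$ with $R$-linear maps $p_n\colon A\to A$ parity preserving and $q_n\colon A\to A$ parity reversing, only finitely many nonzero for each $a$.

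First I would unpack coassociativity $(\rho\otimes\mathrm{id})\rho=(\mathrm{id}\otimes m^*)\rho$ by comparing, in $A[x_1,x_2,\e_1,\e_2]$, the coefficients of the four families of monomials $x_1^mx_2^n$, $x_1^mx_2^n\e_1$, $x_1^mx_2^n\e_2$, $x_1^mx_2^n\e_1\e_2$. Together with the counit axiom $\sum_n p_n=\mathrm{id}$, the pure-$x$ coefficients give $p_mp_n=\delta_{mn}p_n$, so the $p_n$ are orthogonal idempotents summing to the identity and $A=\bigoplus_n A_n$ with $A_n:=p_n(A)$; multiplicativity of $\rho$ gives $A_p\cdot A_q\subseteq A_{p+q}$, and the decomposition is a genuine direct sum because $\rho(a)$ is a polynomial in $x$. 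The two mixed families give $q_mp_n=\delta_{mn}q_n$ and $p_mq_n=\delta_{m,n+1}q_n$, i.e. $q_n$ annihilates $A_m$ for $m\ne n$ and carries $A_n$ into $A_{n+1}$; hence $d:=\sum_n q_n$ is an odd $R$-linear endomorphism of degree one, and on a homogeneous $a\in A_n$ the coaction takes the normal form $\rho(a)=(a+d(a)\e)x^n$. Finally the $\e_1\e_2$-family gives $q_mq_n=0$ for all $m,n$, hence $d^2=0$; and expanding the multiplicativity identity $\rho(ab)=\rho(a)\rho(b)$ on the normal form (using $\e^2=0$ and the Koszul sign from commuting $\e$ past odd elements of $A$) yields the Leibniz rule, so $(A,\{A_n\},d)$ is a \emph{connective} super cdga — connective because the exponents of $x$ are $\ge 0$.

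For the converse I would run the same computation backwards: given a connective super cdga $(A,\{A_n\},d)$, define $\rho(a):=(a+d(a)\e)x^n$ for $a\in A_n$ and extend $R$-linearly (well defined since $A=\bigoplus_n A_n$). Then $\rho$ is an algebra map by the Leibniz rule together with $\e^2=0$, the counit axiom holds by inspection, and a short computation shows coassociativity is equivalent to the two facts $d(A_n)\subseteq A_{n+1}$ and $d^2=0$ — indeed $d^2=0$ is precisely what forces the $\e_1\e_2$-term of $(\rho\otimes\mathrm{id})\rho(a)$ to vanish, matching the fact that $(\mathrm{id}\otimes m^*)\rho(a)$ has no such term. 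The two assignments are manifestly mutually inverse, giving the claimed equivalence.

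I do not anticipate a conceptual obstacle; the content is bookkeeping, and a closely related statement is well known (see the discussion at the start of this section). The one point requiring genuine care is signs: the tensor product $A\otimes_R R[x,\e]$ is a super tensor product, $\e$ must be commuted past odd elements of $A$, and the Leibniz identity that emerges from multiplicativity of $\rho$ is a priori the right-handed one $d(ab)=a\,(db)+(-1)^{|b|}(da)\,b$; one should observe that rescaling $d$ by the parity sign $(-1)^{|a|}$ converts this into the left-handed odd-derivation convention of Definition~\ref{def:supercdga} while preserving both $d^2=0$ and the degree. A secondary point worth spelling out is why the grading is a direct sum rather than a direct product: this is exactly the distinction between the super cdgas and the weakly graded super cdgas of Definition~\ref{def:supercdga}, and it is forced by $\rho(a)$ being a polynomial (not a power series) in $x$ for every $a$.
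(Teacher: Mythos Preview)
Your proof is correct and complete; in fact you spell out several points (the converse, the Leibniz rule, the direct-sum-vs-product issue) more carefully than the paper does. The main organizational difference is that the paper exploits the semidirect product decomposition $\A^{0|1}\rtimes\A^1$: it restricts the coaction along the two sections $\A^1\hookrightarrow\A^{0|1}\rtimes\A^1\hookleftarrow\A^{0|1}$ to read off the grading and the differential separately, and then uses coassociativity only to check that the twisting forces $d$ to have degree $+1$. Your approach instead writes out the full coaction and extracts \emph{all} the axioms (orthogonal idempotents, $d$ of degree $+1$, $d^2=0$, Leibniz) simultaneously by comparing coefficients of the four monomial families in $x_1,x_2,\e_1,\e_2$. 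The paper's route is slightly more conceptual and modular (it makes transparent which piece of the structure comes from which factor, and generalizes immediately to the other submonoids considered later), whereas yours is more self-contained and does not require invoking the separate folklore equivalences ``$R[x]$-coaction $=$ $\mathbb N$-grading'' and ``$R[\e]$-coaction $=$ odd differential'' as black boxes. Your careful remark about the sign convention for the Leibniz rule is a genuine subtlety that the paper passes over.
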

\begin{proof}
The semidirect product $\A^{0|1} \rtimes \A^1$ admits a canonical section
\[
\A^{0|1} \lra{} \A^{0|1} \rtimes \A^1 \longleftarrow \A^1,
\]
which allows us to break the action into its constituent parts.

A coaction of $\Sect(\A^1) \cong R[x]$ is equivalent to a connective grading by $\mathbb{N}$, with the elements $a \in A$ of degree $k$ being those where the coaction map sends $a \mapsto a\otimes x^k$. Here by a grading we mean, as above, that $A \cong \oplus A_n$, the direct sum of factors, and not the direct product.  

A coaction of $\Sect(\A^{0|1})$ is equivalent to an odd differential. Again, for $a \in A$ the coaction map sends $a \mapsto a + a_1\e$. We set $da = a_1$. The fact that $d$ is a differential follows from the associativity of the action.

Now we combine these actions in a twisted way in $\A^{0|1} \rtimes \A^1$. We can check that this tells us that the differential increases degree. The associativity diagram for the coaction is the following:
\begin{center}
\begin{tikzpicture}
	\node (LT) at (0, 1.5) {$A$};
	\node (LB) at (0, 0) {$A \otimes \Sect(\A^{0|1} \rtimes \A^1)$};
	\node (RT) at (6, 1.5) {$A\otimes \Sect(\A^{0|1} \rtimes \A^1)$};
	\node (RB) at (6, 0) {$A\otimes \Sect(\A^{0|1} \rtimes \A^1) \otimes \Sect(\A^{0|1} \rtimes \A^1)$.};
	\draw [->] (LT) -- node [left] {$\mu^*$} (LB);
	\draw [->] (LT) -- node [above] {$\mu^*$} (RT);
	\draw [->] (RT) -- node [right] {$1 \otimes m^*$} (RB);
	\draw [->] (LB) -- node [below] {$\mu^* \otimes 1$} (RB);
\end{tikzpicture}
\end{center}
Now if $a \in A$ is degree $k$, we have
\begin{center}
\begin{tikzpicture}
	\node (LT) at (0, 1.5) {$a$};
	\node (RT) at (4.5, 1.5) {$ax^k+(da)x^k\e$};
	\node (RB) at (4.5, 0) {$ax_{1}^{k}x_{2}^{k}+(da)x_{1}^{k}x_{2}^{k}\e_1 - (da)x_{1}^{k+1}x_{2}^{k}\e_2$};
	\draw [|->] (LT) -- node [above] {$ $} (RT);
	\draw [|->] (RT) -- node [right] {$ $} (RB);
\end{tikzpicture},
\end{center}
which is completely determined by the formula for $m^*$. Going around the other way we discover that there must be an equality $\mu^*(da) = (da)x^{k+1}$. That is, the differential increases degree by $1$.
\end{proof}


\begin{cor}
There is an equivalence of categories between the category of supercommutative algebras with coactions by $\Sect(\A^{0|1} \rtimes \A^1)$ and the category of super cdga's. \qed
\end{cor}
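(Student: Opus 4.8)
The plan is to upgrade the equivalence of Proposition~\ref{cdgastructure} from a statement about objects to a statement about morphisms, and then observe that the two functors exhibited are mutually inverse. Concretely, Proposition~\ref{cdgastructure} already establishes a bijection between coactions of $\Sect(\A^{0|1} \rtimes \A^1)$ on a fixed supercommutative algebra $A$ and connective super cdga structures on $A$, and the proof exhibits the dictionary in both directions: from a coaction $\mu^*$ one reads off the grading (the $x$-eigenspace decomposition) and the differential ($da = a_1$ where $\mu^*(a) = a + a_1\e + \dots$), and conversely a grading together with a degree-one square-zero odd derivation determines $\mu^*$ by the formula displayed in the proof. So the first step is to name these two assignments as functors $\Phi$ and $\Psi$ between the category $\mathrm{Coact}_{\Sect(\A^{0|1}\rtimes\A^1)}$ of supercommutative algebras equipped with a coaction (with morphisms the coaction-preserving algebra maps) and the category of connective super cdgas (with morphisms the grading- and differential-preserving algebra maps).

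Second, I would check that $\Phi$ and $\Psi$ are genuinely functorial, i.e. that they act correctly on morphisms. This is the only content beyond Proposition~\ref{cdgastructure}, but it is essentially immediate: a morphism $f\colon A \to A'$ of coaction algebras is by definition an algebra map with $\mu'^* \circ f = (f \otimes 1) \circ \mu^*$; applying this identity and comparing coefficients of $1$, $\e$, $x^k$, $x^k\e$ shows at once that $f$ respects the eigenspace decomposition (hence the grading) and intertwines the differentials ($f(da) = d'f(a)$). Conversely a map of super cdgas is by definition compatible with grading and differential, and since $\mu^*$ is built functorially out of exactly those two pieces of data via the displayed formula, such a map automatically commutes with the coaction maps. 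Thus $\Phi$ and $\Psi$ are well-defined functors, and on objects they are inverse bijections by Proposition~\ref{cdgastructure}; since on morphisms each simply reinterprets the same underlying algebra map, $\Phi \circ \Psi = \mathrm{id}$ and $\Psi \circ \Phi = \mathrm{id}$ on the nose, giving an isomorphism (a fortiori an equivalence) of categories.

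Since there is really no hard step here — the proposition does all the analytic work, and the corollary is a bookkeeping statement — I expect the only mild subtlety to be making sure the notion of morphism on each side is the ``obvious'' one and that the compatibility is literally an equality of algebra maps rather than merely a natural iso; once that is pinned down the verification on morphisms is a one-line coefficient comparison. Accordingly I would keep the write-up short:

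\begin{proof}
	By Proposition~\ref{cdgastructure} the assignment sending a coaction $\mu^*$ of $\Sect(\A^{0|1}\rtimes\A^1)$ on a supercommutative algebra $A$ to the grading and differential described there, and the inverse assignment sending a connective super cdga structure to the coaction given by the formula in that proof, are mutually inverse bijections on objects. It remains to see these assignments are functorial for the evident morphisms, namely coaction-preserving algebra maps on one side and grading- and differential-preserving algebra maps on the other. Given an algebra map $f\colon A \to A'$, the condition $\mu'^{*}\circ f = (f\otimes 1)\circ\mu^{*}$ is, upon extracting the coefficients of $x^{k}$ and of $x^{k}\e$ in $\Sect(\A^{0|1}\rtimes\A^1) \cong R[x,\e]$, exactly the conjunction of the conditions that $f$ carry the degree-$k$ summand of $A$ into that of $A'$ and that $f\circ d = d'\circ f$. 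Hence $f$ respects coactions if and only if it respects the associated super cdga structures, so each assignment carries morphisms to morphisms and they remain mutually inverse as functors. This is the asserted equivalence of categories.
\end{proof}
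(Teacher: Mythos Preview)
Your proposal is correct and simply spells out what the paper leaves implicit: the paper gives no proof at all (just \qed), treating the corollary as an immediate consequence of Proposition~\ref{cdgastructure}. Your coefficient-comparison argument showing that coaction-preserving is equivalent to grading- and differential-preserving is exactly the routine check the paper omits; note only that the corollary as stated says ``super cdga's'' while the proposition (and your proof) correctly pin this down to \emph{connective} super cdgas.
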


Now we calculate the effect of $\cO(-)$ on the action map
\[
\mu:\uEnd(\A^{0|1}) \times \usCart(\A^{0|1},\A^{n|q}) \lra{} \usCart(\A^{0|1},\A^{n|q}).
\]
Note that, since everything involved is affine, this gives a coaction of $\cO(\uEnd(\A^{0|1}))$ on $\cO(\usCart(\A^{0|1},\A^{n|q}))$. We write
\[
\cO(\usCart(\A^{0|1},\A^{n|q})) \cong R[x_1,\ldots,x_n,\be_1,\ldots,\be_q,\bx_1,\ldots,\bx_n,\e_1,\ldots,\e_q],
\]
where $x_i$ and $\be_i$ are even and $\bx_i$ and $\e_i$ are odd. We use this notation because $\bx_i$ is induced by $x_i \in \cO(\A^{n|q})$ and $\be_i$ is induced by $\e_i \in \cO(\A^{n|q})$.

\begin{prop} \label{prop:calc}
The coaction of $\cO(\uEnd(\A^{0|1})) \cong R[x,\e]$ on $\cO(\usCart(\A^{0|1},\A^{n|q}))$ maps
\begin{align*}
x_i &\mapsto x_i+\bx_i\e \\
\be_i &\mapsto \be_ix \\
\bx_i &\mapsto \bx_ix \\
\e_i &\mapsto \e_i+\be_i\e.
\end{align*}
\end{prop}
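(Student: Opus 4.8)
The plan is to compute the action map $\mu$ directly in the $S$-point formalism, exactly as was done above for the comultiplication $m^{*}$, and then to read off $\mu^{*}$ by Yoneda: all of $\usCart(\A^{0|1},\A^{n|q})$, $\uEnd(\A^{0|1})$ and their product are representable (Lemma~\ref{lma:affine}), so the coaction is the unique $R$-algebra map $\cO(\usCart(\A^{0|1},\A^{n|q}))\to\cO(\uEnd(\A^{0|1}))\otimes_{R}\cO(\usCart(\A^{0|1},\A^{n|q}))$ which induces $\mu$ on $S$-points.

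First I would fix coordinates via the $S$-point formalism. For a representable $S$, write $\cO(S\times\A^{0|1})=\cO(S)[\delta]$ with $\delta$ odd and $\delta^{2}=0$. By Example~\ref{ex:innerhom} together with Lemma~\ref{lma:affine}, an $S$-point of $M:=\usCart(\A^{0|1},\A^{n|q})$ is an $\cO(S)$-algebra map $\cO(\A^{n|q})=R[x_{1},\dots,x_{n},\e_{1},\dots,\e_{q}]\to\cO(S)[\delta]$, necessarily of the form $x_{i}\mapsto x_{i}+\bx_{i}\delta$ and $\e_{i}\mapsto\e_{i}+\be_{i}\delta$ with $x_{i},\be_{i}\in\cO(S)$ even and $\bx_{i},\e_{i}\in\cO(S)$ odd; this recovers the identification $M\cong\A^{n+q|n+q}$ and fixes the names of the coordinate functions on $M$ exactly as in the notation set just before the statement (so that $\bx_{i}$ is the $\delta$-linear coefficient arising from $x_{i}\in\cO(\A^{n|q})$, and $\be_{i}$ the one arising from $\e_{i}$). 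Likewise, as recalled just before the statement, an $S$-point of $E:=\uEnd(\A^{0|1})$ is the $\cO(S)$-algebra endomorphism of $\cO(S)[\delta]$ sending $\delta\mapsto x\delta+\e$ with $x\in\cO(S)$ even and $\e\in\cO(S)$ odd.

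Next I would unwind the action. The monoid $E$ acts on $M$ by precomposition, so on $S$-points $\mu(\phi,f)=f\circ(\mathrm{pr}_{S},\phi)$, and hence $\mu(\phi,f)^{*}$ is the composite
\[
\cO(\A^{n|q})\xrightarrow{f^{*}}\cO(S)[\delta]\xrightarrow{(\mathrm{pr}_{S},\phi)^{*}}\cO(S)[\delta],
\]
where the second map is the identity on $\cO(S)$ and $\delta\mapsto x\delta+\e$. Substituting gives
\begin{align*}
x_{i}&\longmapsto x_{i}+\bx_{i}\delta\longmapsto x_{i}+\bx_{i}(x\delta+\e)=(x_{i}+\bx_{i}\e)+(\bx_{i}x)\delta,\\
\e_{i}&\longmapsto \e_{i}+\be_{i}\delta\longmapsto \e_{i}+\be_{i}(x\delta+\e)=(\e_{i}+\be_{i}\e)+(\be_{i}x)\delta.
\end{align*}
By the first step, the $\delta$-constant and $\delta$-linear coefficients of such a map are precisely the values at the point $\mu(\phi,f)$ of the coordinate functions $x_{i},\bx_{i}$ (respectively $\e_{i},\be_{i}$) on $M$; reading these off and using Yoneda, $\mu^{*}$ is the algebra map with $x_{i}\mapsto x_{i}+\bx_{i}\e$, $\bx_{i}\mapsto\bx_{i}x$, $\e_{i}\mapsto\e_{i}+\be_{i}\e$, $\be_{i}\mapsto\be_{i}x$, where the right-hand sides are read inside $\cO(E)\otimes_{R}\cO(M)=R[x,\e]\otimes_{R}R[x_{1},\dots,\be_{q},\bx_{1},\dots,\e_{q}]$ with $x,\e$ in the left tensor factor. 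This is exactly the assertion.

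The computation is essentially bookkeeping; the only point that needs attention — and the nearest thing to an obstacle — is keeping track of parities, i.e. checking that each image lands in the correct homogeneous part ($\bx_{i}\e$ is even, $\bx_{i}x$ and $\be_{i}x$ are odd, $\be_{i}\e$ is odd) and that no Koszul signs intervene when an odd element is moved past $\delta$. Writing every monomial in a fixed left-to-right order, as above, makes this immediate.
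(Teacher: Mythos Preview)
Your argument is correct and is essentially the paper's own proof: both compute the action on $S$-points as the composite $\cO(\A^{n|q})\to\cO(S)[\delta]\to\cO(S)[\delta]$ given by substituting $\delta\mapsto x\delta+\e$, and then read off the coaction by Yoneda. The only cosmetic difference is that the paper reduces to the factors $\A^{1}$ and $\A^{0|1}$ separately while you handle all generators at once; note also a small slip in your parity bookkeeping ($\be_{i}x$ is even, not odd), though this does not affect the argument.
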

\begin{proof}
Since $\A^{n|q} \cong (\A^1)^n\times (\A^{0|1})^q$ it suffices to check this on $\A^1$ and $\A^{0|1}$. We prove this for $\A^1$, the case of $\A^{0|1}$ having already been treated during our explicit description of the coalgebra structure of $\cO(\uEnd(\A^{0|1}))$. Let $T$ be a $\Z/2$-graded commutative $R$-algebra. Using the functor of points a map
\[
R[x,\e] \lra{} T
\]
mapping $x \mapsto t_x$ and $\e \mapsto t_{\e}$ corresponds to the map
\[
T[\e] \lra{} T[\e]: \e \mapsto t_x\e + t_{\e}.
\] 
Now $\cO(\usCart(\A^{0|1},\A^1)) \cong R[x_1,\bx_1]$. A map
\[
R[x_1, \bx_1] \lra{} T
\]
mapping $x_1 \mapsto t_{x_1}$ and $\bx_1 \mapsto t_{\bx_1}$ corresponds to the map
\[
T[x] \lra{} T[\e]: x \mapsto t_{x_1} + t_{\bx_1} \e.
\]
Composing these gives the map
\[
T[x] \lra{} T[\e]: x \mapsto (t_{x_1}+t_{\bx_1}t_{\e}) + t_{\bx_1}t_x \e.
\]
Thus the coaction is the map
\[
R[x_1,\bx_1] \lra{} R[x,\e]\otimes_R R[x_1,\bx_1]
\]
mapping $x_1 \mapsto x_1+\bx_1\e$ and $\bx_1 \mapsto \bx_1x$.
\end{proof}
The super cdga structure on $\cO(\usCart(\A^{0|1},\A^{n|q}))$ thus has $x_i$ even of degree $0$, $\bx_i$ odd of degree $1$, $\e_i$ odd of degree $0$, and $\be_i$ even of degree $1$.

Now let $Y$ be a \sacs~with an action of $\uEnd(\A^{0|1})$:
\[
\uEnd(\A^{0|1}) \times Y \lra{\mu} Y.
\]
Applying global functions gives a map
\begin{equation*}
	\mu^*: \cO(Y) \to \cO(\uEnd(\A^{0|1})  \times Y).
\end{equation*}
When $Y$ is representable, the codomain decomposes as a tensor product 
\[
\cO(\uEnd(\A^{0|1})  \times Y) \cong \cO(\uEnd(\A^{0|1})) \otimes_R \cO(Y),
\]
and hence in this case $\Sect(Y)$ becomes a supercomodule for the supercoalgebra $R[x,\e]$ described above. In general taking global functions fails to turn products into tensor products, and so we would not generally expect such a coaction. We begin by analyzing the general case in order to see just how bad things can get. Then we show that for a \sacs~$X$ that is a finite colimit of representables there is a genuine coaction on 
\[
\cO(\usCart(\A^{0|1},X)).
\]
Also we show that in the case when there exists $N \in \N$ such that $X$ is a colimit of representables of the form $\A^{n|0}$ where $n < N$ there is an induced coaction on $\cO(\usCart(\A^{0|1},X))$.


Let $Y$ be a \sacs~with an action by $\uEnd(\A^{0|1})$. Then $Y = \Colim{I} \text{ } \A^{n|q}$ and we have the sequence of isomorphisms
\begin{align*}
\cO(\uEnd(\A^{0|1}) \times Y) &\cong \cO(\uEnd(\A^{0|1}) \times \Colim{I} \text{ } \A^{n|q}) \\
&\cong \cO(\Colim{I} \text{ }(\uEnd(\A^{0|1}) \times  \A^{n|q})) \\
&\cong \Lim{I} \text{ }\cO((\uEnd(\A^{0|1}) \times  \A^{n|q})).
\end{align*}

Since the action map $\mu$ is natural in $Y$ and $\Sect(\A^{n|q})$ admits an coaction map, 
this implies that the map of rings
\[
\cO(Y) \lra{\mu^*} \cO(\uEnd(\A^{0|1}) \times Y)
\]
is a limit of coaction maps
\[
\Lim{I} \text{ }\cO(\A^{n|q}) \lra{} \Lim{I} \text{ }\cO((\uEnd(\A^{0|1}) \times  \A^{n|q})).
\]
Thus Proposition \ref{prop:calc} provides a formula for this map. An element of the ring $\cO(Y)$ is a compatible family of polynomials in $\cO(\A^{n|q})$ as $n$ and $q$ vary. Let $\mathbf{x} = \{x_1, \ldots, x_n\}$ and $\boldsymbol{\e} = \{\e_1,\ldots,\e_q\}$. If $(f_i(\mathbf{x},\boldsymbol{\e}))_{i \in I}$ is a compatible family then 
\[
\mu^*((f_i(\mathbf{x},\boldsymbol{\e}))_{i \in I}) = (\mu^*f_i(\mathbf{x},\boldsymbol{\e}))_{i \in I}, 
\]
where the $\mu^*$ on the right is the coaction of $\cO(\uEnd(\A^{0|1}))$ on $\cO(\A^{n|q})$.

Now we explain two cases in which the limit of coaction maps induces a coaction by $\cO(\uEnd(\A^{0|1}))$. 

\begin{prop}
Let $X = \Colim{I} \text{ } \A^{n|q}$ where $I$ is a finite category. Then there is an isomorphism
\[
\cO(\uEnd(\A^{0|1}))\otimes_R \cO(\usCart(\A^{0|1},X)) \cong \cO(\uEnd(\A^{0|1}) \times \usCart(\A^{0|1},X)).
\]
This implies that the ring of functions applied to the action map gives a coaction for ``finite" \sacss.
\end{prop}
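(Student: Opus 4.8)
The plan is to express both sides of the claimed isomorphism as a finite limit over $I$ of the corresponding statement for representables — where it reduces to the fact recorded just before the proposition, that $\cO$ turns a product of representables into a tensor product over $R$ — and then to commute the tensor factor $\cO(\uEnd(\A^{0|1}))$ past that finite limit using flatness. The only non-formal ingredient is this last step, and I expect it to be the main obstacle.

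First I would use that $\usCart(\A^{0|1}, -)$ and $\uEnd(\A^{0|1}) \times (-)$ both preserve colimits: the former has the right adjoint $\Omega_{(-)}$ of Corollary/Definition~\ref{cor:right-adjoint-to-innerhom} (equivalently, $\A^{0|1}$ is representable, hence tiny, hence cartesian tiny by Proposition~\ref{tiny}), and the latter is a left adjoint since $\sCart$ is cartesian closed (Example~\ref{ex:innerhom}). Writing $X \cong \Colim{I} \A^{n|q}$, this gives
\begin{align*}
	\usCart(\A^{0|1}, X) &\cong \Colim{I} \usCart(\A^{0|1}, \A^{n|q}), \\
	\uEnd(\A^{0|1}) \times \usCart(\A^{0|1}, X) &\cong \Colim{I} \bigl( \uEnd(\A^{0|1}) \times \usCart(\A^{0|1}, \A^{n|q}) \bigr).
\end{align*}
Because $\cO$ carries colimits of \sacss~to limits of supercommutative $R$-algebras (Example~\ref{ex:superspec}), applying $\cO$ rewrites $\cO(\usCart(\A^{0|1}, X))$ as $\Lim{I} \cO(\usCart(\A^{0|1}, \A^{n|q}))$ and the right-hand term of the claim as $\Lim{I} \cO\bigl(\uEnd(\A^{0|1}) \times \usCart(\A^{0|1}, \A^{n|q})\bigr)$.

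Next I would settle the representable case. By Lemma~\ref{lma:affine} each $\usCart(\A^{0|1}, \A^{n|q}) \cong \A^{n+q|n+q}$ is representable, so — as recorded just before the proposition — the canonical comparison map
\[
	\cO(\uEnd(\A^{0|1})) \otimes_R \cO(\usCart(\A^{0|1}, \A^{n|q})) \longrightarrow \cO\bigl(\uEnd(\A^{0|1}) \times \usCart(\A^{0|1}, \A^{n|q})\bigr)
\]
is an isomorphism, naturally in the index; hence the right-hand term of the claim becomes $\Lim{I}\bigl(\cO(\uEnd(\A^{0|1})) \otimes_R \cO(\usCart(\A^{0|1}, \A^{n|q}))\bigr)$. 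It remains to pull $\cO(\uEnd(\A^{0|1})) \otimes_R (-)$ out of the limit, and this is the crux. Here I would use that $\cO(\uEnd(\A^{0|1})) \cong R[x,\e]$ is a free, hence flat, $R$-module: a limit indexed by a finite category is an equalizer of a pair of maps between finite products, and tensoring with a flat module is additive and left exact, so $\cO(\uEnd(\A^{0|1})) \otimes_R (-)$ commutes with $\Lim{I}$. This is exactly where finiteness of $I$ is used — tensoring with the infinitely generated module $R[x,\e]$ does \emph{not} commute with infinite products — which is why I regard it as the main (and essentially only non-formal) obstacle. Pulling the tensor factor through identifies the right-hand term with $\cO(\uEnd(\A^{0|1})) \otimes_R \Lim{I} \cO(\usCart(\A^{0|1}, \A^{n|q}))$, i.e.\ with $\cO(\uEnd(\A^{0|1})) \otimes_R \cO(\usCart(\A^{0|1}, X))$, proving the isomorphism.

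Finally, to obtain the concluding sentence of the proposition, I would observe that every isomorphism above is the canonical comparison map and that the action $\mu: \uEnd(\A^{0|1}) \times \usCart(\A^{0|1}, -) \to \usCart(\A^{0|1}, -)$ is natural in its argument; hence $\mu^*$ followed by the identification just constructed is the $I$-limit of the objectwise coactions of the coalgebra $\cO(\uEnd(\A^{0|1}))$ on the $\cO(\usCart(\A^{0|1}, \A^{n|q}))$ set up earlier in the section. Since the comodule axioms are equational, their $I$-limit is again a coaction, so $\cO$ applied to the action map does give a genuine coaction for such ``finite'' \sacss.
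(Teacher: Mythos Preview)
Your proposal is correct and follows essentially the same route as the paper: write $X$ as a finite colimit, use that $\usCart(\A^{0|1},-)$ and $\uEnd(\A^{0|1})\times(-)$ preserve colimits, convert to a finite limit via $\cO$, reduce to the representable case where the tensor decomposition is already known, and then pull the tensor factor $\cO(\uEnd(\A^{0|1}))\cong R[x,\e]$ through the finite limit using that it is free (hence flat) over $R$. Your identification of the flatness step as the one genuinely non-formal point, and of finiteness of $I$ as exactly what makes it work, matches the paper's emphasis.
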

\begin{proof}
The key point here is that $\cO(\uEnd(\A^{0|1}))$ is flat as an $R$-module. The underlying module is an (infinitely generated) free $R$-module.

Now we have the sequence of isomorphisms
\begin{align*}
\cO(\uEnd(\A^{0|1}) \times &\usCart(\A^{0|1},X)) \\ &\cong \cO(\uEnd(\A^{0|1}) \times \usCart(\A^{0|1},\Colim{I}\text{ }\A^{n|0})) \\ 
&\cong \cO(\uEnd(\A^{0|1}) \times \Colim{I}\text{ }\usCart(\A^{0|1},\A^{n|0})) \\
&\cong \cO(\Colim{I}\text{ }(\uEnd(\A^{0|1}) \times \usCart(\A^{0|1},\A^{n|0}))) \\
&\cong \Lim{I}\text{ }\cO(\uEnd(\A^{0|1}) \times \usCart(\A^{0|1},\A^{n|0})) \\
&\cong \Lim{I}\text{ }\cO(\uEnd(\A^{0|1})) \otimes_R \cO(\usCart(\A^{0|1},\A^{n|0})) \\
&\cong \cO(\uEnd(\A^{0|1})) \otimes_R \Lim{I}\text{ }\cO(\usCart(\A^{0|1},\A^{n|0})) \\
&\cong \cO(\uEnd(\A^{0|1})) \otimes_R \cO(\usCart(\A^{0|1},X)).
\end{align*}
The second isomorphism follows from the fact that $\A^{0|1}$ is cartesian tiny. The third is because colimits distribute with products in a topos. The fifth is because the objects are affine and the sixth uses the fact that $\cO(\uEnd(\A^{0|1}))$ is flat.
\end{proof}

\begin{prop} \label{prop:finitedim}
Assume that there exists $N \in \N$ such that $X = \Colim{I} \text{ } \A^{n|0}$ with $n < N$. Then the ring of functions on the action map factors through the tensor product. Thus the action map of $\uEnd(\A^{0|1})$ on $\usCart(\A^{0|1},X)$ induces a coaction on global functions.
\end{prop}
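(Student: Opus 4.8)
The plan is to leverage the fact that, under the hypothesis, the mapping \sacs~$\usCart(\A^{0|1},X)$ carries a \emph{finite-dimensional} super cdga structure, so that its \emph{direct-sum} grading survives passage to the limit that computes its ring of functions. Write $X = \Colim{I}\A^{n_i|0}$ with $n_i < N$ for all $i$, put $M_i := \cO(\usCart(\A^{0|1},\A^{n_i|0}))$ and $B := \cO(\usCart(\A^{0|1},X))$. Since $\A^{0|1}$ is cartesian tiny (Proposition~\ref{tiny}), Lemma~\ref{lma:affine} gives $\usCart(\A^{0|1},X)\cong\Colim{I}\A^{n_i|n_i}$, hence $B\cong\Lim{I}M_i$; and, as in the general discussion preceding the statement (colimits distribute over products in a topos, and $\uEnd(\A^{0|1})\cong\A^{1|1}$ is affine),
\[
\cO(\uEnd(\A^{0|1})\times\usCart(\A^{0|1},X)) \;\cong\; \Lim{I}\bigl(\cO(\uEnd(\A^{0|1}))\otimes_R M_i\bigr) \;=\; \Lim{I}\bigl(R[x,\e]\otimes_R M_i\bigr),
\]
and under these identifications the ring map $\mu^*$ is $\Lim{I}\mu_i^*$, where each $\mu_i^*\colon M_i\to R[x,\e]\otimes_R M_i$ is the genuine coaction of Proposition~\ref{prop:calc}. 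The one ingredient of the previous (finite-$I$) proposition that is unavailable here — pulling $\cO(\uEnd(\A^{0|1}))\otimes_R(-)$ out of the limit by flatness — is precisely what the hypothesis $n_i<N$ will let us recover.

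Next I would bring in the degree bound. By Lemma~\ref{lma:affine} we have $M_i\cong R[x_1,\dots,x_{n_i},\bx_1,\dots,\bx_{n_i}]$ with the $x_l$ even of cdga-degree $0$ and the $\bx_l$ odd of cdga-degree $1$; since the $\bx_l$ are square-zero, $M_i$ is a connective super cdga concentrated in degrees $0,\dots,n_i$, hence in degrees $<N$. (It is precisely to obtain this uniform bound that the hypothesis restricts to representables of the form $\A^{n|0}$: the mapping \sacs~out of $\A^{0|1}$ into $\A^{n|q}$ with $q>0$ carries the \emph{even} degree-one generators $\be_j$, which are not square-zero and push the cdga into arbitrarily high degree.) The transition maps $M_j\to M_i$ in the diagram preserve the grading and the differential — by naturality of the $\uEnd(\A^{0|1})$-action they commute with its $\A^1$- and $\A^{0|1}$-parts — so they are maps of super cdgas. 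Because there are at most $N$ graded pieces, the decomposition $M_i\cong\bigoplus_{k<N}(M_i)_k$ is \emph{finite}, hence also a finite product, and therefore survives $\Lim{I}$: the limit $B\cong\Lim{I}M_i$ is again a connective super cdga, concentrated in degrees $<N$, with $B_k\cong\Lim{I}(M_i)_k$, and differential the limit of the $d_i$.

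Finally I would transport this structure. By Proposition~\ref{cdgastructure} and the identification $\uEnd(\A^{0|1})\cong\A^{0|1}\rtimes\A^1$, the super cdga $B$ corresponds to a coaction $\rho\colon B\to B\otimes_R R[x,\e]$; explicitly, a homogeneous $a\in B$ of degree $k$ has $\rho(a)=a\otimes x^k+(da)\otimes x^k\e$, and the same formula presents each $\mu_i^*$. Let $P\subseteq R[x,\e]$ be the $R$-submodule spanned by $1,x,\dots,x^{N-1}$ and $\e,x\e,\dots,x^{N-1}\e$; it is finitely generated free, and the displayed formula (using $n_i<N$) shows that $\rho$ lands in $B\otimes_R P$ and every $\mu_i^*$ lands in $M_i\otimes_R P$. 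Since $P$ is finite free, $(-)\otimes_R P$ commutes with $\Lim{I}$, so $\Lim{I}(M_i\otimes_R P)\cong B\otimes_R P$; combining this with the naturality of the super cdga/coaction correspondence along the cdga maps $B\to M_i$ shows that $\mu^*=\Lim{I}\mu_i^*$ is exactly the composite $B\xrightarrow{\ \rho\ }B\otimes_R R[x,\e]\to\cO(\uEnd(\A^{0|1})\times\usCart(\A^{0|1},X))$. Thus $\mu^*$ factors through the tensor product, and the action map induces the coaction $\rho$ on global functions, as claimed.

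The step I expect to be the real content is the middle one: an internal grading is a direct-\emph{sum} decomposition, and infinite limits do not commute with infinite direct sums, so the bound $n_i<N$ is doing essential work in keeping that decomposition finite — equivalently, in making all the coaction maps $\mu_i^*$ factor through the single finitely generated free module $P$. Everything else is formal manipulation of the adjunctions and the (co)limit interchange already in place in the section.
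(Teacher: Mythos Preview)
Your proof is correct and follows essentially the same approach as the paper: the key point in both is that each $M_i=\cO(\A^{n_i|n_i})$ is concentrated in cdga-degree at most $n_i<N$, so the limit of coactions lands in a bounded range of powers of $x$ and hence factors through the tensor product rather than a completion. Your execution is in fact more careful than the paper's own sketch---you make explicit the finite free submodule $P\subseteq R[x,\e]$ and use $(-)\otimes_R P$ commuting with limits, whereas the paper simply asserts that a factorization exists because no element of the limit has unbounded degree.
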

\begin{proof}
The functor $\cO(-)$ applied to the action map gives
\[
\Lim{I}\text{ }\cO(\usCart(\A^{0|1},\A^{n|0}) \lra{} \Lim{I}\text{ }\cO(\End(\A^{0|1}) \times \usCart(\A^{0|1},\A^{n|0})),
\]
which is an inverse limit of coactions. We see from Proposition \ref{prop:calc} that the action of $\End(\A^{0|1})$ on $\usCart(\A^{0|1},\A^{n|0}))$ induces a grading on $\cO(\usCart(\A^{0|1},\A^{n|0}))) \cong \cO(\A^{n|n})$. The maximal element in the grading has degree $n$. We claim that this implies that there is a factorization of the map above through
\[
\Lim{I}\text{ }\cO(\usCart(\A^{0|1},\A^{n|0}) \lra{} \cO(\End(\A^{0|1}))
	\otimes_R \left(\Lim{I}\text{ }\cO(\usCart(\A^{0|1},\A^{n|0}))\right).
\]
Since $\cO(\uEnd(\A^{0|1}))$ is a polynomial ring (and not a power series ring), 
a factorization exists as long as there is no element in $\Lim{I}\text{ }\usCart(\A^{0|1},\A^{n|0}))$ that has unbounded degree. 
\end{proof}

\begin{example}
Consider the \sacs $\Coprod{n \geq 0} \A^{n|0}$, which is the disjoint union of (non-super) affine spaces, one of each dimension. There is an isomorphism
\[
\cO(\usCart(\A^{0|1},\Coprod{n \geq 0} \A^{n|0})) \cong \Prod{n}\cO(\A^{n|n})
\]
and so this ring has unbounded degree. In particular it contains the element $(1,\varepsilon_1,\varepsilon_1\varepsilon_2,\varepsilon_1\varepsilon_2\varepsilon_3, \ldots)$. Thus no factorization as above could exist for this ring and so the action of $\uEnd(\A^{0|1})$ on $\usCart(\A^{0|1},\Coprod{n \geq 0} \A^{n|0})$ does not induce a coaction after taking the ring of functions. 
\end{example}

For a \sacs~$X$ satisfying the hypotheses of either of the propositions above, let $\Omega^*(X)$ be the super cdga associated to $\Sect(\underline{\sCart}(\A^{0|1},X))$ with the coaction by $\Sect(\A^{0|1} \rtimes \A^1)$. So if $u$ is the forgetful functor from super cdga's to $\sAlg$ we have $u \Omega^*(X) = \Sect(\underline{\sCart}(\A^{0|1},X)) = \Omega(X)$.

\section{Polynomial Forms via Superalgebraic Cartesian Sets} \label{sec:forms}

In \cite{Sull}, Sullivan introduced a simplicial commutative differential graded algebra (cdga) called $\Omega^{*}_{\bullet}$. It is defined on $n$-simplices by the formula
\[
\sSet(\Delta_n,\Omega^{*}_{\bullet}) \cong R[x_1, \ldots, x_n, dx_1, \ldots dx_n],
\]
where $|x_i| = 0$. This is the cdga of K\"ahler differential forms on the polynomial algebra $R[x_1, \ldots, x_n,]$. 
The simplicial maps are built just as in the functor $i$ introduced in Subsection~\ref{sec:SACS_simplicial}. 

The $n$-simplices of the simplicial cdga in fact have the structures of a super cdga (a cdga with a $\Z/2$ grading and an odd degree $1$ differential). The elements $x_i$ are even of degree $0$ and the elements $dx_i$ are odd of degree $1$. 

For any simplicial set $X$ the set of maps $\sSet(X, \Omega^{*}_{\bullet}) =: \Omega_R^*(X)$ is a commutative differential graded $R$-algebra, which is only weakly graded if $X$ is infinite dimensional (c.f. Def.~\ref{def:supercdga}). This cdga has a concrete description. An element consists of a compatible choice $\{\omega_\sigma\}_{\sigma \in X}$ of polynomial K\"ahler differential forms for each simplex $\sigma$ of $X$. This collection is required to be compatible with restriction maps in the obvious way. 

When $R$ is a $\Q$-algebra, the simplicial cdga $\Omega^{*}_{\bullet}$ has the property that, for a simplical set $X$, 
\[
H^*(\sSet(X, \Omega^{*}_{\bullet})) \cong H^*(X,R),
\] 
where $H^*(X,R)$ is the singular cohomology of $X$ with coefficients in the ring $R$ (c.f. \cite[Thm~7.1]{Sull}). The cdga $\Omega_\Q^*(X)$ is Sullivan's rational polynomial differential forms.

There is a forgetful functor $u$ from the category of super cdga's to $\sAlg$. The simplicial supercommutative algebra
\[
u\Omega^{*}_{\bullet}: \Delta^{op} \lra{} \sAlg.
\]
will play an in important role in this section, where we prove that for any simplicial set $X$ there is a natural isomorphism of supercommutative algebras
\[
\Sect(\underline{\sCart}(\A^{0|1}, i_!X)) \cong u\sSet(X, \Omega^{*}_{\bullet}).
\]


We begin by studying the relationship between $\Omega$ and $\Omega^{*}_{\bullet}$.

\begin{prop}
There is a natural isomorphism of simplicial supercommutative algebras (where $\Omega$ is the \sacs~from Example~\ref{ex:Omega})
\[
u\Omega^{*}_{\bullet} \cong i^* \Omega.
\]
\end{prop}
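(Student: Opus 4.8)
The plan is to produce the isomorphism one simplicial degree at a time and then check compatibility with the simplicial structure maps. Write $\Omega^{*}_{A/R}$ for the K\"ahler/de Rham (super) algebra of a commutative $R$-algebra $A$, so that, by construction as recalled above, $\Omega^{*}_{\bullet}$ is the simplicial object obtained by applying $A\mapsto\Omega^{*}_{A/R}$ to the cosimplicial algebra $[n]\mapsto\Sect(\A^{n|0})=R[x_1,\dots,x_n]$ underlying the functor $i$; in particular the structure map $\alpha^{*}$ of $\Omega^{*}_{\bullet}$ attached to $\alpha\colon[n]\to[m]$ is the pullback $\Omega^{*}_{\Sect(i(\alpha))/R}$, which sends $x_j\mapsto\Sect(i(\alpha))(x_j)$ and $dx_j\mapsto d\!\left(\Sect(i(\alpha))(x_j)\right)$. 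On $n$-simplices, Example~\ref{ex:Omega} together with Lemma~\ref{lma:affine} gives $i^{*}\Omega([n])=\Sect(\usCart(\A^{0|1},\A^{n|0}))\cong\Sect(\A^{n|n})$, the free supercommutative $R$-algebra on even generators $x_1,\dots,x_n$ and odd generators $\bx_1,\dots,\bx_n$, where, as in Section~\ref{sec:endos}, $x_i$ is the coordinate recording the value at the origin of $\A^{0|1}$ and $\bx_i$ is the first-order coordinate. Since $u\Omega^{*}_{\bullet}([n])=\Omega^{*}_{R[x_1,\dots,x_n]/R}$ is free on even $x_i$ and odd $dx_i$, I would fix the algebra isomorphism $\phi_n\colon u\Omega^{*}_{\bullet}([n])\to i^{*}\Omega([n])$ with $x_i\mapsto x_i$, $dx_i\mapsto\bx_i$; by Proposition~\ref{prop:calc} and the discussion following it, $\phi_n$ is moreover compatible with the gradings and differentials.

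It then remains to prove that $\phi_{\bullet}$ is natural in $[n]$. By definition of $\Omega$ as the right adjoint of $\usCart(\A^{0|1},-)$ (Corollary/Definition~\ref{cor:right-adjoint-to-innerhom}), the structure map of $i^{*}\Omega$ at $\alpha$ is $\Omega(i(\alpha))=\Sect(\usCart(\A^{0|1},i(\alpha)))$. A short functor-of-points computation --- Taylor-expanding modulo $\e^{2}$, as in the proofs of Lemma~\ref{lma:affine} and Proposition~\ref{prop:calc} --- shows that for any $g\colon\A^{n|0}\to\A^{m|0}$ given on functions by $x_j\mapsto g_j(x_1,\dots,x_n)$, the map $\usCart(\A^{0|1},g)\colon\A^{n|n}\to\A^{m|m}$ corresponds to the ring map $\Sect(\A^{m|m})\to\Sect(\A^{n|n})$ sending the $j$-th even generator to $g_j(x)$ and the $j$-th odd generator to $\sum_i\partial_i g_j(x)\,\bx_i$. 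Hence $\Omega(g)$ sends $x_j\mapsto g_j(x)$ and $\bx_j\mapsto\sum_i\partial_i g_j(x)\,\bx_i=d(g_j(x))$. Taking $g=i(\alpha)$ and comparing with $\alpha^{*}$ through $\phi$ gives $\phi_n\circ\alpha^{*}=\Omega(i(\alpha))\circ\phi_m$, which is exactly the naturality needed, so $\phi_{\bullet}$ is the claimed isomorphism of simplicial supercommutative algebras.

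I expect the only real obstacle to be bookkeeping rather than mathematics: one must match the abstract $\Omega$ (defined via an internal mapping object, equivalently as a right adjoint) with its coordinatized form, so that the functoriality of $\Omega$ on $\sA$ becomes \emph{literally} the de Rham pullback and not merely abstractly isomorphic to it, and one must keep straight which coordinate on $\usCart(\A^{0|1},\A^{n|0})\cong\A^{n|n}$ plays the role of $x_i$ and which of $dx_i$. A more conceptual repackaging of the naturality step that side-steps the explicit computation: both $\Omega(i(\alpha))$ and (the transport of) $\alpha^{*}$ are maps of super cdgas --- for $\Omega(i(\alpha))$ because the action of $\uEnd(\A^{0|1})\cong\A^{0|1}\rtimes\A^{1}$ on $\usCart(\A^{0|1},-)$ is natural in the target, so that on applying $\Sect$ to representables (where the action becomes a genuine coaction, as in Section~\ref{sec:endos}) $\Omega(i(\alpha))$ intertwines the coactions and hence, by Proposition~\ref{cdgastructure}, is a cdga map --- and a map of super cdgas out of $\Omega^{*}_{R[x_1,\dots,x_m]/R}$ is determined by its restriction to the degree-zero part $R[x_1,\dots,x_m]$, which in both cases is $\Sect(i(\alpha))$ (for $\Omega$ one identifies the degree-zero part using the naturality of the ``evaluation at the origin'' map $\usCart(\A^{0|1},Y)\to Y$). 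Either route completes the proof.
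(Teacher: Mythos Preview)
Your proof is correct and follows essentially the same approach as the paper: identify both sides on $[n]$ as $\Sect(\A^{n|n})$ via the definition of $\Omega$ and Lemma~\ref{lma:affine}. The only difference is emphasis: the paper writes the levelwise identification as a chain of adjunction and Yoneda isomorphisms, each of which is manifestly natural in $[n]$, and lets naturality be carried along implicitly; you instead fix coordinates and verify naturality by the explicit Taylor-expansion computation (and offer a second, cdga-based argument). Your extra remark about compatibility with gradings and differentials anticipates the next proposition in the paper but is not needed for the present statement, which concerns only the underlying simplicial supercommutative algebras.
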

\begin{proof}
Evaluating on $\Delta^n$ gives
\begin{align*}
\sSet(\Delta^n, i^* \Omega) &\cong \sCart(i_!\Delta^n, \Omega) \\ &\cong \sCart(\A^n, \Omega) \\&\cong \Sect(\underline{\sCart}(\A^{0|1},\A^n)) \\ &\cong \Sect(\A^{n|n}) \\ &\cong R[x_1,\ldots,x_n,\e_1, \ldots, \e_n] \\ &\cong u\Omega^{*}_{\bullet}(\Delta^n).
\end{align*}
\end{proof}

As a special case we get the following corollary:

\begin{cor} \label{maincor}
For any simplicial set $X$ there is an isomophism
\[
\Sect(\underline{\sCart}(\A^{0|1}, i_!X)) \cong u\sSet(X, \Omega^{*}_{\bullet}).
\]
\end{cor}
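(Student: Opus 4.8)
The plan is to assemble the isomorphism from three already-established facts: Corollary~\ref{cor:functions_on_superpoints}, the adjunction $i_! \dashv i^*$ of Example~\ref{example:sSetasSACS}, and the immediately preceding proposition identifying $u\Omega^{*}_{\bullet}$ with $i^*\Omega$. Concretely, I would apply Corollary~\ref{cor:functions_on_superpoints} with the \sacs~taken to be $i_!X$, giving a natural isomorphism $\Sect(\underline{\sCart}(\A^{0|1}, i_!X)) \cong \sCart(i_!X, \Omega)$ of supercommutative rings. Then I would invoke the defining adjunction isomorphism $\sCart(i_!X, \Omega) \cong \sSet(X, i^*\Omega)$, and finally substitute the preceding proposition $i^*\Omega \cong u\Omega^{*}_{\bullet}$ to reach $\sSet(X, u\Omega^{*}_{\bullet})$.

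The only point requiring a word of justification is the passage from $\sSet(X, u\Omega^{*}_{\bullet})$ to $u\,\sSet(X, \Omega^{*}_{\bullet})$, i.e. that forming the mapping object commutes with the forgetful functor $u$ from super cdga's to $\sAlg$. This holds because $\sSet(X, \Omega^{*}_{\bullet})$ is computed as a limit (an end over the category of simplices of $X$, equivalently a limit of the diagram of the $\Omega^{*}_{\bullet}(\Delta^n)$ indexed by $X$), and $u$ preserves all limits since limits of super cdga's are created on underlying supercommutative algebras (the grading and differential are imposed componentwise). I would state this as a brief remark rather than belabor it. Naturality in $X$ is automatic since every isomorphism in the chain is natural: Corollary~\ref{cor:functions_on_superpoints} is natural in the \sacs, $i_!$ is functorial, the adjunction isomorphism is natural, and the preceding proposition is an isomorphism of simplicial objects.

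So the proof is simply the chain of natural isomorphisms
\[
\Sect(\underline{\sCart}(\A^{0|1}, i_!X)) \cong \sCart(i_!X, \Omega) \cong \sSet(X, i^*\Omega) \cong \sSet(X, u\Omega^{*}_{\bullet}) \cong u\,\sSet(X, \Omega^{*}_{\bullet}),
\]
which is exactly the "special case" assertion: the preceding proposition, evaluated against an arbitrary simplicial set $X$ by mapping in.

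\textbf{Main obstacle.} There is essentially no obstacle here — the statement is a formal consequence of results already proved. The only place one must be slightly careful is the commutation of $u$ with the $\sSet(X,-)$ construction, which is handled by observing that $u$ preserves limits; everything else is a concatenation of named isomorphisms.
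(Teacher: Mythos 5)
Your proposal is correct and is essentially identical to the paper's own proof: the paper's argument is exactly the same chain of isomorphisms (written in the reverse order), invoking Corollary~\ref{cor:functions_on_superpoints}, the adjunction $i_! \dashv i^*$, and the preceding proposition $u\Omega^{*}_{\bullet} \cong i^*\Omega$. Your extra remark justifying that $u$ commutes with $\sSet(X,-)$ is a point the paper passes over silently, and it is a correct and welcome addition.
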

\begin{proof}
There are isomorphisms
\begin{align*}
u\sSet(X, \Omega^{*}_{\bullet}) &\cong \sSet(X, u\Omega^{*}_{\bullet}) \\
&\cong \sSet(X, i^*\Omega) \\ &\cong \sCart(i_!X,\Omega) \\ &\cong \Sect(\underline{\sCart}(\A^{0|1}, i_!X)).
\end{align*}
The last isomorphism is an application of Corollary \ref{cor:functions_on_superpoints}.
\end{proof}

In other words, for any simplicial set $X$ the supercommutative algebra underlying the commutative differential graded algebra $\Omega^*_R(X)$ of polynomial differential forms over the ring $R$ on $X$ is naturally isomorphic to the ring of functions on the internal mapping object $\usCart(\A^{0|1}, i_!X)$. 

\begin{prop}
For $X$ a  simplicial set, there is an isomorphism of (weakly graded) super cdga's
\[
\Omega^*(i_!X) \cong \sSet(X,\Omega^{*}_{\bullet}).
\]
\end{prop}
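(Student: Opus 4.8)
The plan is to promote the isomorphism of underlying supercommutative algebras in Corollary~\ref{maincor} to an isomorphism of (weakly graded) super cdga's, i.e.\ to check that it is compatible with the $\N$-gradings and the odd degree-one differentials on both sides. Recall that the super cdga structure on $\Omega^*(i_!X)$ is, by the discussion at the end of Section~\ref{sec:endos}, the one corresponding to the coaction of $\Sect(\A^{0|1}\rtimes\A^1)\cong\Sect(\uEnd(\A^{0|1}))$ on $\Sect(\underline{\sCart}(\A^{0|1},i_!X))$, whereas $\sSet(X,\Omega^{*}_{\bullet})$ carries its tautological structure as the weakly graded super cdga of compatible families $\{\omega_\sigma\}_{\sigma\in X}$ of polynomial K\"ahler forms. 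Since both structures are defined simplex-by-simplex, it is enough to match them on each extended simplex $\A^n=i_!\Delta^n$ and verify naturality in $[n]\in\Delta$.

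First I would record the comparison on a single $\A^n$. By Lemma~\ref{lma:affine} and Corollary~\ref{cor:functions_on_superpoints} we have $\Sect(\underline{\sCart}(\A^{0|1},\A^n))\cong\Sect(\A^{n|n})\cong R[x_1,\dots,x_n,\bx_1,\dots,\bx_n]$, and under the isomorphism $u\Omega^{*}_{\bullet}\cong i^*\Omega$ proved above this sends $x_i$ to the $i$-th coordinate and $\bx_i$ to $dx_i$. Specializing Proposition~\ref{prop:calc} to $q=0$ gives the coaction $x_i\mapsto x_i+\bx_i\e$, $\bx_i\mapsto\bx_i x$; reading this off as in the proof of Proposition~\ref{cdgastructure}, $x_i$ acquires degree $0$, $\bx_i$ degree $1$, and the induced differential satisfies $dx_i=\bx_i$ and $d\bx_i=0$. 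These are precisely the grading and the K\"ahler differential on $R[x_1,\dots,x_n,dx_1,\dots,dx_n]$, so the isomorphism $u\Omega^{*}_{\bullet}(\Delta^n)\cong\Sect(\A^{n|n})$ is one of super cdga's; it is natural in $[n]$ because every map in sight (the one giving $u\Omega^{*}_{\bullet}\cong i^*\Omega$, the coaction formula of Proposition~\ref{prop:calc}, and Sullivan's simplicial structure maps) is built from the functor $i:\Delta\to\sA$.

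To globalize, write $X=\colim_m X^{(m)}$ as the colimit of its skeleta. Each $i_!X^{(m)}$ is a colimit of representables $\A^{n|0}$ with $n\le m$, so Proposition~\ref{prop:finitedim} applies and $\Omega^*(i_!X^{(m)})$ is an honest connective super cdga; since $\underline{\sCart}(\A^{0|1},-)$ and $\Sect(-)$ turn the colimit over simplices into a limit (as in the proof of Corollary~\ref{maincor}), the per-simplex isomorphisms of the previous paragraph assemble into an isomorphism of super cdga's $\Omega^*(i_!X^{(m)})\cong\sSet(X^{(m)},\Omega^{*}_{\bullet})$, compatibly as $m$ grows. Taking the limit over $m$ then identifies $\Omega^*(i_!X)=\lim_m\Omega^*(i_!X^{(m)})$ with $\sSet(X,\Omega^{*}_{\bullet})=\lim_m\sSet(X^{(m)},\Omega^{*}_{\bullet})$. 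The one subtlety, which is also the main obstacle, lies exactly in this last step: when $X$ is infinite-dimensional, $i_!X$ fails the finiteness hypotheses of Section~\ref{sec:endos}, so one must interpret $\Omega^*(i_!X)$ as the inverse limit over skeleta (equivalently, as compatible families of K\"ahler forms with simplex-wise grading and differential), and one checks that along the tower the grading decomposes $\Sect(\underline{\sCart}(\A^{0|1},i_!X))$ as a \emph{product} rather than a direct sum of its degree-$n$ pieces --- which is why only a weakly graded super cdga is claimed.
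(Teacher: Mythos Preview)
Your proposal is correct and follows essentially the same approach as the paper: reduce to the representable case $\A^n=i_!\Delta^n$, verify there via Proposition~\ref{prop:calc} that the coaction-induced grading and differential coincide with the K\"ahler structure (so $\bx_i\leftrightarrow dx_i$), assemble via limits over simplices for finite-dimensional $X$, and then pass to the inverse limit over skeleta for infinite-dimensional $X$, observing that the result is only weakly graded. Your write-up is in fact slightly more explicit than the paper's in unpacking the coaction on $\A^n$ and in noting naturality in $[n]$.
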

\begin{proof}
	We will first consider the case of $X$ a finite dimensional simplicial set.
In Corollary \ref{cor:functions_on_superpoints}, we showed that the above is an isomorphism of the underlying supercommutative algebras. Here we lift this to the category of super cdga's. 

The forgetful functor $u$ creates limits and $i_!$ preserves colimits. This implies that $X = \Colim{\Delta^k \rightarrow X}\Delta^k$ satisfies the conditions of Proposition \ref{prop:finitedim}. Now there are isomorphisms
\begin{align*}
\Omega^*(i_!X) &\cong \Omega^*(\Colim{i_!(\Delta^k \rightarrow X)}i_!\Delta^k) \\ &\cong \Omega^*(\Colim{i_!(\Delta^k \rightarrow X)}\A^k) \\ &\cong \Lim{i_!(\Delta^k \rightarrow X)}\Omega^*(\A^k).
\end{align*}

Thus it suffices to prove the result for $\A^k \cong i_!\Delta^k$. Now this follows from Proposition \ref{prop:calc}. Thus $\bx_i$ corresponds to $dx_i$.

Now let $X$ be an infinite dimensional simplicial set. We may write it as a colimit of its finite dimensional skeleta. This implies that $\Omega(X)$ admits a sequential inverse limit of coactions. We have
\begin{equation*}
	\Omega (X) \cong \Prod{i \in \N} \Omega^i(X).
\end{equation*}
This fails to have a super cdga structure only in that it is the direct product over isotypical factors, rather than the direct sum. Hence it is weakly graded.  
\end{proof}

\section{Geometries On The Superpoint} \label{sec:geometries}

In this section we will study several possible geometries that can be placed on the superpoint. Each of these geometries will give rise to a slightly different breed of supersymmetric $0|1$-dimensional quantum field theory. 

Following the lead of HKST we will define a  geometry in the spirit of Felix Klein's Erlangen program. That is to say a {geometry} is completely specified by its group symmetries, which is a subgroup of $\uAut(\A^{0|1})$. In fact the most natural thing which acts on $\A^{0|1}$ is the {\em monoid} of endomorphisms; we don't see a compelling reason to limit ourselves to sub{\em groups}.  

\begin{definition}\label{def:geometry}
	A {\em geometry} on $\A^{0|1}$ is a submonoid $\M$ of the monoid $\uEnd(\A^{0|1})$ of endomorphisms (in \sacss). 
\end{definition}

\noindent There are five geometries that we will explore below:
\begin{enumerate}
	\item $\M = \uEnd(\A^{0|1}) \cong \A^{0|1} \rtimes \A^1$ is the full endomorphism monoid. We call this geometry {\em pre-topological}. 
	\item $\M = \uAut(\A^{0|1}) \cong \A^{0|1} \rtimes \G_m$ is the maximal subgroup. We call this geometry {\em topological}.
	\item $\M = \A^{0|1} \rtimes \Z/2\Z$. Following HKST we call this geometry {\em Euclidean}.
	\item $\M  = \A^{0|1} \times 1$. We call this geometry {\em oriented Euclidean}. 
	\item $\M = 1$. We call this geometry {\em fully-rigid}. 
\end{enumerate}

The geometries (submonoids) include into each other in the following way:
\[
1 \subset \A^{0|1} \times 1 \subset \A^{0|1} \rtimes \Z/2\Z \subset \A^{0|1} \rtimes \G_m \subset \A^{0|1} \rtimes \A^1,
\]
where we have abused notation and written $\G_m$ for $\Sect^*(\G_m)$. On global functions these inclusions correspond to the maps of supercommutative bialgebras
\[
R[x,\e] \lra{x \mapsto x} R[x,x^{-1},\e] \lra{x \mapsto (1,-1)} (R\times R)[\e] \lra{\pi_1} R[\e] \lra{\e \mapsto 0} R.
\]

\begin{cor} The following are consequences of the proof of Proposition \ref{cdgastructure}:
\begin{enumerate}
\item A supercommutative algebra with a coaction by $\Sect(\A^{0|1} \rtimes \G_m)$ is a $\Z$-graded super cdga. 

\item A supercommutative algebra with a coaction by $\Sect(\A^{0|1} \rtimes \Z/2)$ is a $\Z/2$-graded super cdga. 

\item A supercommutative algebra with a coaction by $\Sect(\A^{0|1})$ is a supercommutative algebra with an odd differential.
\end{enumerate}
\end{cor}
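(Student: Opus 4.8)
The plan is to reread the proof of Proposition~\ref{cdgastructure}, which analyzed a coaction of $\Sect(\A^{0|1} \rtimes \A^1) \cong R[x,\e]$ by splitting it along the canonical section into its $\Sect(\A^1) = R[x]$ part (a connective $\N$-grading) and its $\Sect(\A^{0|1}) = R[\e]$ part (an odd differential), and then used the twisted associativity diagram to conclude that $d$ raises degree by one. Each of the three statements follows by restricting that analysis along the chain of bialgebra maps
\[
R[x,\e] \to R[x,x^{-1},\e] \to (R\times R)[\e] \to R[\e]
\]
recorded just before the corollary, and checking that the same computations still apply.

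For (1), I would observe that a coaction of $\Sect(\A^{0|1}\rtimes \G_m)$ restricts along the section to a coaction of $\Sect(\G_m) = R[x,x^{-1}]$, which is exactly a $\Z$-grading; the only change from the $\A^1$ case is that $x$ is now invertible, so negative degrees are allowed and the connectivity hypothesis disappears. The $\Sect(\A^{0|1})$ part still gives an odd square-zero derivation, and the twisted-associativity computation at the end of the proof of Proposition~\ref{cdgastructure} — which used only the formula for $m^*$ and is valid for any integer $k$ — shows $\mu^*(da) = (da)x^{|a|+1}$, i.e.\ $d$ has degree $+1$. Hence a $\Z$-graded super cdga.

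For (2), here $\Sect(\Z/2) = R\times R$ is the group algebra $R[\Z/2]$ (the Cartier dual of the constant group $\Z/2$), so a comodule over it is precisely a $\Z/2$-graded $R$-module; write $t$ for the grouplike generator, with $t^2 = 1$. Running the same splitting and the same associativity computation — now with $\Z/2$ acting on $\A^{0|1}$ by $\e \mapsto -\e$ — gives $\mu^*(da) = (da)t^{|a|+1}$, so $d$ raises the $\Z/2$-degree by one, which is consistent precisely because reducing the degree-$+1$ condition modulo $2$ is well defined. Hence a $\Z/2$-graded super cdga. For (3), restricting all the way down to $\Sect(\A^{0|1}) = R[\e]$ forgets the grading entirely and leaves only the odd-differential part of the proof of Proposition~\ref{cdgastructure}: the coaction $a \mapsto a + (da)\e$ makes $d$ an odd derivation, and associativity of the coaction forces $d^2 = 0$; hence a supercommutative algebra with an odd differential.

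The only step requiring care beyond bookkeeping is (2): one must identify $\Sect(\Z/2)$ with the group algebra and confirm that the semidirect-product twist is compatible with the $\Z/2$-grading. But since the nontrivial element of $\Z/2$ acts on $\A^{0|1}$ by the sign $-1$, all the signs appearing in the twisted coproduct reduce correctly modulo $2$, so the computation from the proof of Proposition~\ref{cdgastructure} carries over unchanged. Everything else is a direct restriction of that proof.
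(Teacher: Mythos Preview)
Your proposal is correct and is essentially what the paper intends: the paper gives no separate proof of this corollary, simply stating it as a consequence of the proof of Proposition~\ref{cdgastructure}, and you have accurately spelled out how each case arises by restricting that argument along the bialgebra maps $R[x,\e] \to R[x,x^{-1},\e] \to (R\times R)[\e] \to R[\e]$. The only minor quibble is terminological: in the paper's conventions $\Z/2 \subset \G_m$ is really $\mu_2$ with Hopf algebra $R[x]/(x^2-1) \cong R\times R$ (via $x \mapsto (1,-1)$) and $x$ grouplike, so the $\Z/2$-grading comes directly from the comodule structure rather than via Cartier duality; but this does not affect the substance of your argument.
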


Next we define the notion of $\M$-structure on a \sacs~that is abstractly isomorphic to the superpoint.

\begin{definition}
Let $X$ be a \sacs~that is abstractly isomorphic to the superpoint $\A^{0|1}$. An \emph{$\M$-prestructure} on $X$ is a subfunctor
\[
\Gamma \subseteq \underline{\sCart}(\A^{0|1},X)
\]
with the property that $\Gamma$ is closed under the action of $\M$:
\[
\Gamma \cdot \M = \Gamma.
\]

An $\M$-isometry between two \sacss~equipped with $\M$-prestructures, $(X,\Gamma)$ and $(X',\Gamma ')$, is a map $X \lra{f} X'$ such that $f_*\Gamma \subseteq \Gamma '$. Thus $(X, \Gamma)$ is isomorphic to $(X',\Gamma ')$ if there is an isomorphism $X \lra{f} X'$ such that $f_*\Gamma = \Gamma '$.
\end{definition}

\begin{example}
The superpoint $\A^{0|1}$ has a canonical $\M$-prestructure given by
\[
\M \subseteq \underline{\sCart}(\A^{0|1},\A^{0|1}).
\]
\end{example}

\begin{definition}
An $\M$-prestructure $(X,\Gamma)$ is an \emph{$\M$-structure} if there exists an isomorphism 
\[
(X,\Gamma) \cong (\A^{0|1}, \M).
\]
\end{definition}

There is an action of $\M$ on the \sacs
\[
\underline{\sCart}(\A^{0|1},X)
\]
given by precomposition. We may consider the (categorical) quotient \sacs
\[
\underline{\sCart}(\A^{0|1},X)/\M.
\]
When $\M$ is a group and thus a subgroup of $\A^{0|1}\rtimes \G_m$, we may consider its normalizer in $\A^{0|1}\rtimes \G_m$ defined by the formula
\[
N(\M)(\A^{n|q}) = \{g \in (\A^{0|1}\rtimes \G_m)(\A^{n|q}) | g\M(\A^{n|q})g^{-1} = \M(\A^{n|q}).
\]
In each of the above cases the normalizer is the whole of $\A^{0|1}\rtimes \G_m$.

It is clear that $N(\M)$ acts on
\[
\underline{\sCart}(\A^{0|1},X)/\M.
\]
When $\M$ is normal in $N(\M)$, the action factors through $N(\M)/\M$.

\begin{example}
The most interesting example of this is the Euclidean geometry
\[
\M = \A^{0|1}\rtimes \Z/2.
\]
We have $N(\M) = \A^{0|1}\rtimes \G_m$ and when $2 \in R^{\times}$, $\Z/2 \cong \G_m[2]$, and hence
\[
N(\M)/\M \cong \G_m/\G_m[2] \cong \G_m.
\]
Note that an action of $\G_m$ on $\Spec(R)$ that factors through $\G_m/\G_m[2]$ is equivalent to a grading by the even integers.
\end{example}

\begin{remark}
For each of the other geometries this is quite elementary. When $\M = 1$, there is an action of $\A^{0|1}\rtimes \G_m$. When $\M = \A^{0|1}$, there is an action of $\G_m$. 
\end{remark}

\section{Superalgebraic Cartesian Quantum Field Theories} \label{sec:SQFT2}

We are now in a position to define supersymmetric $0|1$-dimensional quantum field theories over an arbitrary \sacs. For each geometry $\M$ (discussed in the last section) and each \sacs~$X$, we construct $\Bord_{(\M, X)}^{0|1}$, the symmetric monoidal 0-category (internal to \sacss) consisting of  $0|1$-dimensional bordisms equipped with $\M$-structures and maps to $X$. 
As a symmetric monoidal 0-category is just a commutative monoid, $\Bord_{(\M, X)}^{0|1}$ is just a commutative monoid object in \sacss. We will describe it in more detail in just a moment. 

The target of a field theory is another symmetric monoidal category, which in this 0-dimensional case means another commutative monoid (internal to \sacss). The target of a {\em quantum} field theory (as opposed to a classical field theory or other variety of field theory) should have some further mechanism implementing the physical concept of {\em superposition}. This can be accomplished by requiring the target category to have not just a multiplicative (i.e. monoidal) structure, but to also have an additive structure. In the classical context of the Atiyah-Segal axioms this is the direct sum operation on the target category of vector spaces. In the case at hand it means that our target should be a ring (or at least a rig). A natural choice is the ring $\cO = \A^{1|1}$.

	A supersymmetric $0|1$-dimensional $\M$-quantum field theory over a \sacs~$X$ is then defined to be a homomorphism
	\begin{equation*}
		Z: \Bord_{(\M, X)}^{0|1} \to \cO
	\end{equation*}
of commutative monoids in \sacss.

As usual, the easiest way to describe $\Bord_{(\M, X)}^{0|1}$ and the homomorphism $Z$ is via the formalism of $S$-points. For each representable $\A^{n|q} \in \sA$, and each map $f:\A^{n|q} \to \Bord_{(\M, X)}^{0|1}$, the field theory $Z$ should associate a map $Z(f): \A^{n|q} \to \cO$. That is to say we obtain a function $Z(f) \in \cO(\A^{n|q})$.  Maps $\A^{n|q} \to \Bord_{(\M, X)}^{0|1}$ are obtained by considering $\A^{n|q}$-families of bordisms. 

Each of the structures we place on our bordisms, the $\M$-structure, the map to $X$, the supercommutative ring of functions, indeed even the enrichment in \sacss, should be thought of as enhancements we give to an underlying topological 0-bordism. The category $\Bord_{(\M, X)}^{0|1}$ should have a forgetful functor to $\Bord^0$. 
Hence every $0|1$-dimensional bordism is equivalent to a finite disjoint union of  superpoints, and we will say that this bordism is equipped with an $\M$-structure if each component superpoint has an $\M$-structure. Two such bordisms will be equivalent if they are related by $\M$-isometries, where an $\M$-isometry between bordisms is a permutation followed by $\M$-isometries on each factor. This definition ensures there is a  forgetful functor to $\Bord^0$. 

\begin{definition}
	The superalgebraic cartesian commutative monoid $\Bord_{(\M, X)}^{0|1}$ associates to $\A^{n|q} \in \sA$ the set of equivalence classes of $\A^{n|q}$-families of $0|1$-dimensional bordisms equipped with $\M$-structure, with the equivalence relation of $\M$-isometry. 
\end{definition}

Hence an $\A^{n|q}$-family of such bordisms induces a map $\A^{n|q} \to \Bord_{(\M, X)}^{0|1}$. Thus a field theory $Z$ will associate to each such family
\begin{equation*}
	f:\A^{n|q} \times Y^{0|1} \to X
\end{equation*}
a function $Z(f) \in \cO(\A^{n|q})$ (where $Y^{0|1} \cong \coprod_k \A^{0|1}$ for some $k$, and is equipped with an $\M$-structure). If two  $\A^{n|q}$-families of bordisms are related by an $\A^{n|q}$-family of $\M$-isometries, then the associated functions will be the same. 
This gives rise to the following explicit description of $\Bord_{(\M, X)}^{0|1}$:



\begin{prop} \label{prop:Bord_as_free_comm_monoid}
	The \sacs~$\Bord_{(\M, X)}^{0|1}$ is given by the quotient 
	\begin{align*}
		\Bord_{(\M, X)}^{0|1} &\cong  \coprod_{k \in \N} \left( \usCart(\coprod_k\A^{0|1}, X) / \M \wr \Sigma_k \right) \\
		& \cong \coprod_{k \in \N} \left( \prod_k [\usCart(\A^{0|1}, X)/ \M]  / \Sigma_k   \right).
	\end{align*}
\end{prop}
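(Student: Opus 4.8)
The plan is to unwind the definition of $\Bord_{(\M, X)}^{0|1}$ and recognize it, objectwise, as the asserted pointwise quotient. The first step is to separate off the number of superpoints. An $\A^{n|q}$-family of $0|1$-dimensional bordisms with $\M$-structure and map to $X$ has, by the discussion preceding the proposition, underlying bundle the \emph{trivial} bundle $\A^{n|q} \times \coprod_k \A^{0|1}$: since $\sCart$ is a presheaf topos there is no nontrivial descent, so every bundle of superpoints over a representable is already trivial (this is the ``first major departure'' from the smooth story, where HKST must instead pass to a stack quotient). The remaining data is a map $f : \A^{n|q} \times \coprod_k \A^{0|1} \to X$ together with an $\M$-structure on each of the $k$ components. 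The integer $k$ is part of the data and is preserved by any $\M$-isometry (a permutation of the factors followed by factorwise $\M$-isometries), so $\Bord_{(\M, X)}^{0|1}$ decomposes as a coproduct over $k \in \N$ of its $k$-component subfunctors, and it remains to identify each of these.

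Fix $k$. Forgetting the $\M$-structure, a $k$-component family with map to $X$ is exactly an element of $\sCart(\A^{n|q} \times \coprod_k \A^{0|1}, X) \cong \usCart(\coprod_k \A^{0|1}, X)(\A^{n|q})$, by the definition of the internal hom (Example~\ref{ex:innerhom}). The $\M$-structure contributes no further moduli: by the definition of $\M$-structure (as opposed to $\M$-prestructure) the structure on each component is, over $\A^{n|q}$, isomorphic to the standard one $\M \subseteq \usCart(\A^{0|1}, \A^{0|1})$, so after choosing such isomorphisms we may take the structure to be standard. An $\A^{n|q}$-family of $\M$-isometries of the standard $k$-component bordism is then precisely an $\A^{n|q}$-point of the wreath-product monoid $\M \wr \Sigma_k = \M^{k} \rtimes \Sigma_k$, acting on the map to $X$ by precomposition: on a single factor, an endomorphism $f$ of $\A^{0|1}$ satisfies $f \cdot \M \subseteq \M$ precisely when $f \in \M$ (one direction is closure of the submonoid $\M$ under composition; the other follows since $1 \in \M$, whence $f = f \cdot 1 \in \M$), and the factors may additionally be permuted. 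Hence the $\M$-isometry equivalence relation on $k$-component families is the one generated by this precomposition action, and since coequalizers of monoid actions in $\sCart$ are computed pointwise, the $k$-component subfunctor is $\usCart(\coprod_k \A^{0|1}, X)/\M\wr\Sigma_k$. Assembling over $k$ gives the first displayed isomorphism.

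The second isomorphism is purely formal. Internal hom carries a coproduct in the source to a product, so the exponential law gives $\usCart(\coprod_k \A^{0|1}, X) \cong \prod_{j=1}^{k} \usCart(\A^{0|1}, X)$, naturally and $\M^{k} \rtimes \Sigma_k$-equivariantly, with $\M^{k}$ acting factorwise by precomposition and $\Sigma_k$ permuting the factors. Computing the quotient in two stages: quotienting by the factorwise $\M^{k}$-action is pointwise, and for a factorwise action on a finite product of sets the quotient is the product of the quotients, so $\big(\prod_{j=1}^{k} \usCart(\A^{0|1}, X)\big)/\M^{k} \cong \prod_{j=1}^{k} \big(\usCart(\A^{0|1}, X)/\M\big)$; this still carries the residual $\Sigma_k$-action, and quotienting by it yields $\prod_k [\usCart(\A^{0|1}, X)/\M]/\Sigma_k$. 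Here I use that $\M \wr \Sigma_k = \M^{k} \rtimes \Sigma_k$ and that quotienting by a semidirect product $N \rtimes H$ may be done by first quotienting by $N$ and then by the induced $H$-action.

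The step I expect to be the real obstacle is the middle paragraph, i.e. making precise that ``$\A^{n|q}$-families of $k$-component bordisms with $\M$-structure and map to $X$, modulo $\M$-isometry'' is \emph{literally} the pointwise quotient of $\usCart(\coprod_k \A^{0|1}, X)$ by $\M \wr \Sigma_k$. This rests on two points that must be argued with care: the triviality of superpoint bundles in the presheaf topos (so there genuinely is a global trivialization and no gluing data), and the assertion that all $\M$-structures on such a bundle are isomorphic over $\A^{n|q}$ with automorphism monoid exactly $\M \wr \Sigma_k$ — equivalently, that the relevant groupoid of families is pointwise equivalent to the one-object groupoid with automorphisms $(\M \wr \Sigma_k)(\A^{n|q})$, decorated by the free choice of a map to $X$. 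The remaining ingredients (the decomposition by $k$, the exponential law, pointwise computation of colimits) are routine.
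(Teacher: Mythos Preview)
The paper does not supply a proof of this proposition: it is stated as the ``explicit description'' that follows immediately from the definition of $\Bord_{(\M, X)}^{0|1}$ and the preceding discussion of what an $\A^{n|q}$-family of bordisms and an $\M$-isometry are. Your argument is correct and simply spells out the details the paper leaves implicit --- the decomposition by number of components, the identification of a $k$-component family as an $\A^{n|q}$-point of $\usCart(\coprod_k \A^{0|1}, X)$, and the fact that $\M$-isometries constitute precisely the wreath product action --- so your approach is exactly the one the paper intends.

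One small comment: your caution about bundle triviality and about all $\M$-structures being standard is well taken, but in the paper's setup these issues are dissolved by definition rather than by argument. The bordisms are taken from the outset to be trivial families $\A^{n|q} \times \coprod_k \A^{0|1}$ (no bundle data is ever introduced), and the $\M$-structure is defined so that it is isomorphic to the standard one; thus the only content is the quotient by $\M$-isometries, which the paper has already described as permutations followed by factorwise elements of $\M$.
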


 The commutative monoid structure on $\Bord_{(\M, X)}^{0|1}$ is induced by the disjoint union operation on bordisms, and combining this with the above explicit description we obtain:

\begin{cor}
	$\Bord_{(\M, X)}^{0|1}$ is the free commutative monoid generated by the \sacs
	\begin{equation*}
		\usCart(\A^{0|1}, X)/ \M.
	\end{equation*}
\end{cor}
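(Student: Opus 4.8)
The statement follows almost immediately from the two preceding results together with the universal property of the free commutative monoid functor. The plan is to unwind the explicit formula of Proposition~\ref{prop:Bord_as_free_comm_monoid} and recognize it as the standard presentation of the free commutative monoid (internal to $\sCart$) on the object $Y := \usCart(\A^{0|1}, X)/\M$.

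First I would recall that in any cocomplete cartesian closed category $\cC$ (and $\sCart$ is such, being a presheaf topos, by Example~\ref{ex:innerhom} and the discussion of (co)limits) the free commutative monoid on an object $Y$ is computed as
\begin{equation*}
\mathrm{Sym}(Y) \;=\; \coprod_{k \in \N} Y^{\times k}/\Sigma_k,
\end{equation*}
where $\Sigma_k$ acts on the $k$-fold product $Y^{\times k}$ by permuting factors and the quotient is the (categorical, i.e. pointwise) colimit over the action groupoid; the monoid multiplication is induced by the canonical maps $Y^{\times j} \times Y^{\times k} \to Y^{\times(j+k)}$, and its universal property is the standard one. Since all colimits and finite products in $\sCart$ are computed pointwise, this formula for $\mathrm{Sym}(Y)$ is computed pointwise as well, so there is nothing subtle about forming it.

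Next I would simply compare this with the second line of Proposition~\ref{prop:Bord_as_free_comm_monoid}: that proposition gives
\begin{equation*}
\Bord_{(\M, X)}^{0|1} \;\cong\; \coprod_{k \in \N}\Bigl( \prod_k [\usCart(\A^{0|1}, X)/\M] \,/\, \Sigma_k \Bigr) \;=\; \coprod_{k\in\N} Y^{\times k}/\Sigma_k \;=\; \mathrm{Sym}(Y),
\end{equation*}
and the commutative monoid structure coming from disjoint union of bordisms is exactly the one induced by concatenation $Y^{\times j}\times Y^{\times k}\to Y^{\times(j+k)}$, because disjoint union of $\A^{n|q}$-families of $0|1$-bordisms over $X$ is literally juxtaposition of the indexing sets, compatibly with $\M$-isometries. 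Hence the identification of Proposition~\ref{prop:Bord_as_free_comm_monoid} is an isomorphism of commutative monoid objects in $\sCart$, and $\Bord_{(\M,X)}^{0|1}$ has the universal property of the free commutative monoid on $Y$.

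The only point requiring a little care — and the one I would expect to be the main (minor) obstacle — is checking that the monoidal structure transported across the isomorphism of Proposition~\ref{prop:Bord_as_free_comm_monoid} genuinely agrees with the canonical one on $\mathrm{Sym}(Y)$, rather than merely that the underlying objects are isomorphic. This is a matter of chasing the definition of disjoint union on $\A^{n|q}$-families through the quotient by $\M\wr\Sigma_k$ and observing it is the block-concatenation map; no real calculation is involved, but one should state it explicitly. Given that, the corollary is immediate.
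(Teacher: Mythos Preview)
Your proposal is correct and matches the paper's own reasoning: the paper does not give a separate proof of this corollary but simply observes that the explicit description of Proposition~\ref{prop:Bord_as_free_comm_monoid}, together with the fact that the monoid structure is induced by disjoint union of bordisms, is exactly the standard formula for the free commutative monoid on $\usCart(\A^{0|1}, X)/\M$. Your additional care in verifying that the disjoint-union multiplication agrees with concatenation on $\mathrm{Sym}(Y)$ is more detail than the paper supplies, but is the right point to check.
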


\begin{cor}
	The supersymmetric $0|1$-dimensional $\M$-quantum field theories over a \sacs~$X$ (with values in $\cO$) are in natural bijection with the set of $\M$-invariant functions on $\usCart(\A^{0|1}, X)$. 
\end{cor}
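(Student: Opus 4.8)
The plan is to deduce the corollary formally from the two preceding results together with two universal properties. By the corollary immediately above, $\Bord_{(\M,X)}^{0|1}$ is the free commutative monoid object in $\sCart$ on the \sacs
\[
Q := \usCart(\A^{0|1},X)/\M .
\]
Taking the free commutative monoid is left adjoint to the forgetful functor from commutative monoid objects in $\sCart$ to $\sCart$, so a homomorphism of commutative monoids $Z \colon \Bord_{(\M,X)}^{0|1} \to \cO$ is the same data, naturally, as a morphism of \sacss~$\bar Z \colon Q \to \cO$ (regarding $\cO$ merely as a \sacs). Thus the problem reduces to identifying $\sCart(Q,\cO)$ with the set of $\M$-invariant functions on $\usCart(\A^{0|1},X)$.

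Next I would unwind the categorical quotient. By definition $Q$ is the coequalizer (a colimit in $\sCart$) of the pair of maps
\[
\M \times \usCart(\A^{0|1},X) \rightrightarrows \usCart(\A^{0|1},X)
\]
given by the $\M$-action (precomposition, cf.~the $\M$-prestructure $\M \subseteq \usCart(\A^{0|1},\A^{0|1})$ and Proposition~\ref{prop:Bord_as_free_comm_monoid}) and the projection. Since $\sCart$ is a presheaf topos, $\sCart(-,\cO)$ carries this colimit to a limit, so $\sCart(Q,\cO)$ is the equalizer of
\[
\sCart(\usCart(\A^{0|1},X),\cO) \rightrightarrows \sCart(\M \times \usCart(\A^{0|1},X),\cO) ,
\]
i.e.\ exactly the subset of those morphisms $\usCart(\A^{0|1},X) \to \cO$ that are invariant under the $\M$-action.

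Finally I would identify such morphisms with functions: a map $\usCart(\A^{0|1},X) \to \cO$ is by definition an element of the ring of global functions $\cO(\usCart(\A^{0|1},X))$ (which, by Corollary~\ref{cor:functions_on_superpoints}, is $\sCart(X,\Omega)$), and under this identification the equalizer condition above is precisely the condition that the function be $\M$-invariant. Composing with the bijection from the first paragraph then yields the claimed natural bijection.

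I do not expect a genuine obstacle here: every step is a universal property plus the fact that $\sCart$ is a (cohesive) presheaf topos. The only points requiring a moment's care are (a) that when we invoke the free--forgetful adjunction we map into $\cO$ only as a \sacs, discarding its own monoid structure, so the bijection really is with \emph{all} maps $Q \to \cO$; and (b) that ``$\M$-invariant'' in the equalizer sense agrees with the pointwise statement $f(m\cdot y)=f(y)$ on $S$-points, which holds because both describe the same subfunctor of $\cO(\usCart(\A^{0|1},X))$.
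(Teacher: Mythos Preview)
Your proposal is correct and is exactly the unpacking of the argument the paper leaves implicit: the paper states this corollary without proof, as an immediate consequence of the preceding corollary that $\Bord_{(\M,X)}^{0|1}$ is the free commutative monoid on $\usCart(\A^{0|1},X)/\M$, and your two steps (free--forgetful adjunction, then universal property of the categorical quotient) are precisely what ``immediate'' means here.
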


Note that this implies that the field theories naturally have the structure of a commutative ring. Using the description from this last corollary and our previous calculations we may now identify the supersymmetric $0|1$-dimensional $\M$-quantum field theories over a simplicial set. 

\begin{thm} \label{thm:field-theories}
	Let $X$ be a finite dimensional simplicial set. Then the set of supersymmetric $0|1$-dimensional $\M$-quantum field theories over $i_! X$ is naturally isomorphic (as supercommutative algebras with a coaction of $\Sect(N(\M)/\M)$) to...
	\begin{enumerate}
		\item $\M = \uEnd(\A^{0|1}) \cong \A^{0|1} \rtimes \A^1$ {(pre-topological)}
		\begin{equation*}
			\pTFT{}(X) \cong \Omega^0_{R,\text{cl}}(X)
		\end{equation*}
		 closed, degree zero polynomial differential forms on $X$ over $R$.
		\item $\M = \uAut(\A^{0|1}) \cong \A^{0|1} \rtimes \G_m$ (topological)
		\begin{equation*}
			\TFT{}(X) \cong \Omega^0_{R,\text{cl}}(X)
		\end{equation*}
		 closed, degree zero polynomial differential forms on $X$ over $R$.
		\item $\M = \A^{0|1} \rtimes \Z/2\Z$  (Euclidean)
		\begin{equation*}
			\EFT{}(X) \cong \Omega^{\text{ev}}_{R,\text{cl}}(X)
		\end{equation*}
		 closed polynomial differential forms on $X$ over $R$ of even degree.		
		\item $\M  = \A^{0|1} \times 1$ (oriented Euclidean) 
			\begin{equation*}
				\EFT{}_{\text{or}}{}(X) \cong \Omega^{*}_{R,\text{cl}}(X)
			\end{equation*}
			 closed polynomial differential forms on $X$ over $R$ of arbitrary degree.
		\item $\M = 1$ (fully-rigid)
		 		\begin{equation*}
					\QFT{}_{\text{f-r}}{}(X) \cong \Omega^{*}_{R}(X)
				\end{equation*}
				all polynomial differential forms on $X$ over $R$.
	\end{enumerate}
\end{thm}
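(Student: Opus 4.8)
The plan is to combine three facts that are already in place. By the corollary to Proposition~\ref{prop:Bord_as_free_comm_monoid} identifying $\M$-quantum field theories with $\M$-invariant functions, a supersymmetric $0|1$-dimensional $\M$-quantum field theory over $i_!X$ is the same datum as an $\M$-invariant function in $\cO(\usCart(\A^{0|1},i_!X))$, and this set carries a commutative ring structure. By Corollary~\ref{maincor}, $\cO(\usCart(\A^{0|1},i_!X))\cong u\,\sSet(X,\Omega^{*}_{\bullet})=\Omega^*_R(X)$; and because $X$ is finite dimensional, Proposition~\ref{prop:finitedim} applies (this is the only place the dimension hypothesis is needed), so the $\uEnd(\A^{0|1})$-action on $\usCart(\A^{0|1},i_!X)$ descends to a coaction of $\cO(\uEnd(\A^{0|1}))=R[x,\e]$ on $\Omega^*_R(X)$; by the final proposition of Section~\ref{sec:forms} and Proposition~\ref{prop:calc} this coaction is Sullivan's super cdga structure on $\Omega^*_R(X)$, with $x$ encoding the $\N$-grading and $\e$ the odd degree-one differential $d$. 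So the theorem reduces to identifying, inside the super cdga $\Omega^*_R(X)$, the $\M$-invariants for each of the five submonoids of Definition~\ref{def:geometry}.

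First I would make the passage to coinvariants precise. A function $f\in\cO(\usCart(\A^{0|1},i_!X))$ is $\M$-invariant exactly when the two maps $\M\times\usCart(\A^{0|1},i_!X)\rightrightarrows\usCart(\A^{0|1},i_!X)\xrightarrow{f}\cO$ agree. Each ring $\cO(\M)$ occurring here --- namely $R[x,\e]$, $R[x,x^{-1},\e]$, $(R\times R)[\e]$, $R[\e]$, or $R$ --- is free, hence flat, over $R$, so $\cO$ turns this product into a tensor product, and $\M$-invariance becomes membership in the equalizer of $a\mapsto 1\otimes a$ and of the $R[x,\e]$-coaction followed by restriction of scalars along the appropriate one of the bialgebra maps
\[
R[x,\e]\to R[x,x^{-1},\e]\to (R\times R)[\e]\to R[\e]\to R
\]
displayed after Definition~\ref{def:geometry}. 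In other words the $\M$-quantum field theories are the $\cO(\M)$-coinvariants of $\Omega^*_R(X)$, equivalently $\cO$ applied to the quotient \sacs~$\usCart(\A^{0|1},i_!X)/\M$; the residual coaction claimed in the statement is then simply $\cO$ applied to the action of $N(\M)/\M$ on that quotient, recorded at the end of Section~\ref{sec:geometries}.

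Next I would run the five cases with the dictionary of Proposition~\ref{cdgastructure} and the corollary following Definition~\ref{def:geometry}. Write a form in homogeneous pieces $\omega=\sum_k\omega_k$, a finite sum by finite dimensionality, and use Proposition~\ref{prop:calc}: invariance under the $\A^1$- or $\G_m$-factor demands $\omega_k x^k=\omega_k$, forcing $\omega_k=0$ for $k\neq 0$ (so the pre-topological and topological answers coincide --- connectivity makes the difference between $\A^1$ and $\G_m$ invisible); invariance under the $\Z/2$-factor demands $(-1)^k\omega_k=\omega_k$, forcing $\omega_k=0$ for odd $k$; and invariance under the $\A^{0|1}$-factor is precisely $d\omega=0$. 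Assembling: $\M=\uEnd(\A^{0|1})$ and $\M=\uAut(\A^{0|1})$ give $\Omega^0_{R,\text{cl}}(X)$; $\M=\A^{0|1}\rtimes\Z/2$ gives $\Omega^{\text{ev}}_{R,\text{cl}}(X)$; $\M=\A^{0|1}$ gives $\Omega^*_{R,\text{cl}}(X)$; and $\M=1$ gives all of $\Omega^*_R(X)$. The residual $\Sect(N(\M)/\M)$-coaction is read off from Section~\ref{sec:geometries}: the full $\Z$-graded super cdga structure from $N(1)/1=\A^{0|1}\rtimes\G_m$; the $\Z$-grading from $N(\A^{0|1})/\A^{0|1}=\G_m$; the grading by even integers from $N(\A^{0|1}\rtimes\Z/2)/(\A^{0|1}\rtimes\Z/2)\cong\G_m$ (when $2\in R^\times$); and nothing further in the topological and pre-topological cases, where $N(\M)/\M$ is trivial. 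All these identifications go through ring maps and are natural in $X$, giving the asserted isomorphisms of supercommutative algebras with coaction.

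I expect the main obstacle to be exactly the interface of the second paragraph: verifying that the geometric notion of invariance under a monoid action on a superalgebraic cartesian set coincides with the algebraic notion of coinvariants for a coaction. This is where the flatness of $\cO(\M)$ and --- more essentially --- the finite-dimensionality of $X$ via Proposition~\ref{prop:finitedim} (which genuinely fails for infinite-dimensional $X$, as the example after it shows) do real work. A secondary point worth flagging is that in the two Euclidean cases the $\Z/2$-invariance pins down the even forms on the nose only when $2$ is a non--zero-divisor in $R$, in particular when $R$ is a $\Q$-algebra as in the main theorem.
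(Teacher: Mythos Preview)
Your proposal is correct and follows essentially the same approach as the paper: reduce to computing the $\cO(\M)$-coinvariants in $\Omega^*_R(X)$ via the corollary to Proposition~\ref{prop:Bord_as_free_comm_monoid}, Corollary~\ref{maincor}, and Proposition~\ref{prop:finitedim}, and then run through the five cases using the explicit coaction formula from Proposition~\ref{cdgastructure}. The paper's proof is terser---it simply writes down the image of a $k$-form under each coaction and reads off the conditions---whereas you spell out more carefully the passage from geometric $\M$-invariance to algebraic coinvariance (via flatness of $\cO(\M)$) and the residual $N(\M)/\M$-structure; you also correctly flag the $2$-torsion caveat in the Euclidean case, which the paper does not mention.
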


\begin{remark}
When $X$ is an infinite dimensional simplicial set we may write it as the colimit over finite dimensional skeleta and then the theorem still holds as long as $\Omega^{\text{ev}}_{R,\text{cl}}(X)$ means the product over even closed polynomial forms instead of the sum (and likewise for the last two cases).
\end{remark}

\begin{proof}
Recall that Proposition \ref{cdgastructure} gives an explicit description of the coaction of $\Sect(\underline \End(\A^{0|1}))$ on the supercommutative algebra of rational differential forms $u\Omega^{*}_{R}(X)$. In all of the cases above we compute the coinvariants for the respective coaction. For all of the following, let $a \in u\Omega^{*}_{R}(X)$.

\begin{enumerate}

\item Let $a$ be a $k$-form, then
\[
a \mapsto ax^k+(da)x^k\e.
\]
To be coinvariant $k=0$ and $da =0$.

\item This follows from 1.

\item Let $a$ be a $k$-form, then 
\[
a \mapsto a (1,-1)^k + (da)(1,-1)^k\e.
\]
To be coinvariant, $k \in 2\Z$ and $da =0 $.

\item Let $a$ be any form, then
\[
a \mapsto a+(da)\e.
\]
To be coinvariant, $da=0$.

\item This is Corollary \ref{maincor}.
\end{enumerate}

\end{proof}

\section{Twisted Field Theories} \label{sec:twists}

In Section~\ref{sec:SQFT2} we saw how a $0|1$-dimensional supersymmetric $\M$-quantum field theory assigned to each $S$-family of $\M$-bordisms over $X$ a function on $S \in \sA$. A function on $S$ is a map from $S$ to $\cO$, a section of the trivial $\cO$-line bundle over $S$. A {\em twisted} field theory is similar, except that we allow the $\cO$-line bundles to be non-trivial. 

Following \cite[\S5]{Stolz-Teichner-Survery2} a {\em twisted field theory} is defined to be a natural transformation between certain functors, the {\em twist functors}. Moreover these twist functors are functors of symmetric monoidal categories internal to \sacss, and this natural transformation is a transformation in the internal sense. The target symmetric monoidal category, $\PPic_\cO$, was introduced in Section \ref{sec:scommalg-in-sacs}. The source category is an enhancement of $\Bord_{(\M, X)}^{0|1}$ to be an internal symmetric monoidal category in \sacss.

\subsection{The bordism category} In Section~\ref{sec:SQFT2} we introduced the bordism category $\Bord_{(\M, X)}^{0|1}$ as a commutative monoid internal to \sacss. We will now promote this to a category internal to \sacss, which we denote by $\BBord_{(\M, X)}^{0|1}$ to distinguish it from our previous definition. 

For $S \in \sA$, the $S$-points of $\Bord_{(\M, X)}^{0|1}$ consisted of the equivalence classes of $S$-families of $0|1$-dimensional bordisms equipped with $\M$-structures and maps to $X$. The equivalence relation was determined by  $S$-families of $\M$-isometries. A similar description applies to $\BBord_{(\M, X)}^{0|1}$ only now instead of forming the quotient by the $\M$-isometries, the $\M$-isometries form the morphisms between the objects of $\BBord_{(\M, X)}^{0|1}(S)$. Furthermore, $X$ is now permitted to be any \sacp. 

If $Z$ is a \sacs~and $M$ is a superalgebraic cartesian monoid acting on $Z$, let $Z \mmod M$ denote the {\em action category}, internal to \sacss, whose objects are $Z$ and morphisms are $Z \times M$. The source and target maps are given by projection and the action and the composition in given by the monoid structure of $M$. In complete analogy to Proposition~\ref{prop:Bord_as_free_comm_monoid} we have:

\begin{prop}
	The category internal to \sacss, $\BBord_{(\M, X)}^{0|1}$, is given by 
	\begin{align*}
		\BBord_{(\M, X)}^{0|1} &\cong  \coprod_{k \in \N} \left( \prod_k \usCart(\A^{0|1}, X) \mmod \M \wr \Sigma_k \right) 
	\end{align*}
	In short $\BBord_{(\M, X)}^{0|1}$ is the free symmetric monoidal category, internal to \sacss, generated by the category $\usCart(\A^{0|1}, X) \mmod \M$.
\end{prop}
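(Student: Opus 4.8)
The plan is to verify the claimed isomorphism of categories internal to \sacss~by evaluating both sides on $S$-points for each representable $S = \A^{n|q} \in \sA$, and to do this in two stages, first matching objects and then matching morphisms, composition, source and target. The key structural input is that every representable \sacs~is connected (equivalently $\pi_0 \A^{n|q} \cong pt$, as established when comparing with simplicial sets), so that $\sCart(\A^{n|q}, -)$ commutes with the coproduct $\coprod_{k \in \N}$; this is the same mechanism that already produces the coproduct decomposition in Proposition~\ref{prop:Bord_as_free_comm_monoid}. Nothing in the argument will use that $X$ is representable, or even a sheaf, so it applies verbatim to an arbitrary \sacp. Concretely, unwinding the definition of $\BBord_{(\M, X)}^{0|1}$, an object of $\BBord_{(\M, X)}^{0|1}(S)$ is an $S$-family of $0|1$-dimensional bordisms with $\M$-structure and a map to $X$, and a morphism is an $S$-family of $\M$-isometries; since the underlying topological $0$-bordism of any such family is a finite disjoint union of superpoints and $S$ is connected, the whole category $\BBord_{(\M, X)}^{0|1}(S)$ splits as $\coprod_{k \in \N}$ of its full subcategories $\BBord_{(\M, X)}^{0|1}(S)_k$ on the families with exactly $k$ fibrewise components, each of which (by the standing hypothesis that the $\A^{0|1}$-bundles are globally split) is canonically $S \times \coprod_k \A^{0|1}$ equipped with a chosen $\M$-structure on each factor together with a map to $X$.

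Next I would identify $\BBord_{(\M, X)}^{0|1}(S)_k$ with the action category $\bigl(\prod_k \usCart(\A^{0|1},X)\bigr)\mmod(\M\wr\Sigma_k)$ evaluated at $S$. On objects this is exactly the content of the proof of Proposition~\ref{prop:Bord_as_free_comm_monoid}: by the internal-hom adjunction and the fact that $\usCart(-,X)$ sends coproducts to products, a map $S \times \coprod_k \A^{0|1} \to X$ together with the canonical $\M$-structure on each factor is the same datum as an $S$-point of $\prod_k \usCart(\A^{0|1},X)$. On morphisms, an $\M$-isometry between two such $S$-families is, by definition, a permutation of the $k$ superpoint factors followed by an $\M$-isometry on each factor; since an $S$-family of $\M$-isometries of a single superpoint is precisely an $S$-point of $\M$ (the canonical $\M$-structure $(\A^{0|1},\M)$ has automorphism monoid $\M$), an $S$-family of $\M$-isometries of $\coprod_k \A^{0|1}$ is precisely an $S$-point of the wreath product $\M \wr \Sigma_k = (\prod_k \M)\rtimes \Sigma_k$. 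One then reads off directly that source, target and composition agree with those of the action category. Assembling over $k$ and over all representables $S$ gives the displayed isomorphism of internal categories.

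It then remains to match the symmetric monoidal structures and to recognize the right-hand side as the free construction. Disjoint union of bordisms corresponds, under the decomposition by number of components, to the concatenation maps $\prod_k(-)\times\prod_\ell(-)\to\prod_{k+\ell}(-)$ together with the block-permutation inclusions $\M\wr\Sigma_k \times \M\wr\Sigma_\ell \hookrightarrow \M\wr\Sigma_{k+\ell}$, with the braiding supplied by the block transpositions in $\Sigma_{k+\ell}$; this is exactly the symmetric monoidal structure on the free symmetric monoidal category $\coprod_k \mathcal{C}^{\times k}\mmod\Sigma_k$ on an internal category $\mathcal{C}$. Applying this with $\mathcal{C} = \usCart(\A^{0|1},X)\mmod\M$ and using the evident identity $(Z\mmod\M)^{\times k}\mmod\Sigma_k \cong (\prod_k Z)\mmod(\M\wr\Sigma_k)$ yields the second assertion, and the universal property of $\Bord^{0}$ (the forgetful functor to topological $0$-bordisms) identifies the generating category as the asserted one.

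The main obstacle I expect is the bookkeeping at the level of morphisms: one must check that an $S$-family of $\M$-isometries is genuinely nothing more than an $S$-point of $\M\wr\Sigma_k$, i.e.\ that no further, $S$-varying automorphisms of the family appear. This is precisely where the hypothesis that the bordisms are globally split as $\coprod_k \A^{0|1}$ with the canonical $\M$-structure is used essentially, reducing the automorphism monoid of the $S$-family to the fibrewise automorphism monoid of $(\A^{0|1},\M)$, which is $\M$. A secondary, easier point requiring care is the use of connectedness of representables to commute $\sCart(\A^{n|q},-)$ past $\coprod_{k\in\N}$, exactly as in Proposition~\ref{prop:Bord_as_free_comm_monoid}; this is why the whole verification is carried out on representable test objects.
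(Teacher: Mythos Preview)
Your proposal is correct and, in fact, considerably more detailed than what the paper offers: the paper does not prove this proposition at all, but simply states it as holding ``in complete analogy to Proposition~\ref{prop:Bord_as_free_comm_monoid}'' (which itself is stated without proof). Your $S$-point verification, splitting over the number of components via connectedness of representables and then identifying the morphisms with $S$-points of the wreath product, is exactly the kind of argument the paper is gesturing at, and your identification of the symmetric monoidal structure with the free construction is the expected one.
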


\subsection{Twisted field theories}
\begin{definition}\label{def:twists}
	A {\em twist} for $0|1$-dimensional supersymmetric $\M$-quantum field theories over $X$ is a functor
	\begin{equation*}
		\tau: \BBord_{(\M, X)}^{0|1} \to \PPic_\cO
	\end{equation*}
	of internal symmetric monoidal categories. 
\end{definition}

\noindent The twists, together with the internal natural transformations of twists, form a symmetric monoidal category. In particular we can take the tensor product of twists. Moreover this symmetric monoidal category is contravariantly functorial in $X$ (since the bordism category is covariantly functorial in $X$). 

\begin{example}
	The {\em trivial twist} $\tau_0: \BBord_{(\M, X)}^{0|1} \to \PPic_\cO$ is the constant symmetric monoidal functor with value the unit object of $\PPic_\cO$. This is the unit object in the symmetric monoidal category of twists. 
\end{example}

\begin{definition}\label{def:twisted-field-Thy}
	Let $\tau$ be a twist for $0|1$-dimensional supersymmetric $\M$-quantum field theories over $X$. A {\em $\tau$-twisted field theory} is an internal natural transformation $Z$ between internal functors:
\begin{center}
\begin{tikzpicture}
		\node (L) at (0, 0) {$\BBord_{(\M, X)}^{0|1}$};
		\node (R) at (4, 0) {$\PPic_\cO$};
		\node  at (2.3, 0) {$\Downarrow Z$};
		\draw [->, bend left] (L) to  node [above] {$\tau_0$} (R);
		\draw [->, bend right] (L) to  node [below] {$\tau$} (R);
\end{tikzpicture}
\end{center}
from the trivial twist $\tau_0$ to the twist $\tau$.
\end{definition}

\begin{example}[c.f. {\cite[Lma 5.7]{Stolz-Teichner-Survery2}}]
	A $\tau_0$-twisted field theory is an internal natural endo-transformation of the constant twist. Since $\cO$ may be identified with the endomorphisms of the unit object of $\PPic_\cO$, such natural transformations amounts to an $\M$-invariant $\cO$-valued function on $\usCart(\A^{0|1}, X)$. Hence $\tau_0$-twisted field theories are precisely the quantum field theories from Section~\ref{sec:SQFT2}.	
\end{example}

Since $\BBord_{(\M, X)}^{0|1}$ is free, a twist is determined by an (internal) functor
\[
\bar{\tau}: \usCart(\A^{0|1}, X) \mmod \M \lra{} \PPic_\cO.
\]
General twists over a general space $X$ can be quite interesting (see \cite{SST} for computation of general twists in the related context of $0|1$-field theories in the sense of HKST, that is in the context of supermanifolds rather than \sacss). 

In the remainder of this work we will only consider the simplest twists, which are pulled back from the case $X = pt$. We will call such twists {\em basic}. They are easy to classify:

\begin{lemma} \label{basics}
	The basic twists are classified (up to isomorphism) by
	\begin{enumerate}
		\item an object $\cL \in \ob \PPic_\cO = \ob \Pic_R$, and 
		\item a representation $\rho:\M \to \uHom(\cL, \cL) \cong \cO$.
	\end{enumerate}
	The tensor product of basic twists tensors these two pieces of data.  
\end{lemma}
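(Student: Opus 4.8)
The plan is to unwind the definition of a basic twist and use the freeness of $\BBord_{(\M, X)}^{0|1}$ from the previous subsection. Since basic twists are by definition pulled back from $X = pt$, we are reduced to classifying internal symmetric monoidal functors $\BBord_{(\M, pt)}^{0|1} \to \PPic_\cO$. By the proposition preceding this lemma, $\BBord_{(\M, pt)}^{0|1}$ is the free symmetric monoidal category (internal to \sacss) generated by the one-object category $\usCart(\A^{0|1}, pt) \mmod \M = pt \mmod \M$, which is simply the monoid $\M$ viewed as a one-object category. Hence an internal symmetric monoidal functor out of it is the same as an internal functor $\bar\tau: \M \to \PPic_\cO$, i.e.\ a choice of object $\cL \in \ob\PPic_\cO$ (the image of the unique object) together with a monoid homomorphism $\M \to \uAut_\cO(\cL)$ recording the action on morphisms. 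This gives exactly the two pieces of data in the statement once we identify $\uHom_\cO(\cL,\cL) \cong \cO$.

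The first key step, then, is to justify that identification: for any invertible $\cO$-module $\cL$ there is a canonical isomorphism $\uHom_\cO(\cL, \cL) \cong \cO$ of $\cO$-modules (indeed of $\cO$-algebras), and the invertible endomorphisms $\uAut_\cO(\cL)$ form the submonoid $\G_m = \cO^\times$ — but since the source $\M$ is a monoid and a twist is a functor, the relevant target for the action-on-morphisms data is $\uEnd_\cO(\cL) \cong \cO$ itself, not just its units; a representation in the sense of the lemma means a map of monoids $\rho: \M \to \uHom(\cL,\cL) \cong \cO$ where the monoid structure on $\cO$ is multiplication. This follows from Theorem~\ref{thm:Picard-equivalence} (every invertible $\cO$-module is $\cO \otimes_R L$ for an invertible $R$-module $L$) together with Lemma~\ref{lem:fully-faithful-enriched-inclusion}, which gives $\uHom_\cO(\cO \otimes_R L, \cO \otimes_R L) \cong \cO \otimes_R \Hom_R(L,L) \cong \cO \otimes_R R = \cO$, the last step using that $L$ is invertible hence $\Hom_R(L,L) \cong R$.

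The second step is the bookkeeping of ``up to isomorphism'': two basic twists $(\cL, \rho)$ and $(\cL', \rho')$ give isomorphic internal functors precisely when there is an isomorphism $\cL \cong \cL'$ in $\PPic_\cO$ (equivalently, by Theorem~\ref{thm:Picard-equivalence}, in $\Pic_R$, which is why we may take $\cL \in \ob\Pic_R$) intertwining $\rho$ and $\rho'$; but since $\uEnd_\cO(\cL) \cong \cO$ is commutative, conjugation is trivial and the condition is just $\cL \cong \cL'$ and $\rho = \rho'$ under the induced identification. Finally, for the statement about tensor products: the tensor product of twists is computed objectwise in $\PPic_\cO$, so on the object data it is $\cL \otimes_\cO \cL'$ (matching $L \otimes_R L'$ in $\Pic_R$), and on the representation data the induced map $\M \to \uHom_\cO(\cL \otimes_\cO \cL', \cL \otimes_\cO \cL') \cong \uHom_\cO(\cL,\cL) \otimes_\cO \uHom_\cO(\cL',\cL') \cong \cO$ is the pointwise product $\rho \cdot \rho'$ — i.e.\ the two pieces of data tensor independently.

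I expect the main obstacle to be purely formal rather than computational: making precise what an ``internal functor'' and ``internal natural transformation'' between symmetric monoidal categories internal to \sacss~amount to at the level of $S$-points, and checking that the freeness statement (the universal property of $\BBord_{(\M,pt)}^{0|1}$ as a free internal symmetric monoidal category) really does reduce an internal symmetric monoidal functor to the data of an internal functor on the generating category. Once that is set up, the classification is immediate from Theorem~\ref{thm:Picard-equivalence} and Lemma~\ref{lem:fully-faithful-enriched-inclusion}, and the tensor product claim is a one-line consequence of objectwise-ness of $\otimes$ in $\PPic_\cO$.
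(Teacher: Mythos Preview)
Your proposal is correct and follows essentially the same route as the paper's proof: reduce to $X = pt$, observe that $\usCart(\A^{0|1}, pt) \mmod \M \simeq pt \mmod \M$, and use freeness of $\BBord_{(\M,pt)}^{0|1}$ to identify a basic twist with an internal functor out of the one-object category $\M$. The paper's proof is a two-sentence version of exactly this; you have simply filled in the justifications that the paper leaves implicit, in particular the identification $\uHom_\cO(\cL,\cL)\cong\cO$ via Theorem~\ref{thm:Picard-equivalence} and Lemma~\ref{lem:fully-faithful-enriched-inclusion}, and the tensor product statement.
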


\begin{proof}
	When $X = pt$ we have $\usCart(\A^{0|1}, X) \simeq pt$, and hence a basic twist is the same as an internal functor $pt \mmod \M \to \PPic_\cO$. Such a functor consists of exactly the claimed data. 
\end{proof}

\begin{thm} \label{thm:twists}
	If $X$ is a finite dimensional simplicial set, the $(\cL, \rho)$-twisted field theories over $X$ are in bijection with the $\cO(\M)$-coinvariants of $\Omega^*_R(X; \cL)$.
\end{thm}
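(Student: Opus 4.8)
The plan is to reduce the twisted case to the untwisted computation of Theorem~\ref{thm:field-theories} by unwinding the definitions and using the freeness of the bordism category together with the classification of basic twists. First I would recall from Lemma~\ref{basics} that the basic twist $\tau = (\cL, \rho)$ is the internal functor $\BBord_{(\M, X)}^{0|1} \to \PPic_\cO$ determined by the internal functor $\usCart(\A^{0|1}, X) \mmod \M \to \PPic_\cO$ sending the unique object to $\cL$ and acting on morphisms via $\rho: \M \to \uHom(\cL,\cL) \cong \cO$. A $\tau$-twisted field theory is an internal natural transformation $Z: \tau_0 \Rightarrow \tau$; since $\BBord_{(\M, X)}^{0|1}$ is the free symmetric monoidal internal category on $\usCart(\A^{0|1}, X) \mmod \M$, such a $Z$ is the same data as an internal natural transformation between the two composite functors out of $\usCart(\A^{0|1}, X) \mmod \M$, i.e. out of the action category. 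This is the step where I would spell out that monoidal-ness forces $Z$ on a $k$-fold disjoint union to be the $k$-fold tensor product of its value on a single superpoint, so everything is controlled by the component at $\usCart(\A^{0|1}, X)$.

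Next I would identify this component explicitly. An internal natural transformation from the constant functor at the unit object to the functor classified by $(\cL, \rho)$, restricted to the object level of $\usCart(\A^{0|1}, X) \mmod \M$, is a map of \sacss{} $\usCart(\A^{0|1}, X) \to \uHom_\cO(\cO, \cL) \cong \cL$, where I use Theorem~\ref{thm:Picard-equivalence} to view $\cL$ as $\cO \otimes_R \cL$ for an invertible $R$-module. By Corollary~\ref{cor:functions_on_superpoints} and the remark following Corollary/Definition~\ref{cor:right-adjoint-to-innerhom} that $\Omega(X; M) \cong \Omega(X; \cO) \otimes_R M$, such maps are precisely the elements of $\Omega^*_R(X; \cL) = \Omega^*_R(X) \otimes_R \cL$ (using Corollary~\ref{maincor} to identify $\cO(\usCart(\A^{0|1}, i_!X))$ with $u\sSet(X, \Omega^*_\bullet)$). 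The naturality condition for $Z$ with respect to the morphisms of the action category $\usCart(\A^{0|1}, X) \mmod \M$, whose morphism object is $\usCart(\A^{0|1}, X) \times \M$, becomes exactly the equation expressing that this element of $\Omega^*_R(X;\cL)$ is fixed by the $\M$-action twisted by $\rho$ — that is, it lies in the $\cO(\M)$-coinvariants of $\Omega^*_R(X;\cL)$. Concretely, using Proposition~\ref{prop:calc} and the computation in the proof of Proposition~\ref{cdgastructure} describing the $\Sect(\A^{0|1} \rtimes \A^1)$-coaction, the twisting by $\rho$ shifts the grading/eigenvalue condition by the character $\rho$, and coinvariance picks out the appropriate graded, closed (or not) forms valued in $\cL$.

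I expect the main obstacle to be bookkeeping rather than anything deep: carefully translating the \emph{internal} natural transformation condition (a 2-categorical/internal-category statement in the topos $\sCart$) into the plain algebraic statement about coinvariants of a coaction, and in particular checking that the freeness of $\BBord_{(\M, X)}^{0|1}$ as an internal symmetric monoidal category genuinely lets one restrict attention to the single generating object without losing or adding data. One must also be slightly careful that the finite-dimensionality hypothesis on $X$ is what guarantees (via Proposition~\ref{prop:finitedim}) that there is an honest coaction of $\cO(\uEnd(\A^{0|1}))$ on $\cO(\usCart(\A^{0|1}, i_!X))$, so that ``$\cO(\M)$-coinvariants'' is well-defined; in the infinite-dimensional case one would have to pass to the weakly graded / inverse-limit version as in the remark after Theorem~\ref{thm:field-theories}. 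Once these identifications are in place, the theorem follows by the same coinvariant computation as in the proof of Theorem~\ref{thm:field-theories}, now carried out with coefficients in $\cL$ and with the $\M$-action modified by $\rho$.
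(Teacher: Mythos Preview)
Your proposal is correct and follows essentially the same route as the paper: reduce via freeness of $\BBord_{(\M, X)}^{0|1}$ to a natural transformation out of the action category $\usCart(\A^{0|1}, X)\mmod\M$, identify its component as a map $\usCart(\A^{0|1}, X)\to\uHom_\cO(\cO,\cL)\cong\cL$, hence an element of $\Omega^*_R(X;\cL)$, and then recognize the naturality square as exactly the $\rho$-twisted coinvariance condition (with Proposition~\ref{prop:finitedim} supplying the honest coaction under the finite-dimensionality hypothesis). The paper's proof differs only in that it writes out the relevant commuting square explicitly and observes that $\uHom_\cO(\cL,\cL)\times\uHom_\cO(\cO,\cL)\to\uHom_\cO(\cO,\cL)$ is just the $\cO$-action on $\cL$.
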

\begin{proof}
Recall that $\cL$ may be viewed as an invertible $\cO$-module or as an invertible $R$-module. Recall the isomorphisms
\[
\Omega^*_R(X; \cL) \cong \Omega^{*}_{R}(X)\otimes_R \cL \cong \Omega(X;\cL).
\]
The natural coaction of $\cO(\M)$ on $\cO(\usCart(\A^{0|1}, X))$ from Proposition \ref{prop:finitedim} extends to a coaction on $\Omega^*_R(X; \cL)$ by tensoring up to $\cL$ and using the isomorphisms above.

Let $\tau$ be a basic twist. A natural transformation $\tau_0 \Rightarrow \tau$ of symmetric monoidal functors (internal to $\sCart$) is determined by a natural transformation of functors (internal to $\sCart$) $\bar{\tau_0} \Rightarrow \bar{\tau}$. Since $\tau$ is basic, Lemma \ref{basics} implies that it is determined by an object $\cL \in \ob \PPic_\cO$ and a representation $\rho$ of $\M$. Thus an internal natural transformation is determined by a map of superalgebraic cartesian sets
\[
\usCart(\A^{0|1}, X) \lra{n} \uHom_{\cO}(\cO,\cL) \cong \cL
\]
such that the following diagram commutes:
\begin{center}
\begin{tikzpicture}
	\node (LT) at (0, 1.5) {$\M \times \usCart(\A^{0|1}, X)$};
	\node (LB) at (0, 0) {$\usCart(\A^{0|1}, X)$};
	\node (RT) at (6, 1.5) {$\uHom_{\cO}(\cL,\cL)\times\uHom_{\cO}(\cO,\cL)$};
	\node (RB) at (6, 0) {$\uHom_{\cO}(\cO,\cL)$,};
	\draw [->] (LT) -- node [left] {$t$} (LB);
	\draw [->] (LT) -- node [above] {$\rho \times n$} (RT);
	\draw [->] (RT) -- node [right] {$t$} (RB);
	\draw [->] (LB) -- node [below] {$n$} (RB);
\end{tikzpicture}
\end{center}
where $t$ is the action map. But the action of $\uHom_{\cO}(\cL,\cL)$ on $\uHom_{\cO}(\cO,\cL)$ is the action of $\cO$ on $\cL$, so the square becomes
\begin{center}
\begin{tikzpicture}
	\node (LT) at (0, 1.5) {$\M \times \usCart(\A^{0|1}, X)$};
	\node (LB) at (0, 0) {$\usCart(\A^{0|1}, X)$};
	\node (RT) at (6, 1.5) {$\cO \times \cL$};
	\node (RB) at (6, 0) {$\cL$.};
	\draw [->] (LT) -- node [left] {$t$} (LB);
	\draw [->] (LT) -- node [above] {$\rho \times n$} (RT);
	\draw [->] (RT) -- node [right] {$t$} (RB);
	\draw [->] (LB) -- node [below] {$n$} (RB);
\end{tikzpicture}
\end{center}
But this exactly means that $n$ is $\rho$-coinvariant for the coaction discussed above. 
\end{proof}

\begin{remark}
When $X$ is infinite dimensional then we consider it as a colimit of its finite skeleta. The theorem provides a bijection for each finite skeleton and thus a bijection in the inverse limit. 
\end{remark}

\subsection{Calculation of all twists} We will first consider the case of pre-topological structures $\M \cong \A^{0|1}\rtimes \A^1$. We need to calculate all of the actions of $\A^{0|1}\rtimes \A^1$ on $\A^{1|1}$. 
Such an action consists of a map 
\begin{equation*}
	\mu: \M \times \cO \to \cO
\end{equation*}
which is unital and satisfies three properties:
\begin{enumerate}
	\item it is associative with respect to the multiplication of $\M$;
	\item it is distributes over the addition of $\cO$;
	\item it commutes with the scalar multiplication of $\cO$ on $\cO$. 
\end{enumerate}
This is the same as a unital function: 
\[
R[y, \e] \lra{\mu^*} R[y,\e,x,\de]
\]
satisfying the three conditions.

An arbitrary map $\mu^*$ is described by a pair of values:
\[
y \mapsto f_0(x,y)+f_1(x,y)\de \e
\]
and
\[
\e \mapsto g_0(x,y)\e + g_1(x,y)\de.
\]
and each of the commutative squares give restrictions on the allowed functions $f_0$, $f_1$, $g_0$, and $g_1$. 

The first condition gives the commutative square: 
\[
\xymatrix{R[y,\e,x,\de] \ar[r]^-{\mu^* \otimes 1} & R[y,\e,x_1,\de_1,x_2,\de_2] \\ R[y,\e] \ar[u]^-{\mu^*} \ar[r]^-{\mu^*} & R[y,\e,x,\de]. \ar[u]_-{1\otimes m^*}}
\]
\noindent Going around the diagram the two ways for $y$ gives
\begin{align} \label{eq1}
	& f_0(x_1x_2,y) +\e(f_1(x_1x_2,y)x_1\de_2+f_1(x_1x_2,y)\de_1)  \\
	& = f_0(x_2,f_0(x_1,y)+\e f_1(x_1,y)\de_1)+  \nonumber\\ 
	&(g_0(x_1,y)\e + g_1(x_1,y)\de_1)(f_1(x_2,f_0(x_1,y)+\e f_1(x_1,y)\de_1))\de_2. \nonumber
\end{align}
While for $\e$ we get
\begin{align} \label{eq2}
	& g_0(x_1x_2,y)\e+g_1(x_1x_2,y)(x_1\de_2+\de_1) \\ 
	& = g_0(x_2,f_0(x_1,y)+f_1(x_1,y)\e \de_1)(g_0(x_1,y)\e  +g_1(x_1,y)\de_1) \nonumber \\
	& +g_1(x_2,f_0(x_1,y)+f_1(x_1,y)\e \de_1)\de_2. \nonumber
\end{align}
This puts strong restrictions on the possible $\mu^*$.


Similarly, each of the other two conditions put restrictions on $\mu^*$. Compatibility with respect to the additive structure of $\cO$ forces $f_0$, $f_1$, $g_0$, and $g_1$ to have the following form:
\begin{align*}
	f_0(x,y) &= p(x) y \\
	f_1(x,y) &= q(x) \\
	g_0(x,y) &= r(x) \\
	g_1(x,y) &= s(x)y. 
\end{align*}
While further requiring $\mu^*$ to commute with the scalar $\cO$-multiplication forces
\begin{equation*}
	p(x) = r(x) \quad \text{ and } \quad q(x) = s(x). 
\end{equation*}

Returning to Equations (\ref{eq1}) and (\ref{eq2}), we see that $\mu^*$ defines a unital $\M$-action on the $\cO$-module $\cO$ if and only if
\begin{equation*}
	q(x) = 0, \quad \quad  p(x_1 x_2) = p(x_1) p(x_2),  \text{ and } \quad \quad p(1) = 1. 
\end{equation*}

%

\begin{lemma} \label{polylemma}
Let $R$ be a connected ring of characteristic $0$ and $p(x) \in R[x,x^{-1}]$. If
\[
p(x_1x_2) = p(x_1)p(x_2)
\]
then either $p(x) = 0$ or $p(x) = x^n$ for some $n$.
\end{lemma}
\begin{proof}
This implies that
\[
p(x)^2 = p(x^2).
\]
Since $R$ is connected we have that $p(1) = 1$ or $p(1) = 0$. 

Assume that $p(1) = 0$. Then we see immediately that
\[
p(x) = 0.
\]

Assume that $p(1) = 1$. Let
\[
w(x) = x^mp(x),
\]
then
\[
w(x_1)w(x_2) = w(x_1x_2).
\]
By choosing $m$ large enough, $w(x)$ has the form
\[
rx^n + \text{lower degree terms},
\]
where $n>0$, $r \neq 0$, and the lower degree terms are in positive degree.
The equality $w(x_1)w(x_2) = w(x_1x_2)$ implies that $r^2 = r$, so $r=1$. Now we may take the derivative with respect to $x_1$ $n$-times in order to get the equality
\[
w^{(n)}(x_1x_2)x_{2}^n = w(x_2)w^{(n)}(x_1),
\] 
but this is just the statement that
\[
(n !) \cdot w(x_2) =  x_{2}^n \cdot (n !), 
\]
which is equivalent to $w(x) = x^n$ since we are in characteristic zero. 
This implies the result for $p(x)$ by the definition of $w(x)$.
\end{proof}

This completely determines the basics twists for the pre-topological geometry in characteristic zero. If $R$ is connected, then for each $n \in \N$ there is a \emph{degree $n$ twist}
\begin{align*}
	y &\mapsto x^n y \\
	\epsilon & \mapsto x^n \epsilon
\end{align*}

A similar calculation determines the possible basic twists for the remaining geometries. Below is a table containing the forms of the $\cO$-linear actions of $\M$ on $\A^{1|1}$ for each of the geometries $\M = \A^{0|1}\rtimes \Z/2$ and $\M=\A^{0|1}$. 

\begin{table}[h] 
\begin{center}
{	\small
\begin{tabular}{|l|l|}
\hline
Geometry & Coaction \\
\hline
\hline
$\A^{0|1} \rtimes \Z/2$ & $y \mapsto y$ \\ 
$R[x,\de]/(x^2-1,\de^2)$ & $\e \mapsto \e $\\ \cline{2-2} 
& $y \mapsto xy$ \\
& $\e \mapsto x \e$ \\
\hline
$\A^{0|1}$ & $y \mapsto y$ \\
& $\e \mapsto \e$ \\
\hline
\end{tabular}
}
\end{center}
\end{table}

We will refer to these as the degree $n$ twists, where in the pre-topological geometry $n \in \N$, in the topological geometry $n \in \Z$, in the Euclidean geometry $n \in \{ \text{odd, even} \}$, and in the oriented Euclidean geometry there is only one twist. 

Translating this to twisted field theories yields:

\begin{thm}\label{thm:twisted-field-theories}
	Over a ring $R$ of characteristic zero, for each geometry and each degree $n$ basic twist, the twisted field theories are given in Table~\ref{table:degree_twisted_Field-Theories}. 
	In the case of the pretopological geometry these pick out closed forms $\omega \in \Omega^k$ (with even super grading) and $\alpha \in \Omega^k$ (with odd super grading), $k\in \mathbb{N}$. When $\Omega^* = \Omega^*_R(X,\cL)$ is the cdga of differential forms on a simplicial set $X$, there are only forms of odd super grading in odd degrees and only forms of even super grading in even degrees. Thus fixing $k$ picks out precisely the forms of degree $k$ (one of $\omega$ or $\alpha$ must be zero). 

	The topological geometry behaves in precisely the same way and, of course, there are only forms in nonnegative degrees when the cdga consists of the forms on a simplicial set. 

	In the case of the Euclidean geometry taking both $\omega$ and $\alpha$ to be even gives the even forms on $X$ and taking both to be odd gives the odd forms on $X$.
\end{thm}

We will denote the collection of degree $n$ twisted field theories with a geometry by a superscript $n$. Thus $\pTFT{}^{n}(X)$ denotes the degree $n$  pretopological field theories.

\begin{table}[pthb] 
\begin{center}
{	\small
\begin{tabular}{|l|l|}
\hline
Geometries & Twisted Field Theories \\
\hline
\hline
$\A^{0|1}\rtimes \A^{1}$ & $\omega \in \Omega^k$, $\alpha \in \Omega^k$ \\ Pretopological & $d\omega = 0$, $d\alpha = 0$, $k \in \N$ \\ 
\hline
$\A^{0|1} \rtimes \G_m$  & $\omega \in \Omega^k$, $\alpha \in \Omega^k$ \\ Topological & $d\omega = 0$, $d\alpha = 0$, $k \in \Z$ \\ 
\hline
$\A^{0|1} \rtimes \Z/2$ & $\omega \in \Omega^{\text{even or odd}}$, $\alpha \in \Omega^{\text{even or odd}}$ \\ Euclidean & $d\omega = 0$, $d\alpha = 0$ \\ 
\hline
$\A^{0|1}$ & $\omega \in \Omega^*$, $\al \in \Omega^*$ \\ oriented Euclidean & $d\omega = 0$, $d\alpha = 0$ \\ 
\hline
\end{tabular} \\[2pt]
Here the super grading of $\omega$ is even, while that of $\alpha$ is odd.
}
\end{center}
	\caption{General form of basic twists}
	\label{table:degree_twisted_Field-Theories}
\end{table}

\section{Concordance} \label{sec:concordance}


Theorems~\ref{thm:field-theories} and~\ref{thm:twisted-field-theories} and Table~\ref{table:degree_twisted_Field-Theories} show that the twisted superalgebraic cartesian quantum field theories with a geometry over a simplicial set $X$ correspond to important subsets of Sullivan's rational differential forms on $X$. To recover the rational cohomology groups of $X$ from the field theories we study a notion of equivalence of field theories called concordance. In this algebraic setting we uncover three notions of concordance. We prove that they are all equivalent. In each case two closed differential forms are concordant if and only if they are cohomologous. 

Given a simplicial set $X$, we may consider the two inclusions
\[
f_0,f_1:X \lra{} X \times \Delta^1
\]
induced by the coface maps of $\Delta^1$. Now, using the the canonical map 
\[
i_!(X \times \Delta^1) \lra{} i_!X \times \A^1
\]
and the canonical map $\Omega^*(i_!X) \otimes \Omega^*(\A^1) \lra{} \Omega^*(i_!X \times \A^1)$ we build the commutative diagram:
\[
\xymatrix{ & \Omega^*(i_!X) \otimes \Omega^*(\A^1) \ar[d] \ar@/^2pc/[dddr] \ar@/_2pc/[dddl] & \\ & \Omega^*(i_!X \times \A^1) \ar[d] & \\ & \Omega^{*}(i_!(X \times \Delta^1)) \ar[dr]^{f_0} \ar[dl]_{f_1} & \\ \Omega^{*}(i_!X) && \Omega^{*}(i_!X).}
\]

Note that the downward arrows need not be isomorphisms. We use this diagram to describe the three notions of concordance for two differential forms $\omega_0, \omega_1 \in \Omega^{*}_{\text{cl}}(i_!X)$. They fit nicely into a table:
\begin{center}
\begin{tabular}{|l|l|}
\hline
Cohomologous & $\exists \al, \text{ }\omega_0 - \omega_1 = d\alpha$  \\
\hline
Cochain Concordance & $\exists \omega \in \Omega^{*}_{\text{cl}}(i_!X) \otimes \Omega^{*}_{\text{cl}}(\A^1), \text{ } f_j \omega = \omega_j$\\
\hline
Algebraic Concordance & $\exists \omega \in \Omega^{*}_{\text{cl}}(i_!X \times \A^1), \text{ } f_j \omega = \omega_j$ \\
\hline
Simplicial Concordance & $\exists \omega \in \Omega^{*}_{\text{cl}}(i_!(X \times \Delta^1)), \text{ } f_j \omega = \omega_j$ \\
\hline
\end{tabular}
\end{center}

It is immediate that Cochain Concordance implies Algebraic Concordance implies Simplicial Concordance.

\begin{prop}
Cohomologous implies Cochain Concordance.
\end{prop}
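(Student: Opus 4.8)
The plan is to exhibit the classical cochain-homotopy witness. Suppose $\omega_0,\omega_1\in\Omega^{*}_{\mathrm{cl}}(i_!X)$ are cohomologous, say $\omega_0-\omega_1=d\alpha$ with $\alpha\in\Omega^{*}(i_!X)$. Decomposing everything by form degree (which is legitimate even when $X$ is infinite dimensional, working in the weakly graded super cdga $\Omega^{*}(i_!X)$), it suffices to treat the case where $\omega_0,\omega_1$ are homogeneous of some degree $n$ and $\alpha\in\Omega^{n-1}(i_!X)$; the general case is the sum of these over $n$ (here one uses that $d$ raises degree by one, so $d\alpha^{(n-1)}=\omega_0^{(n)}-\omega_1^{(n)}$ componentwise). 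Write $t$ for the standard coordinate on $\A^1=i_!\Delta^1$, so that $\Omega^{*}(\A^1)=R[t,dt]$ with $|t|=0$, $|dt|=1$, and de Rham differential determined by $t\mapsto dt$. Under the commutative diagram at the start of this section, the maps labelled $f_0,f_1$ take an element of $\Omega^{*}(i_!X)\otimes\Omega^{*}(\A^1)$ and restrict its $\A^1$-coordinate to the two vertices of $\Delta^1$, i.e. they are induced by the $R$-algebra maps $\Omega^{*}(\A^1)\to R$ sending $(t,dt)$ to $(0,0)$, respectively $(1,0)$.

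First I would set
\[
\omega \;=\; \omega_0\otimes 1\;-\;(\omega_0-\omega_1)\otimes t\;-\;(-1)^{\,n-1}\,\alpha\otimes dt\;\in\;\Omega^{*}(i_!X)\otimes_R\Omega^{*}(\A^1).
\]
Next I would check $\omega$ is closed for the total differential on this tensor-product super cdga. Since $\omega_0$, $\omega_1$, and $\omega_0-\omega_1$ are closed and $d\alpha=\omega_0-\omega_1$, the Leibniz rule gives $d\big((\omega_0-\omega_1)\otimes t\big)=(-1)^{n}(\omega_0-\omega_1)\otimes dt$ and $d\big(\alpha\otimes dt\big)=(\omega_0-\omega_1)\otimes dt$; the two resulting contributions to the coefficient of $(\omega_0-\omega_1)\otimes dt$, namely $-(-1)^{n}$ and $-(-1)^{n-1}$, cancel, so $d\omega=0$. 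Hence $\omega$ is a closed element of $\Omega^{*}(i_!X)\otimes_R\Omega^{*}(\A^1)$, that is, it lies in $\Omega^{*}_{\mathrm{cl}}(i_!X)\otimes\Omega^{*}_{\mathrm{cl}}(\A^1)$.

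Finally I would verify the boundary values: applying $f_0$ sends $t\mapsto 0$ and $dt\mapsto 0$, killing the last two summands and leaving $f_0\omega=\omega_0$; applying $f_1$ sends $t\mapsto 1$ and $dt\mapsto 0$, giving $f_1\omega=\omega_0-(\omega_0-\omega_1)=\omega_1$. (Should the convention identify $f_0$ with the other vertex, one uses the same formula with $t$ replaced by $1-t$; this is harmless, since being cohomologous is symmetric.) Thus $\omega$ exhibits a cochain concordance from $\omega_0$ to $\omega_1$. There is essentially no obstacle in this argument; the only points requiring care are the Koszul signs in the differential on the tensor product and the identification, via the opening diagram, of $f_0,f_1$ with evaluation of the $\A^1$-coordinate at the two vertices — once these are pinned down the computation is immediate.
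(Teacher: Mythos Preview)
Your argument is correct and follows the same approach as the paper: write down the explicit linear interpolation $\omega_0(1-t)+\omega_1 t$ together with a correction term involving $\alpha\otimes dt$, and check directly that it is closed and has the right restrictions. Your treatment is in fact more careful than the paper's one-line proof, since you track the Koszul sign $(-1)^{n-1}$ in front of $\alpha\otimes dt$ needed for the closedness computation to go through for all parities of~$n$.
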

\begin{proof}
The element $\omega_1 t + \omega_0(1-t) + \al dt \in \Omega^{*}_{\text{cl}}(i_!X) \otimes \Omega^{*}_{\text{cl}}(\A^1)$ does the job.
\end{proof}

\begin{prop}
Let $R$ be a $\Q$-algebra, then Simplicial Concordance implies Cohomologous.
\end{prop}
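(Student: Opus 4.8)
The plan is to pass to cohomology and exploit that, over a $\Q$-algebra $R$, Sullivan's polynomial de Rham functor is homotopy invariant, while the two coface inclusions $f_0, f_1 \colon X \to X \times \Delta^1$ are simplicially homotopic. First I would recall the identifications already in hand: by Corollary~\ref{maincor} and the subsequent proposition, $\Omega^*(i_!Y) \cong \sSet(Y, \Omega^{*}_{\bullet})$ as (weakly graded) super cdga's, naturally in the simplicial set $Y$; hence $\Omega^*_{R,\mathrm{cl}}(i_!Y)$ is the group of cocycles of the cochain complex $\Omega^*(i_!Y)$, whose cohomology is $H^*(Y; R)$ by \cite[Thm~7.1]{Sull} (here using that $R$ is rational), again naturally in $Y$. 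In particular the restriction maps $f_0^{*}, f_1^{*} \colon \Omega^*_{R,\mathrm{cl}}(i_!(X \times \Delta^1)) \to \Omega^*_{R,\mathrm{cl}}(i_!X)$ induce on cohomology exactly the maps $H^*(X \times \Delta^1; R) \to H^*(X; R)$ determined by the simplicial maps $f_0$ and $f_1$.

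Next, observe that $f_0$ and $f_1$ are $x \mapsto (x,v_0)$ and $x \mapsto (x,v_1)$ for the two vertices of $\Delta^1$, so the projection $X \times \Delta^1 \to \Delta^1$ is a simplicial homotopy between them; equivalently both are sections of the simplicial homotopy equivalence $\pi \colon X \times \Delta^1 \to X$. Since Sullivan's comparison is natural, homotopic maps of simplicial sets induce the same map on $H^*(\sSet(-, \Omega^{*}_{\bullet}))$, so $H(f_0^{*}) = H(f_1^{*})$ (both equal $H(\pi^*)^{-1}$). Now suppose $\omega \in \Omega^*_{R,\mathrm{cl}}(i_!(X \times \Delta^1))$ witnesses simplicial concordance, i.e.\ $f_j^{*}\omega = \omega_j$. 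Then in $H^*(\Omega^*(i_!X))$ we have $[\omega_0] = H(f_0^{*})[\omega] = H(f_1^{*})[\omega] = [\omega_1]$, which by the very definition of the cohomology of the complex $\Omega^*(i_!X)$ means $\omega_0 - \omega_1 = d\alpha$ for some $\alpha \in \Omega^{*-1}(i_!X)$; that is, $\omega_0$ and $\omega_1$ are cohomologous.

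The only non-formal ingredient is the homotopy invariance of polynomial de Rham cohomology, and this is precisely where the hypothesis that $R$ is a $\Q$-algebra enters. Concretely, a cochain homotopy between $f_0^{*}$ and $f_1^{*}$ on $\sSet(-\times\Delta^1, \Omega^{*}_{\bullet})$ is supplied by fiber integration along the interval direction, and one can take $\alpha = \pm\int_{\Delta^1}\omega$: a suitable Stokes formula $d\!\int_{\Delta^1}\omega = f_1^{*}\omega - f_0^{*}\omega$ (valid because $\omega$ is closed) then exhibits an explicit primitive, giving a second, more hands-on proof. Either way, the integration of polynomials across sub-intervals of simplices is impossible without denominators, which is exactly the obstruction that $R \supseteq \Q$ removes; the only care needed in the explicit route is to set up fiber integration on $\sSet(-\times \Delta^1, \Omega^{*}_{\bullet})$ and verify the Stokes identity. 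For infinite-dimensional $X$ nothing changes: one runs the same argument, the cochain complexes merely being weakly graded.
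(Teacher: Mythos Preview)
Your proof is correct and follows essentially the same route as the paper: both invoke Sullivan's theorem (over a $\Q$-algebra) to identify $H^*(\Omega^*(i_!Y))$ with $H^*(Y;R)$ and then use homotopy invariance for the pair $f_0, f_1$. The paper makes the small cosmetic choice of first reducing to the case $f_0^*\omega = 0$ and then using only that $f_0^*$ is a quasi-isomorphism (so $[\omega]=0$, whence $[\omega_1]=f_1^*[\omega]=0$), whereas you argue directly that $H(f_0^*)=H(f_1^*)$; the content is the same, and your added fiber-integration remark is a nice concrete alternative not in the paper.
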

\begin{proof}
It suffices to take $\omega \in \Omega^{*}(i_!(X \times \Delta^1))$ such that $f_0 \omega = 0$ and $f_1 \omega = \omega_1$. We must show that there exists $\al$ such that $d\alpha = \omega_1$. However, because $X \times \Delta^1 \simeq X$, by Sullivan's theorem \cite{Sull} $f_0$ and $f_1$ are  quasi-isomorphisms. Thus the cohomology class of $\omega_1$ equals the cohomology class of $0$.
\end{proof}


We use square brackets to denote the set of twisted field theories with a given geometry taken up to concordance. Thus $\pTFT{}^{n}[X]$ denotes the degree $n$ pretopological field theories up to concordance. 

\begin{thm}
Let $R$ be a $\Q$-algebra, $HR$ be cohomology with coefficients in $R$, and $X$ be a simplicial set. There are natural isomorphisms
\[
\pTFT{}^n[X] \cong \TFT{}^n[X] \cong HR^n(X)
\]
and
\[
\EFT{}^n[X] \cong PHR^n(X),
\]
where $PHR$ is periodic cohomology with coefficients in $R$.
\end{thm}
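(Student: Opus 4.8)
The plan is to feed the identification of degree-$n$ (twisted) field theories with closed polynomial forms, established in Sections~\ref{sec:SQFT2}--\ref{sec:twists}, into the chain of concordance comparisons assembled above, and to finish with Sullivan's theorem. By Theorem~\ref{thm:twists} applied to the trivial line bundle $\cL=\cO$ with $\rho$ the degree-$n$ representation (equivalently, the ``degree twist'' rows of Table~\ref{table:twisted_Field-Theories}), for a finite-dimensional simplicial set $X$ there are natural isomorphisms
\[
\pTFT{}^n(X)\cong\TFT{}^n(X)\cong\Omega^n_{R,\text{cl}}(i_!X),\qquad
\EFT{}^n(X)\cong\begin{cases}\Omega^{\text{ev}}_{R,\text{cl}}(i_!X)&n\text{ even},\\[2pt]\Omega^{\text{odd}}_{R,\text{cl}}(i_!X)&n\text{ odd},\end{cases}
\]
and $\Omega^*(i_!X)\cong u\,\sSet(X,\Omega^*_\bullet)$ is Sullivan's cdga by Corollary~\ref{maincor}. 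So it remains to compute the concordance quotients of these modules of closed forms; everything will be natural in $X$ because $i_!$, $\usCart(\A^{0|1},-)$, the passage to $\M$-invariants, the three concordance relations, and Sullivan's comparison are all functorial. For infinite-dimensional $X$, write $X$ as the colimit of its skeleta and pass to the inverse limit, as in the remarks after Theorems~\ref{thm:field-theories} and~\ref{thm:twists}; this merely turns ``even/odd closed forms'' and ``periodic cohomology'' into products over the relevant degrees.

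The first substantive step is to match the field-theoretic notion of concordance with the combinatorial ones of Section~\ref{sec:concordance}. A concordance between two degree-$n$ field theories over $X$ is, by definition, a degree-$n$ field theory over the cylinder $X\times\Delta^1$ restricting to the two given ones along the coface inclusions $f_0,f_1:X\rightrightarrows X\times\Delta^1$; transported through the dictionary above, this is exactly a closed degree-$n$ form $\omega\in\Omega^n_{\text{cl}}(i_!(X\times\Delta^1))$ with $f_j\omega=\omega_j$ --- that is, Simplicial Concordance (replacing the cylinder by $i_!X\times\A^1$ gives Algebraic Concordance). For the Euclidean geometry the same holds with ``degree $n$'' replaced by a fixed parity, once one notes that the $N(\M)/\M\cong\G_m$-grading on Euclidean field theories is precisely the even-degree decomposition of forms, so concordance stays within one parity. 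Now invoke the implications already proved in this section: Cohomologous $\Rightarrow$ Cochain $\Rightarrow$ Algebraic $\Rightarrow$ Simplicial Concordance, together with Simplicial Concordance $\Rightarrow$ Cohomologous (valid because $R$ is a $\Q$-algebra, via homotopy invariance and Sullivan's theorem). Hence all four relations coincide, and two degree-$n$ field theories are concordant iff the associated closed forms are cohomologous.

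Passing to quotients then gives
\[
\pTFT{}^n[X]\cong\TFT{}^n[X]\cong\Omega^n_{R,\text{cl}}(i_!X)\big/{\sim_{\text{cohom}}}\;=\;H^n\!\big(\Omega^*(i_!X)\big)\cong HR^n(X),
\]
the last isomorphism being Sullivan's theorem (Section~\ref{sec:forms}). For the Euclidean geometry, the differential has odd degree $1$ and hence interchanges even and odd forms, so quotienting the closed even forms by the exact ones computes $\bigoplus_{k\text{ even}}H^k(\Omega^*(i_!X))$, and dually for $n$ odd; therefore
\[
\EFT{}^n[X]\cong\bigoplus_{k\equiv n\,(2)}HR^k(X)=PHR^n(X),
\]
the sum being a product when $X$ is infinite-dimensional.

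The main obstacle is exactly the matching of the preceding paragraph: verifying that the ``physical'' concordance relation (a field theory over the cylinder) is transported faithfully by the dictionary of the first paragraph to one of the combinatorial relations of Section~\ref{sec:concordance} --- in particular, that restriction of field theories along $X\rightrightarrows X\times\Delta^1$ becomes the maps $f_0,f_1$ of the diagram there, and that the $\M$-structure (the $\Z$-grading in the topological case, the parity in the Euclidean case) is carried over without loss, so that one genuinely stays inside $\Omega^n_{\text{cl}}$ (resp.\ $\Omega^{\text{ev}}_{\text{cl}}$, $\Omega^{\text{odd}}_{\text{cl}}$) on the cylinder. Everything else is an assembly of Theorems~\ref{thm:field-theories} and~\ref{thm:twists}, the equivalence of the concordance notions, and Sullivan's theorem, with naturality free from functoriality in $X$.
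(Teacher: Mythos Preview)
Your proposal is correct and follows essentially the same route as the paper. In fact the paper gives no explicit proof for this theorem: it is stated as an immediate consequence of the identification of degree-$n$ field theories with closed forms (Theorems~\ref{thm:field-theories} and~\ref{thm:twists}), the equivalence of the four concordance notions established in the propositions just above, and Sullivan's theorem; you have simply spelled out these implicit steps. The one place you are more careful than the paper is in the ``matching'' paragraph, where you verify that field-theoretic concordance (a theory over the cylinder) transports to Simplicial Concordance under the forms dictionary --- the paper takes this for granted, since concordance of field theories is \emph{defined} via the cylinder and the identification with forms is natural in $X$.
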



\begin{remark}
Because periodic cohomology is defined using the \emph{product}, $X$ may be taken to be an infinite dimensional simplicial set.
\end{remark}

\section{A variation for cohomology over any ring} \label{sec:cohomology}

In this final section we will describe a variant of the above results which allows one to recover the  cohomology of a simplicial set \emph{over any ring} as concordance classes of degree $n$-field theories over that simplicial set.  

In the world of superalgebraic Cartesian sets (over $R$) the functions on the space $\A^{n|q}$ are the supercommutative ring $R[x_1,\ldots,x_n,\e_1,\ldots,\e_q]$. In short the functions on affine space are only those functions on $(n|q)$-variables obtainable from standard algebraic operations for rings. In the closely related context of supermanifolds the functions on $\R^{n|q}$ are given by $C^\infty(\R^n)[\e_1, \ldots, \e_q]$. This corresponds to functions on $(n|q)$-variables obtainable using $C^\infty$-operations (see \cite{lawvere1979categorical,moerdijk2013models} for discussions of the algebraic theory of $C^\infty$-rings). We will now describe an intermediate theory which includes not only standard algebraic operations for rings but also \emph{divided power operations}. 

Let $R$ be a ring and fix an ideal $I \subseteq R$. Recall that a \emph{divided powers structure on $(R, I)$} is a collection of maps $\gamma_n: I \to R$ for $n = 0, 1, 2, \ldots$ such that
\begin{enumerate}
	\item $\gamma_0 =1$ and $\gamma_1(x) = x$ for $x \in I$, while $\gamma_n(x) \in I$ for $n > 0$;
	\item $\gamma_n(x + y) = \sum_{i= 0}^n \gamma_{n-i}(x) \gamma_i(y)$ for $x, y \in I$;
	\item $\gamma_n(\lambda x) = \lambda^n \gamma_n(x)$ for $\lambda \in R$, $x \in I$;
	\item $\gamma_m(x) \gamma_n(x) = {m+n \choose n} \gamma_{m + n}(x)$ for $x \in I$;
	\item $\gamma_n(\gamma_m(x)) = \frac{(mn)!}{(m!)^n n!} \gamma_{mn}(x)$.
\end{enumerate}
A \emph{homomorphism} of rings with divided powers structure $(R, I) \to (R', I')$ is a ring homomorphism $R \to R'$ sending $I$ into $I'$ and commuting with the maps $\gamma_n$. 

Now fix a ring $S$. On a first reading the reader might wish to focus on the case $S = \Z$.
The base ring that our spaces will be defined over is not $S$ but the ring $R = \Gamma_S(t)$, the free divided powers algebra over $S$ on one variable $t$. As an $S$-module $R$ is free with basis $1$, $\gamma^r(t)$, $r\geq 1$ and has multiplication:
\begin{equation*}
	\gamma^r(t) \cdot \gamma^s(t) = {r+s \choose r} \gamma^{r+s}(t).
\end{equation*}
The element $\gamma^r(t)$ should be thought of as the formal expression `$\frac{t^r}{r!}$'. When $S = \Z$
this ring may be defined as the smallest subring of $\Q[t]$ containing $\Z$ and each of the monomials $\frac{t^n}{n!}$. Note that $R$ is flat over $S$. 

In our new theory of spaces over $R$, the functions on the affine spaces $\A^{n|q}$ are given by the divided powers algebra
\begin{equation*}
	\cO(\A^{n|q}) = \Gamma_S(t, x_1, \ldots, x_n)[\e_1, \ldots, \e_q]
\end{equation*}
that is we take the free divided powers algebra on the variables $t, x_1, \ldots, x_n$ and tensor with the exterior algebra on the variables $\e_1, \dots, \e_q$. This is naturally a divided powers algebra where $\gamma_n(\e_j) = 0$ for $n > 1$. The morphisms from $\A^{m|p}$ to $\A^{n|q}$ are defined to be the $R$-algebra homomorphisms
\begin{equation*}
	\cO(\A^{n|q}) \to \cO(\A^{m|p})
\end{equation*}
which are also homomorphisms of divided powers algebras. Using these $\A^{m|p}$ and morphism, the category of superalgebraic Cartesian sets (now with divided power operations) may be constructed just as before. 
In particular we still have
\begin{equation*}
	\A^{m|p} \times \A^{n|q} = \A^{m + n | p + q}.
\end{equation*}
One difference is that the functor $\cO(-)$ is no longer represented by the object $\A^{1|1}$. If we denote the ideal supporting the divided powers structure on $\cO(\A^{m|p})$ by
\begin{equation*}
	I\cO(\A^{m|p}) = \left( \gamma_n(t), \gamma_n(x_i), \e_j  \; | \; n \geq 1, 1 \leq i \leq m, 1 \leq j \leq p \right)
\end{equation*}
then we have:
\begin{align*}
	\sA( \A^{m|p}, \A^{1|0}) &\cong I\cO(\A^{m|p})_{\text{ev}} \\
	\sA( \A^{m|p}, \A^{0|1}) &\cong I\cO(\A^{m|p})_{\text{odd}}.
\end{align*}
In light of this, Lemma~\ref{lma:affine} must be replaced with:

\begin{lemma}
	We have natural isomorphisms $\underline{\sCart}(\A^{0|1}, \A^{1|0}) \cong \A^{1|1}$ and $\underline{\sCart}(\A^{0|1}, \A^{0|1}) \cong \cO$.
\end{lemma}

\begin{proof}
	We have
	\begin{align*}
	\sCart(\A^{m|p},\underline{\sCart}(\A^{0|1},\A^{1|0})) &\cong \sCart(\A^{m|p}\times \A^{0|1}, \A^{1|0}) \\ &\cong \Sect(\A^{m|p+1})_{\text{ev}} \\ &\cong I\Sect(\A^{m|p}) = \A^{1|1}(\A^{m|p}).
	\end{align*}
	and
	\begin{align*}
	\sCart(\A^{m|p},\underline{\sCart}(\A^{0|1},\A^{0|1})) &\cong \sCart(\A^{m|p}\times \A^{0|1}, \A^{0|1}) \\ &\cong I\Sect(\A^{m|p+1})_{\text{odd}} \\& \cong \Sect(\A^{m|p}). \qedhere
	\end{align*}
\end{proof}

\noindent The object $\A^{0|1}$ remains cartesian tiny, and so Cor.~\ref{cor:right-adjoint-to-innerhom}, which defines the functor $\Omega_{(-)}$, is still valid.

Similarly to before we also have an isomorphism of algebras
\begin{equation*}
	\cO(\A^{n|0}) \cong \Gamma_S(t, x_0, x_1, \ldots, x_n) / (t - \sum_i x_i)
\end{equation*}
which makes it clear that the assignment $[n] \mapsto \A^{n|0}$ defines a cosimplicial object. Using this cosimplicial object, every simplicial set gives rise to a  superalgebraic Cartesian set  with divided power operations. We have $\Omega(\A^{n|0}) = \Gamma_S(t, x_1, \ldots, x_n)[ \e_1, \cdots, \e_n]$, but we may think of this simplicially as the ring
\begin{equation*}
	\Omega(\A^{n|0}) \cong \Gamma_S(t, x_0 , x_1, \ldots, x_n)[ dx_0, \cdots, dx_n] / (t - \sum_i x_i, \sum_i dx_i).
\end{equation*}
This is similar to Sullivan's ring of polynomial differential forms on the $n$-simplex but also includes the divided powers $x_i^k / k!$. As before if $X$ is a simplicial set, then $\Omega(X)$ is a commutative dga which we will call the \emph{$S$-divided power differential forms on $X$}. We will let $\Omega_{\text{cl}}(X)$ be the closed forms, the kernel of the differential $d$. 

It is worth pointing out that $\Omega(\A^{0|q}) \ncong \cO(\A^{q|q})$, that is, Corollary 3.5 does not hold for arbitrary superalgebraic Cartesians sets. However, the formulas above imply that it does hold for superalgebraic Cartesians sets of the form $i_{!}X$, where $X$ is a simplicial set. 

The arguments from Sections~\ref{sec:SQFT2} and~\ref{sec:twists} carry over identically and yield the following theorem, which we only state for topological and Euclidean geometries for brevity. Analogous results hold for all geometries. 

\begin{thm}
	Let $X$ be a simplicial set. Then the set of supersymmetric $0|1$-dimensional degree $n$ $\M$-quantum field theories over $i_! X$ is naturally isomorphic (as supercommutative algebras with a coaction of $\Sect(N(\M)/\M)$) to...
	\begin{enumerate}
		\item $\M = \uAut(\A^{0|1}) \cong \A^{0|1} \rtimes \G_m$ (topological)
		\begin{equation*}
			\TFT{}^n(X) \cong \Omega^n_{\text{cl}}(X)
		\end{equation*}
		 closed, degree $n$ divided powers differential forms on $X$ over $R$.
		\item $\M = \A^{0|1} \rtimes \Z/2\Z$  (Euclidean)
		\begin{equation*}
			\EFT{}^n(X) \cong \begin{cases}
				\Omega^{\text{ev}}_{\text{cl}}(X) & n \text{ even} \\
				\Omega^{\text{odd}}_{\text{cl}}(X) & n \text{ odd}	
			\end{cases}
		\end{equation*}
		 closed divided powers differential forms on $X$ over $R$ of the specified parity.		
	\end{enumerate} \qed
\end{thm}

\noindent Unlike in the previous case, concordance classes of field theories \emph{are not} simply in bijection with cohomology classes in $H^*(X; \Gamma_S(t))$. Indeed it is not a priori clear that the four different notions of concordance agree in the current setup. What is clear are the implications
\begin{align*}
	\text{Cohomologous} &\Rightarrow \text{Cochain Concordance} \\
	&\Rightarrow \text{Algebraic Concordance} \\
	&\Rightarrow  \text{Simplicial Concordance.}
\end{align*} 
In fact, as a consequence of work of Cartan \cite[Section~7]{Cartan1976} and Miller \cite{miller1978derham}
all four notions of concordance \emph{do agree} and we may identify concordance classes of field theories with a specific subalgebra of $H^*(X; \Gamma_S(t))$. Our treatment will follow Cartan. 

As we have just observed, the $S$-divided power differential forms on $X$ correspond to supersymmetric $0|1$-dimensional quantum field theories over $X$, but they have another useful description.  For each $n$ we can consider the chain complex:  
\begin{equation*}
	\Omega^*(\Delta^n) = \Gamma_S(t, x_0 , x_1, \ldots, x_n)[ d x_0, \cdots, d x_n] / (t - \sum_i x_i, \sum_i \delta x_i).
\end{equation*}
Letting $n$ vary these assemble into a simplicial commutative dga $\Omega^*_\bullet$. Here $*$ denotes the differential grading while $\bullet$ denotes the simplicial degree. For a simplicial set $X$ the  $S$-divided power differential forms on $X$ are given by the complex of simplicial mapping spaces $\sSet(X_\bullet, \Omega^*_\bullet)$:
\begin{equation*}
	\sSet(X_\bullet, \Omega^0_\bullet) \stackrel{d}{\to}\sSet(X_\bullet, \Omega^1_\bullet) \stackrel{d}{\to} \cdots.
\end{equation*}
The cohomology of this complex is the cohomology of $\Omega^*(X)$. 

Following Cartan we may consider a generalization of this situation. Suppose that $A_\bullet^*$ is a simplicial dga (which we no-longer assume to be commutative). Then for any simplicial set $X$ we may form a dga $A^*(X)$ via the assignment
\begin{equation*}
	A^q(X) = \sSet(X_\bullet, A^q_\bullet).
\end{equation*} 
The cohomology $H^*(A^*(X))$ is defined to be the cohomology of this dga. Given such an $A_\bullet^*$, we let $ZA^q_\bullet$ denote the kernel of 
\begin{equation*}
	\delta: A^q_\bullet \to A^{q+1}_\bullet
\end{equation*}

Cartan considers simplicial dga's $A_\bullet^*$ which satisfy two key properties:
\begin{enumerate}
	\item[(A)] There is a long exact sequence
	\begin{equation*}
		0 \to ZA^0_\bullet \to A^0_\bullet \stackrel{\delta}{\to} A^1_\bullet \stackrel{\delta}{\to} A^2_\bullet \stackrel{\delta}{\to} \cdots
	\end{equation*}
	and $ZA^0_\bullet$ is a discrete simplicial set (i.e. it is constant); and
	\item[(B)] The simplicial homotopy group $\pi_p( A^q_\bullet) = 0$ is null if $p \neq q$ and we have a surjection
	\begin{equation*}
		\pi_p(ZA^p_\bullet) \twoheadrightarrow \pi_p(A^p_\bullet).
	\end{equation*}
\end{enumerate}
In this case we set $R = ZA^0_{[0]}$, the set of zero simplices. For each $q$ we have a short exact sequence
\begin{equation*}
	0 \to ZA^q_\bullet \to A^q_\bullet \stackrel{\delta}{\to} ZA^{q+1}_\bullet \to 0,
\end{equation*}
which is necessarily a fibration sequence of Kan complexes. The long exact sequence of homotopy groups then gives rise to a chain of inclusions
\begin{equation*}
	\cdots \hookrightarrow \pi_{q+1}(ZA^{q+1}_\bullet) \hookrightarrow \pi_{q}(ZA^{q}_\bullet) \hookrightarrow \cdots \hookrightarrow \pi_{0}(ZA^{0}_\bullet) = R. 
\end{equation*}
The image of $\pi_{q}(ZA^{q}_\bullet)$ defines the $q^\text{th}$ part of a filtration $F_qR \subseteq R$. We also observe that $ZA^q_\bullet$ is an Eilenberg-Mac Lane space (simplicial set) $K(F_qR, q)$. 

\begin{example} \label{ex:cochains}
	For any ring $R$ we have a simplicial dga $A^*_\bullet = C^*_{R, \bullet}$ given by the assignment
	\begin{equation*}
		C^q_{R,p} = C^q(\Delta^p; R)
	\end{equation*}
	of the degree $q$ simplicial cochains on $\Delta^p$ with coefficients in $R$. This satisfies both properties (A) and (B). In the case of (B) we have $\pi_n( C^n_{R, \bullet}) = 0$ so that $F_qR = R$ is the trivial filtration and $ZC^q_{R, \bullet}$ is a $K(R, q)$.
\end{example}

\begin{example} \label{ex:divdiffforms}
	Take $A^*_\bullet = \Omega^*_\bullet$, which corepresents $S$-divided power differential forms. \cite[Section~7]{Cartan1976} implies that this example satisfies properties (A) and (B). In this case $\pi_q(\Omega^q_\bullet) = S$ is non-zero and so we get a non-trivial filtration of $Z\Omega^0_{[0]} = R = \Gamma_S(t)$, the free divided powers $S$-algebra on the generator $t$. We have \cite[Section~7]{Cartan1976} that 
	\begin{equation*}
		F_q  \Gamma_S(t) =  \Gamma_S^{\geq q}(t)
	\end{equation*}
	consists of the divided powers of \emph{weight} at least $q$ (i.e. the $S$-submodule spanned by $\gamma^r(t)$ for $r \geq q$). 
\end{example}

\begin{thm}[Cartan-Miller \cite{Cartan1976,miller1978derham}]
	Let $A_\bullet^*$ be a simplicial dga satisfying properties (A) and (B) above and let $R = ZA^0_{[0]}$ and the filtration $F_qR \subseteq R$ be as above. Suppose that in addition $A_\bullet^*$ satisfies 
	\begin{enumerate}
		\item[(C)] $A^q_p = 0$ for $q > p$. 
	\end{enumerate}
	Then there is a unique \emph{integration map} of simplicial chain complexes 
	\begin{equation*}
		I: A_\bullet^* \to C^*_{R, \bullet}
	\end{equation*}
	which restricts to the identity $R = ZA^0_{[0]} \to ZC^0_{R,[0]} = R$. Moreover we have natural isomorphisms $H^q( A^*(X)) \cong [X_\bullet, ZA^q_\bullet] \cong H^q(X; F_qR)$ (where the middle term represents simplicial homotopy classes) and under this isomorphism $I$ agrees with the map induced by the inclusion $F_q(R) \subseteq R$. 
	
	Moreover, suppose that $R$ is a flat $k$-algebra (over some ring $k$) and that $ZA^q_{[p]}$ is $k$-flat for all $q, p$. Then the induced map 
	\begin{equation*}
		H(I): H^*(A^*(X)) \to H^*(X; R)
	\end{equation*}
	is multiplicative (it's a homomorphism of graded $k$-algebras). \qed
\end{thm}

Property (C) is satisfied by both Examples~\ref{ex:cochains} and~\ref{ex:divdiffforms}. The isomorphism $H^q( \Omega^*(X)) \cong [X_\bullet, Z\Omega^q_\bullet]$ shows that the simplical concordance agrees with the cohomologous relation, and hence all four notions of concordance of field theory agree. Because $\Gamma_S(t)$ is a free module over $S$, the flatness condition in the last part of the theorem is also satisfied in Example~\ref{ex:divdiffforms} if we take $k = S$. We have the following immediate corollary: 

\begin{cor}
	Let $X$ be a simplicial set. Then we have a natural isomorphism 
	\begin{equation*}
		\TFT{}^n[X] \cong H^n(X; \Gamma^{\geq n}_S(t))
	\end{equation*}
	between the set of concordance classes of supersymmetric $0|1$-dimensional degree $n$ topological field theories over $X$ and the cohomology of $X$ in the $\Gamma_S(t)$-module $\Gamma^{\geq n}_S(t)$. Moreover letting $n$-vary, this isomorphism is multiplicative, where the right-hand-side is viewed as a summand of the graded ring $H^*(X; \Gamma_S(t))$. \qed
\end{cor}

\noindent Since $\Gamma_S^{\geq n}(t)$ is a flat $S$-module we have that
\begin{equation*}
	 H^*(X; \Gamma^{\geq n}_S(t)) \cong H^*(X; S) \otimes_S \Gamma_S^{\geq n}(t)
\end{equation*}
is obtained by base-change. It follows that we can recover the additive $S$-cohomology of $X$ from the concordance classes of topological field theories $\TFT{}^*[X]$. For example we can identify $H^n(X; S)$ with the summand $H^n(X; \Gamma^{n}_S(t)) \subseteq H^*(X; \Gamma^{\geq n}_S(t)) \cong \TFT{}^n[X]$
corresponding to degree $n$-forms with coefficients of weight exactly $n$. Under this identification, however, the natural pairing
\begin{equation*}
	\TFT{}^m[X] \times \TFT{}^n[X] \to \TFT{}^{m+n}[X]
\end{equation*}
sends $\alpha \in H^m(X;S)$ and $\beta \in H^n(X;S)$ to 
\begin{equation*}
	{m + n \choose n} \cdot \alpha \cup \beta \in H^{m+ n}(X;S).
\end{equation*}
In particular the natural multiplication for TFTs only sees multiples of the cup product structure on $H^*(X; S)$.  We end with a final question.

\begin{question}
	Is there a natural quantum field theoretic construction that would allow one to recover the full structure of $H^*(X;S)$ as a graded ring from the concordance classes of topological field theories $\TFT{}^*[X]$?
\end{question}

\bibliographystyle{alpha}
\bibliography{mybib}
\end{document}